\newtheorem{theorem}{Theorem}[section]
\newtheorem{definition}{Definition}[section]
\newtheorem{Lem}{Lemma}[section]
\newtheorem{Ex}{Example}[section]
\newtheorem{Rem}{Remark}[section]
\title{Rejecting inadmissible rules in reduced normal forms in $S4$}
\author{Mojtaba Aghaei, Maryam Rostami Giv\\ Department of mathematics, Isfahan University of Technology, Isfahan, 84156-83111, Iran \\
 {\tt aghaei@cc.iut.ac.ir}\\{\tt m.rostamigiv@math.iut.ac.ir}}
\begin{document}
\maketitle
\begin{abstract}
Several methods for checking admissibility of rules in the modal logic $S4$ are presented in \cite{Sergey1}, \cite{18}. These methods determine  admissibility of  rules in $S4$, but they don't determine or give substitutions rejecting inadmissible rules. In this paper, we investigate some relations between one of the above methods, based on the reduced normal form rules, and sets of substitutions which reject them. We also generalize the method in \cite{Sergey1}, \cite{18} for one rule to admissibility of a set of rules.

Keywords: sequent calculus, admissible rule, modal logic, S4, substitution, complexity.

2010 Mathematics Subject Classification: 03B47, 03D15.
\end{abstract}


\section{Introduction}

\footnote{This research was in part supported by a grant from IPM (NO)}
Logical admissible rules were studied by Lorenzen \cite{12}, Harrop \cite{7}, Mints \cite{13}.The question wether algorithms exist for recognising admissibility of rules, posed by Friedman \cite{12}, is affirmatively solved by Rybakov \cite{15}, \cite{17} for the modal logic $S4$, for a broad range of propositional modal logics, for example $K4$ and $GL$ in \cite{18}, and by Roziere \cite{14} for IPC using methods of proof theory.

Algorithms deciding admissibility for some transitive modal logics and IPC,
based on projective formulae and unification, are described in Ghilardi \cite{3}, \cite{4},\cite{5},\cite{6}.
They combine resolution and tableau approaches for finding projective approximations of a formula and rely on the existence of an algorithm for theorem proving. Lemhoff and Metcalfe \cite{8} introduced a Gentzen-style system for analytic proof systems that derive admissible rules of non-classical logics.

A practically feasible realisation for S4 built on the algorithm for IPC in \cite{5} is described
in \cite{23}. These algorithms were specifically designed for finding general solutions for
matching and unification problems. In contrast, the original algorithm of \cite{15} can
be used to find only some solution of such problems in S4. In \cite{Sergey1} are presented some methods, specially a tableau method, for checking rule admissibility in $S4$. For more references for inadmissible rules in $S4$ see \cite{1}, \cite{11}, \cite{15}, \cite{18}.

In this paper, in section $2$, we present deduction systems for $S4$ and some results in this system useful for producing substitutions rejecting some inadmissible rules. In section $3$, using Kripke models based on $S4$-formulas in the normal form as their nodes, we provide necessary and sufficient conditions to the determine validity or admissibility of one or several rules (with the same substitution rejecting them) in the normal forms. A way to build the sets used in this conditions and the relations between them and validity or admissibility of rules is presented. In section $4$, the relations between sets of substitutions rejecting sets of rules, and based on them an algorithm to decompose them to their components, are presented. We conclude this section by applying the algorithm on an example. The different ways to decompose the sets make different branches of trees for which examples are presented and show  complexity of the problem of producing substitutions rejecting a set of rules.

\section{Deduction systems for $S4$}
To study substitutions rejecting admissibility of rules in $S4$, specially those reject admissibility of the rules $\cfrac{\diamond p}{p}$ and $\cfrac{\diamond p,p\leftrightarrow \Box p}{p}$, we introduce deduction systems for $S4$ and some useful results in $S4$. A Hilbert system for $S4$ is obtained by adding to the language, axiom schemas and rules for classical logic, the modal $\Box$ and the schemas:
\begin{flushleft}
$K : \Box(A \rightarrow B) \rightarrow (\Box A \rightarrow \Box B)$\\

$T : \Box A \rightarrow A $\\

$4: \Box A \rightarrow \Box\Box A $\\

and the necessitation rule, If $\vdash A$ then $\vdash \Box A$.\\
\end{flushleft}

$\diamond A$ is defined as $\neg\Box\neg A$.

The Gentzen sequent system $G1s$ for $S4$ is \cite{24}:

\vspace{1cm}

$\hspace*{1.1cm}Ax: \hspace*{1cm}  A\vdash A \hspace*{5.7cm} \perp L: \hspace*{1.2cm} \perp\vdash $

\vspace{1cm}

$~~~~~~~~\wedge L:\hspace*{1cm}\frac{A, B, \Gamma\vdash \Delta}{A\wedge B, \Gamma\vdash \Delta}\hspace*{+4.9cm}~~~~\wedge R:\hspace*{1cm}\frac{\Gamma\vdash A, \Delta~~~~~\Gamma\vdash B, \Delta}{\Gamma\vdash A \wedge B, \Delta} $

\vspace{1cm}

$ \hspace*{1.1cm}\vee L:\hspace*{1cm}\frac{A, \Gamma\vdash \Delta ~~~~~~~ B, \Gamma\vdash \Delta}{A \vee B, \Gamma\vdash \Delta}$\hspace*{+4.2cm}$\vee R:\hspace*{1cm}\frac{\Gamma\vdash A, B, \Delta}{\Gamma\vdash A \vee B, \Delta} $

\vspace{1cm}

$\hspace*{0.9cm}\rightarrow L:\hspace*{1cm}\frac{\Gamma\vdash A,\Delta~~~~~~~~~~B,\Gamma\vdash \Delta}{A\rightarrow B,\Gamma\vdash \Delta}\hspace*{+3.5cm}\rightarrow R:\hspace*{0.9cm}\frac{A,\Gamma\vdash B,\Delta}{\Gamma\vdash A\rightarrow B,\Delta}$

\vspace{1cm}

$ \hspace*{1.2cm}\neg L:\hspace*{1cm}\frac{\Gamma\vdash A, \Delta}{\neg A, \Gamma\vdash \Delta}\hspace*{+6cm}\neg R:\hspace*{0.9cm}\frac{A, \Gamma\vdash \Delta}{\Gamma\vdash\neg A,\Delta} $

\vspace{1cm}

$\hspace*{1.1cm}LW:\hspace*{1cm}\frac{\Gamma\vdash\Delta}{A,\Gamma\vdash\Delta}$\hspace*{6cm}$RW:\hspace*{0.9cm}\frac{\Gamma\vdash\Delta}{\Gamma\vdash\Delta,A}$

\vspace{1cm}
\begin{center}
$Cut:\hspace*{1cm}\frac{\Gamma\vdash A,\Delta~~~~~~~~~~A,\Gamma\vdash \Delta}{\Gamma\vdash \Delta} $ \vspace{1cm}
\end{center}
\vspace{1cm}

$\hspace*{1.5cm}\Box L:\hspace*{1cm}\frac{A,~,~\Box A~\Gamma\vdash \Delta}{\Box A,~\Gamma\vdash \Delta}\hspace*{+4.5cm}~~~~~\Box R:\hspace*{1cm}\frac{\Box \Gamma\vdash A,~\diamond\Delta}{\Box\Gamma,~\Gamma^{'}\vdash \Box A,~\diamond\Delta,~\Delta^{'}} $ \vspace{1cm}

\vspace{1cm}

$ \hspace*{1.7cm}\diamond L:\hspace*{1cm}\frac{A,~\Box\Gamma\vdash \diamond\Delta}{\diamond A,~\Box\Gamma,~\Gamma^{'}\vdash \diamond\Delta,~\Delta^{'}}\hspace*{+4.4cm}\diamond R:~~~~~~\frac{\Gamma\vdash A,~\diamond A,~\Delta}{\Gamma\vdash \diamond A,\Delta} $ \vspace{1cm}

In this paper, to produce substitutions rejecting admissibility of rules, we need formula $A$ with $\nvdash A$ and $\vdash \diamond A$, and/or $\vdash A\leftrightarrow \Box A$; See Table $1$ and Table $2$.
\begin{Lem}The following hold in $S4$:

$(1)\vdash\Box (A\rightarrow B)\rightarrow(\diamond A\rightarrow \diamond B)$

$(2)$ If $\vdash A\rightarrow B$ then $\vdash \Box A\rightarrow \Box B$

$(3)$ If $\vdash A\rightarrow B$ then $\vdash \diamond A\rightarrow \diamond B$

$(4)\vdash \Box\Box A\leftrightarrow \Box A$

$(5)\vdash\diamond\diamond A\leftrightarrow \diamond A$

$(6)\vdash\Box\diamond\Box\diamond A\leftrightarrow\Box\diamond A$

$(7)\vdash\diamond\Box\diamond\Box A\leftrightarrow\diamond\Box A$.

$(8)\vdash\Box A\rightarrow \circ A$ where $\circ$ is a sequence of the modals $\Box$ and $\diamond$.
\end{Lem}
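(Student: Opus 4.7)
The plan is to derive all eight items from the Hilbert axioms $K$, $T$, $4$, propositional tautologies, and the necessitation rule, using items already proved as lemmas for later items. I would treat (1)--(3) first (they give the standard derived rules of monotonicity and a dual K schema), then use these together with $T$ and $4$ to handle the reduction equivalences (4)--(7), and finally induct for (8).

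For (1), I would start from the propositional tautology $(A\to B)\to(\neg B\to\neg A)$, apply necessitation and $K$ to get $\Box(A\to B)\to(\Box\neg B\to\Box\neg A)$, and then contrapose the consequent: $\neg\Box\neg A\to\neg\Box\neg B$, which is $\Diamond A\to\Diamond B$ by the definition of $\Diamond$. For (2), necessitate the hypothesis $A\to B$ to get $\vdash\Box(A\to B)$, and combine with $K$ and modus ponens to get $\vdash\Box A\to\Box B$. Part (3) is the same argument but invoking (1) instead of $K$. For (4), one direction is axiom $4$ directly, while the other is $T$ with $\Box A$ substituted for $A$, giving $\Box\Box A\to\Box A$. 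Part (5) follows by duality: apply (4) to $\neg A$ and rewrite the resulting equivalence $\Box\Box\neg A\leftrightarrow\Box\neg A$ as $\Diamond\Diamond A\leftrightarrow\Diamond A$ after negating.

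For (6), the $\to$ direction uses $T$ in the form $\Box\Diamond A\to\Diamond A$; applying (3) gives $\Diamond\Box\Diamond A\to\Diamond\Diamond A$, then (5) yields $\Diamond\Box\Diamond A\to\Diamond A$, and (2) raises this to $\Box\Diamond\Box\Diamond A\to\Box\Diamond A$. The $\leftarrow$ direction goes the other way: the dual of $T$ gives $B\to\Diamond B$, so $\Box\Diamond A\to\Diamond\Box\Diamond A$; applying (2) gives $\Box\Box\Diamond A\to\Box\Diamond\Box\Diamond A$, and (4) supplies $\Box\Diamond A\to\Box\Box\Diamond A$. Part (7) is just (6) applied to $\neg A$ and rewritten with the dualities $\Box\neg\equiv\neg\Diamond$ and $\Diamond\neg\equiv\neg\Box$.

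For (8), I would induct on the length of the sequence $\circ$. The base case is $T$: $\Box A\to A$. For the inductive step, suppose $\vdash\Box A\to\circ' A$ and that $\circ=\bullet\circ'$ with $\bullet\in\{\Box,\Diamond\}$. If $\bullet=\Box$, apply (2) to the IH to get $\Box\Box A\to\Box\circ' A$, then combine with axiom $4$. If $\bullet=\Diamond$, combine the IH with the dual of $T$ (which says $C\to\Diamond C$) to obtain $\Box A\to\Diamond\circ' A$. The main (mild) obstacle is keeping the dualities of (5)--(7) straight; the arguments themselves are short chains of substitution, necessitation, and modus ponens, so I would not expect any genuine difficulty.
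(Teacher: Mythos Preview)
Your proof is correct in every part: the monotonicity rules (1)--(3) are derived in the standard way from $K$, necessitation, and contraposition; the reduction laws (4)--(7) follow from $T$, $4$, and duality exactly as you outline; and the induction for (8) is sound. The paper states this lemma without proof, treating all eight items as standard $S4$ facts, so there is nothing to compare against.
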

\begin{Lem}

The following are provable in $S4$:

$(1)\vdash(\Box A\rightarrow B)\rightarrow \diamond(A\rightarrow B)$

$(2)\vdash\diamond(A\rightarrow \diamond B)\rightarrow(\Box A\rightarrow\diamond B)$

$(3)\vdash(\diamond A\rightarrow \Box B)\leftrightarrow \Box(\diamond A\rightarrow \Box B)$

$(4)\vdash(\diamond A\rightarrow \Box B)\rightarrow \Box(A\rightarrow \Box B)$

$(5)\vdash (A\leftrightarrow\diamond A)\wedge(B\leftrightarrow\Box B)\rightarrow((A\rightarrow B)\leftrightarrow\Box (A\rightarrow B))$

$(6)\vdash(A\leftrightarrow \Box A)\wedge(B\leftrightarrow\Box B)\rightarrow((A\vee B)\leftrightarrow\Box(A\vee B))$

$(7)\vdash(\Box A\vee \Box B)\leftrightarrow \Box(\Box A\vee \Box B)$

$(8)\vdash(\Box A\vee \Box B)\rightarrow \Box(A\vee \Box B)$
\end{Lem}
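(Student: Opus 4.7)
The plan is to prove all eight items by semantic reasoning over the reflexive transitive Kripke frames for which $S4$ is sound and complete, using Lemma 2.1 to push $\Box$ through implications whenever a step is more naturally carried out by $\Box$-monotonicity. I would treat $(1)$, $(2)$, $(3)$, and $(7)$ as the core cases, and then derive $(4)$--$(6)$ and $(8)$ from them.

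For $(1)$, fix a world $x$ with $x \Vdash \Box A \to B$: if $x \Vdash B$, reflexivity lets $x$ itself witness $\diamond(A \to B)$; otherwise $x \nVdash \Box A$, producing a successor $y$ with $y \nVdash A$ and hence $y \Vdash A \to B$. For $(2)$, from $x \Vdash \diamond(A \to \diamond B)$ pick $y$ with $xRy$ and $y \Vdash A \to \diamond B$; the assumption $x \Vdash \Box A$ forces $y \Vdash A$, hence $y \Vdash \diamond B$, and transitivity transfers a $B$-witness above $y$ to one above $x$. For $(3)$, the backward direction is axiom $T$; for the forward direction, assume $x \Vdash \diamond A \to \Box B$ and a successor $y$ with $y \Vdash \diamond A$: one use of transitivity pulls the $A$-witness above $y$ back to a $\diamond A$-witness at $x$, giving $x \Vdash \Box B$, and a second use of transitivity passes $\Box B$ from $x$ down to $y$. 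For $(7)$, an entirely parallel double use of transitivity handles the case $x \Vdash \Box A$ (symmetrically $\Box B$), while the converse is again $T$.

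To derive the remaining items: $(4)$ combines the forward direction of $(3)$ with Lemma 2.1(2) applied to the propositional implication $(\diamond A \to \Box B) \to (A \to \Box B)$, which is justified by $A \to \diamond A$, the contrapositive of $T$. For $(5)$, assume at $x$ that $A \leftrightarrow \diamond A$, $B \leftrightarrow \Box B$, and $A \to B$: if $x \Vdash A$ then $x \Vdash B$ and the $B$-equivalence gives $x \Vdash \Box B$, so $A \to B$ holds throughout the successors; if $x \nVdash A$, then the $A$-equivalence gives $x \nVdash \diamond A$, so $A$ fails at every successor and $A \to B$ holds vacuously there. $(6)$ is simpler still: assuming $A \leftrightarrow \Box A$, $B \leftrightarrow \Box B$, and $A \vee B$ at $x$, whichever disjunct holds at $x$ is automatically boxed there, hence holds at every successor. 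Finally $(8)$ follows by composing $(7)$ with $\Box$-monotonicity applied to the propositional $\Box A \vee \Box B \to A \vee \Box B$, the latter using $T$ on the first disjunct; the missing converses in the equivalences $(5)$ and $(6)$ are immediate instances of $T$.

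The main subtlety is the forward direction of $(3)$ and the analogous step in $(7)$, where transitivity must be applied twice: once to pull a witness for a $\diamond$-formula at a successor back to $x$, and once to push a $\Box$-commitment at $x$ forward to that successor. This is where axiom $4$ of $S4$ is genuinely needed; once $(3)$ and $(7)$ are established, every other item reduces to propositional rearrangement plus $\Box$-monotonicity from Lemma 2.1.
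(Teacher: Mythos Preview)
Your proposal is correct. The paper takes a different route: it gives explicit Hilbert-style line derivations for items $(1)$--$(4)$ (proving the contrapositives of $(1)$ and $(2)$, and arguing $(3)$ directly from axioms $4$ and $K$ via Lemma~2.1(3)), and it does not supply proofs for $(5)$--$(8)$ at all. Your argument is semantic throughout, appealing to completeness of $S4$ over reflexive transitive frames, and it covers all eight items. For $(4)$ the two approaches coincide: both you and the paper reduce it to $(3)$ combined with $\Box$-monotonicity applied to $(\diamond A\to\Box B)\to(A\to\Box B)$. Your semantic treatment is considerably shorter and makes the role of transitivity in $(3)$ and $(7)$ transparent; the paper's syntactic derivations avoid the appeal to completeness but are correspondingly longer, and they simply omit the cases $(5)$--$(8)$ that you handle.
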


\begin{proof}

$(1)$ We prove $\vdash\Box\neg(A\rightarrow B)\rightarrow\neg(\Box A\rightarrow B) $

$ 1.\Box\neg(A\rightarrow B)\hspace*{4.5cm}$~~~ assumption

$2.\neg(A\rightarrow B)\rightarrow A$ \hspace{4.1cm} tautology

$3.\Box(\neg(A\rightarrow B))\rightarrow\Box A$ \hspace{3.2cm} by $2,2.1(2)$

$4.\Box A\hspace{6.7cm}1,3, $ MP

$5.\neg(A\rightarrow B)\rightarrow\neg B$ \hspace{3.9cm} tautology

$6.\Box\neg(A\rightarrow B)\rightarrow\Box\neg B$ \hspace*{3.4cm}$5,2.1(2)$

$7.\Box\neg B$\hspace*{6.4cm}$1,6$, MP

$8.\Box\neg B\rightarrow\neg B$\hspace*{5cm} by axiom $T$

$9.\neg B$\hspace*{6.6cm} by $9, 10$, MP

$10.\Box A\rightarrow(\neg B\rightarrow\neg(\Box A\rightarrow B))$ \hspace*{1.9cm}tautology

$11.\neg(\Box A\rightarrow B)$ \hspace*{4.8cm}$4, 9,10$, MP\\

$(2)$ We prove $ \vdash\neg(\Box A\rightarrow \diamond B)\rightarrow\Box\neg(A\rightarrow \diamond B) $

$1.\neg(\Box A\rightarrow \diamond B)$ \hspace*{3.9cm} assumption

$2.\neg(\square A\rightarrow \diamond B)\rightarrow\square A$ \hspace*{2.8cm}tautology

$3.\square A$\hspace*{6cm}by $1,2,MP $

$4.\neg(\square A\rightarrow \diamond B)\rightarrow\neg \diamond B$\hspace*{2.5cm}tautology

$5.\neg \diamond B$ \hspace*{5.5cm}by $1, 4, MP $

$6.\Box\neg B$ \hspace{5.6cm}by $5$

$7.\Box\Box\neg B$ \hspace*{5.3cm}by $6$ and by axiom $4$

$8.\Box\neg \diamond B$\hspace*{5.8cm}by $7 $

$9.A\rightarrow(\neg\diamond B \rightarrow \neg (A \rightarrow \diamond B))$ \hspace*{1.8cm} tautology

$10.\Box A \rightarrow \Box (\neg \diamond B \rightarrow \neg (A \rightarrow \diamond B)) $ \hspace*{1cm} by $9,2.1(2)$

$11.\Box(\neg \diamond B \rightarrow \neg(A \rightarrow \diamond B))$ \hspace*{2.4cm}by $10, 3, MP$

$12.\Box \neg \diamond B \rightarrow \square \neg (A \rightarrow \diamond B)$ \hspace*{2.3cm} by $11$ and axiom $K$

$13.\Box\neg(A\rightarrow \diamond B)$ \hspace*{4.5cm}by $8, 12, MP $\\

$(3)$ We get $\vdash \neg \Box (\diamond A\rightarrow \Box B)\rightarrow \neg (\diamond A\rightarrow\Box B)$

$1.\neg \Box (\diamond A\rightarrow \Box B)$\hspace{4.5cm} assumption

$2.\diamond\neg(\diamond A\rightarrow \Box B)$ \hspace{4.3cm} by $1$

$3.\neg(\diamond A\rightarrow \Box B)\rightarrow \diamond A$ \hspace{3.6cm} tautology

$4.\diamond\neg(\diamond A\rightarrow\Box B)\rightarrow\diamond\diamond A$ \hspace{2.8cm} by $3,2.1(3)$

$5.\diamond\diamond A$ \hspace{6.3cm} by $2$ and $4$ and MP

$6.\diamond\diamond A\rightarrow\diamond A$ \hspace{5.1cm} by axiom $4$

$7.\diamond A$ \hspace{6.5cm} by $5$ and $6$ and MP

$8.\neg(\diamond A\rightarrow \Box B)\rightarrow \neg\Box B$ \hspace{3.2cm} tautology

$9.\diamond\neg(\diamond A\rightarrow\Box B)\rightarrow\diamond\neg\Box B $ \hspace{2.6cm} by $8,2.1(3)$

$10.\diamond\neg\Box B $ \hspace{5.7cm} by $2$ and $9$ and MP

$11.\neg \Box \Box B$ \hspace{5.8cm} by $10$

$12.\neg \Box \Box B\rightarrow\neg\Box B $ \hspace{4.2cm} by axiom $4$

$13.\neg \Box B $ \hspace{6.1cm} by $11$ and $12$ and MP

$14.\diamond A\rightarrow(\neg \Box B \rightarrow\neg(\diamond A\rightarrow \Box B))$ \hspace{1.5cm}tautology

$15.\neg(\diamond A\rightarrow \Box B)$ \hspace{4.7cm} by $7,13,14$\\

$(4)$

$1.(A\rightarrow\diamond A)\rightarrow ((\diamond A \rightarrow \Box B)\rightarrow(A \rightarrow \Box B))$ \hspace{2cm} tautology

$2.\Box(A\rightarrow\diamond A)\rightarrow \Box((\diamond A \rightarrow \Box B)\rightarrow(A \rightarrow \Box B))$ \hspace{1.4cm} by $1,2.1(2)$

$3.A \rightarrow\diamond A$ \hspace{8.1cm} by axiom $4$

$4.\Box(A \rightarrow\diamond A)$ \hspace{7.5cm} by $3$ and necessitation rule

$5.\Box((\diamond A \rightarrow \Box B)\rightarrow(A \rightarrow \Box B))$ \hspace{4.3cm} by $2$ and $4$ and MP

$6.\Box(\diamond A \rightarrow \Box B)\rightarrow \Box(A \rightarrow \Box B)$ \hspace{4.3cm} by $5$ and axiom $K$

$7.(\diamond A\rightarrow \Box B)\leftrightarrow \Box(\diamond A\rightarrow \Box B)$ \hspace{4.4cm} by part $(3)$

$8.(\diamond A\rightarrow \Box B)\rightarrow \Box(A\rightarrow \Box B)$ \hspace{4.6cm} by $6$ and $7$
\end{proof}

For formula $A$ in $S4$, let the property $(*)$ be $\nvdash A$ and $\vdash \diamond A$ and the property $(**)$ be the property $(*)$ in addition $\vdash A\leftrightarrow \Box A$.

Let $\sigma(p)=A$. If $A$ has the property $(*)$ then $\sigma$ rejects the rule $r=\cfrac{\diamond p}{p}$ and if $A$ has the property $(**)$ then $\sigma$ rejects the rule $r=\cfrac{\diamond p, p\leftrightarrow \Box p}{p}$.

\begin{Lem}
Let $A$ and $B$ formulas in $S4$ with the property $(**)$ then $A\vee B$ has the property $(**)$.
\end{Lem}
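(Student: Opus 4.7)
My plan is to verify the three defining conjuncts of property $(**)$ for $A\vee B$ separately. Two of them are essentially immediate from lemmas already established: $\vdash\diamond(A\vee B)$ follows from the hypothesis $\vdash\diamond A$ by combining the tautology $A\rightarrow A\vee B$ with Lemma 2.1(3) to obtain $\vdash\diamond A\rightarrow\diamond(A\vee B)$ and then applying modus ponens; and $\vdash(A\vee B)\leftrightarrow\Box(A\vee B)$ is obtained by plugging $\vdash A\leftrightarrow\Box A$ and $\vdash B\leftrightarrow\Box B$ into Lemma 2.2(6) and discharging the antecedent. The substantive work lies in the non-derivability clause $\nvdash A\vee B$, since in general $\nvdash A$ together with $\nvdash B$ does not force $\nvdash A\vee B$ (e.g.\ $p\vee\neg p$).

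For this step I would argue semantically, relying on soundness and completeness of $S4$ with respect to finite reflexive transitive Kripke frames. From $\nvdash A$ and $\nvdash B$ choose finite $S4$-models $M_A=(W_A,\le_A,V_A)$ and $M_B=(W_B,\le_B,V_B)$ with worlds $w_A\in W_A$ and $w_B\in W_B$ refuting $A$ and $B$ respectively. I would then glue them along a fresh root $r$: take $M$ with domain $\{r\}\sqcup W_A\sqcup W_B$, accessibility equal to the reflexive-transitive closure of $\le_A\cup\le_B\cup\{(r,w):w\in W_A\cup W_B\}$, and valuation extending $V_A$ and $V_B$. The resulting frame is still reflexive and transitive, and because no old world sees the new root $r$, a standard generated-submodel argument gives $M,w_A\not\models A$ and $M,w_B\not\models B$.

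Soundness of $\vdash A\leftrightarrow\Box A$ and $\vdash B\leftrightarrow\Box B$ in $M$ at the new root then forces the disjunction to fail there: if $M,r\models A$ held we would also have $M,r\models\Box A$, contradicting $M,w_A\not\models A$ since $r\le w_A$; the symmetric argument rules out $M,r\models B$. Hence $M,r\not\models A\vee B$, and soundness delivers $\nvdash A\vee B$, completing the verification of property $(**)$ for $A\vee B$. The main obstacle, and the place where the full strength of $(**)$ rather than just $(*)$ is genuinely needed, is exactly this last step: the boxed-equivalence hypotheses are what propagate the local refutations $w_A\not\models A$ and $w_B\not\models B$ upward to the glued root and block $A\vee B$ from becoming derivable after the union.
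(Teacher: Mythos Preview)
Your proof is correct and follows essentially the same approach as the paper: both glue countermodels for $A$ and $B$ under a fresh root and use the boxed equivalences $\vdash A\leftrightarrow\Box A$, $\vdash B\leftrightarrow\Box B$ to force $A\vee B$ to fail at that root, while the two positive clauses are handled via Lemma~2.2(6) and an easy $\diamond$-argument. Your write-up is simply more explicit (naming the refuting worlds, invoking generated submodels, and treating both disjuncts), but there is no substantive difference in strategy.
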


\begin{proof}
Let $M_1\nvDash A, M_2\nvDash B $ and $\vdash A \vee B$. Let $M=(M_1+M_2)'$ obtained by adding a new root $0$ below $M_1$ and $M_2$. Then $M,0\vDash A\vee B$. Let $M,0\vDash A$, since $\vdash A\leftrightarrow \Box A$ then $M,0\vDash \Box A$ and then $M_1\vDash A$, contradiction: Then $\nvdash A \vee B$. By $\vdash\diamond A \vee \diamond B \leftrightarrow\diamond (A\vee B)$ we get $\vdash\diamond (A\vee B)$, and $\vdash A\vee B\leftrightarrow \Box(A\vee B)$ by Lemma $2.2(6)$.
\end{proof}

We can get formulas $A$ with $\vdash \diamond A$ as follows. By Lemma $2.1(8)$, Lemma $2.2(1)$ and $2.2(4)$, and replacing $p$ with $\Box p$ or $\diamond p$ and simplifying modals by Lemma $2.1$, we get formulas $A$ with $\vdash \diamond A$ as in Lemma $2.4$. By some replacements we get $\vdash A$ and we remove them from the list. Note that if $\vdash\diamond A$ then $\vdash \diamond (A\vee B)$ and $\vdash \diamond (B\rightarrow A)$ and if $B$ is obtained from $A$ by replacement then $\vdash \diamond B$.

\begin{Lem}\label{2001}$\vdash \diamond A$ for the following formulas $A$ in $S4$

$(1)(p \rightarrow \Box p)$\hspace{1.6cm}$(7)\diamond p \rightarrow \diamond\Box\diamond p$\hspace{1.7cm}$(13)\Box(\diamond p\rightarrow \Box \diamond p)$\\

$(2)(p \rightarrow \Box\diamond p)$\hspace{1.2cm}$(8)\diamond \Box p \rightarrow \Box \diamond \Box p$\hspace{1.3cm}$(14)\Box(\diamond p\rightarrow \diamond\Box \diamond p)$\\

$(3)(p \rightarrow \diamond\Box p)$\hspace{1.4cm}$(9)\diamond\Box\diamond p\rightarrow\Box\diamond p$\hspace{1.2cm}$(15)\Box(\diamond \Box p\rightarrow \Box \diamond \Box p)$\\

$(4)(p \rightarrow \Box\diamond\Box p)$\hspace{.9cm}$(10)\Box(p \rightarrow \Box\diamond p)$\hspace{1.5cm}$(16)\Box(\diamond \Box \diamond p\rightarrow \Box \diamond p)$\\

$(5)(p \rightarrow \diamond\Box\diamond p)$\hspace{1cm}$(11)\Box(p \rightarrow \diamond\Box\diamond p)$\hspace{1.4cm}$(17)\Box \diamond (p \rightarrow \Box \diamond \Box p)$\\

$(6)\diamond p \rightarrow \Box \diamond p$\hspace{1.1cm}$(12)\Box(\Box p\rightarrow \Box \diamond \Box p)$\hspace{1.1cm}$(18)\Box \diamond (\diamond p \rightarrow \Box \diamond p)$\\
\end{Lem}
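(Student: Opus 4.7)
The plan is to verify the eighteen formulas case-by-case using the tools the paper highlights: Lemma $2.1(8)$, Lemma $2.2(1), (3), (4)$, the necessitation rule, the substitutions $p\mapsto\Box p$ or $p\mapsto\diamond p$, and the simplifications of iterated modals from Lemma $2.1(4)$--$(7)$. For $(1)$--$(5)$, I would apply Lemma $2.1(8)$ to get $\vdash\Box p\rightarrow\circ p$ for each modal sequence $\circ\in\{\Box,\ \Box\diamond,\ \diamond\Box,\ \Box\diamond\Box,\ \diamond\Box\diamond\}$, and then Lemma $2.2(1)$ with $A=p$ and $B=\circ p$ to conclude $\vdash\diamond(p\rightarrow\circ p)$. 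For $(6)$--$(8)$, I would substitute $p\mapsto\diamond p$ in $(1)$ or $(3)$, and $p\mapsto\Box p$ in $(6)$, then normalise iterated modals via Lemma $2.1(4)$--$(5)$; since substitution commutes with $\diamond$, the form $\vdash\diamond A$ is preserved. For $(9)$, Lemma $2.1(6)$ provides $\vdash\Box(\diamond\Box\diamond p)\rightarrow\Box\diamond p$, and Lemma $2.2(1)$ then yields $\vdash\diamond(\diamond\Box\diamond p\rightarrow\Box\diamond p)$.

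For the $\Box$-prefixed formulas $(10)$ and $(12)$, the plan is to invoke Lemma $2.2(4)$ with an appropriate $A,B$ and combine with Lemma $2.1(3)$ and $(6)$ or $(8)$ to reach $\vdash\diamond\Box(\cdots)$. For $(13), (15), (16)$, where the inner implication already has the shape $\diamond X\rightarrow\Box Y$, Lemma $2.2(3)$ converts $\vdash\diamond(\diamond X\rightarrow\Box Y)$ from $(6), (8), (9)$ directly into $\vdash\diamond\Box(\diamond X\rightarrow\Box Y)$. For the awkward pair $(11), (14)$, whose right-hand side $\diamond\Box\diamond p$ is not of the form $\Box B$ and so falls outside Lemma $2.2(4)$, I would use the instance $\vdash\Box\diamond p\rightarrow\diamond\Box\diamond p$ of $\vdash X\rightarrow\diamond X$ together with propositional monotonicity to obtain $\vdash(p\rightarrow\Box\diamond p)\rightarrow(p\rightarrow\diamond\Box\diamond p)$, and then Lemma $2.1(2)$--$(3)$ lift this to $\vdash\diamond\Box(p\rightarrow\Box\diamond p)\rightarrow\diamond\Box(p\rightarrow\diamond\Box\diamond p)$, with $(10)$ (respectively $(13)$ for $(14)$) supplying the antecedent. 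Finally, $(17)$ and $(18)$ follow by applying the necessitation rule to $(4)$ and $(6)$ to obtain $\vdash\Box\diamond(\cdots)$, and then $\vdash\diamond\Box\diamond(\cdots)$ by Lemma $2.1(8)$.

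The main obstacle is bookkeeping across the eighteen cases and locating the right combination of lemmas and substitutions for each. The trickiest cases are $(11)$ and $(14)$, where the right-hand side $\diamond\Box\diamond p$ forces one to step outside the scope of Lemma $2.2(4)$; the auxiliary fact $\vdash\Box\diamond p\rightarrow\diamond\Box\diamond p$ is what bridges these two cases back to $(10)$ and $(13)$.
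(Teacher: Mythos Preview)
Your plan is correct and uses the same toolkit as the paper --- Lemma~2.1(8), Lemma~2.2(1),(3),(4), substitutions $p\mapsto\Box p$ or $p\mapsto\diamond p$, and modal simplifications --- but distributes the tools differently across several cases. The paper obtains $(9)$ by substituting $p\mapsto\diamond p$ in $(8)$ and collapsing $\Box\diamond\Box\diamond p$ via Lemma~2.1(6); it gets $(13)$--$(16)$ by substitution from $(10)$--$(13)$ and $(15)$; and it derives $(17)$ from $(12)$ via Lemma~2.2(1), then $(18)$ from $(17)$ by substitution. You instead reach $(9)$ directly from Lemma~2.1(6) and Lemma~2.2(1), invoke Lemma~2.2(3) for $(13),(15),(16)$, and use necessitation plus $X\rightarrow\diamond X$ for $(17),(18)$. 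Both routes are valid; the paper's substitution chain is slightly more economical once earlier items are in hand, while yours is more self-contained per item. Your explicit treatment of $(11)$ and $(14)$ via the auxiliary implication $\vdash\Box\diamond p\rightarrow\diamond\Box\diamond p$ is in fact more careful than the paper's terse ``$(10)$--$(12)$ from $(6)$--$(8)$'': Lemma~2.2(4) does not apply directly to $(7)$ because the consequent $\diamond\Box\diamond p$ is not of the form $\Box B$, and your bridge back to $(10)$ (respectively $(13)$) fills exactly this gap.
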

\begin{proof}
$(1)-(5)$ by Lemma $2.1(8)$ and $2.2(1)$, $(6)$ and $(7)$ from $(1)-(5)$, $(8)$ from $(6)$, $(9)$ from $(8)$ by replacement, $(10)-(12)$ from $(6)-(8)$ by Lemma $2.2(1)$, $(13)-(14)$ from $(10)-(12), ~ (15)$ from$(13),~ (16)$ form $(15)$ and $(18)$ from $(17)$ by replacement, and  $(17)$ from $(12)$ by Lemma $2.2(1)$.
\end{proof}

\begin{Rem}
By Lemma $2.2$, except for $(1)-(5)$ and $(7)$ in Lemma $2.4$, we get $\vdash A\leftrightarrow \Box A$. In the lemma note that $(12)$ and then $(17)$ and $(18)$ are theorems in $S4$. The other parts of the lemma are not theorems. Consider $S4$-models $M_1$ and $M_2$. $(1)-(7), (10),(11),(13),(14)$ are not valid in $M_1$; $(8),(9),(15),(16)$ are not valid in $M_2$.

\begin{figure}[ht]
\centering
\includegraphics [scale=.5]{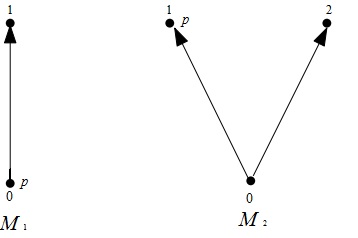}
\caption{\label{fig:math}}
\end{figure}
\end{Rem}

\begin{Ex} The proofs of $\vdash \diamond A$ in Lemma $2.4$ in the sequent calculus are interesting. For example we prove some cases.
\begin{center}

$\cfrac{~~~\cfrac{~~\cfrac{~~~~~\cfrac{~~~~\cfrac{~~~\cfrac{~~p\vdash p}{p\vdash p,\Box p}RW}{\vdash p, p\rightarrow\Box p}\rightarrow R}{\vdash p,\diamond(p\rightarrow\Box p)}\diamond R,RW}{p\vdash\Box p,\diamond(p\rightarrow\Box p)}\Box R}{\vdash p\rightarrow\Box p,\diamond(p\rightarrow\Box p)}\rightarrow R}{\vdash \diamond(p\rightarrow\Box p)}\diamond R$

\vspace{1cm}

$\cfrac{~~\cfrac{~~\cfrac{~~\cfrac{~~~\cfrac{~~~~\cfrac{~~~\cfrac{~~~\cfrac{~~~\cfrac{~~p\vdash p}{p\vdash \diamond p}\diamond R,RW}{p\vdash \Box\diamond p,\diamond p}LW}{\vdash\diamond p,p\rightarrow\Box\diamond p}\rightarrow R}{\vdash\diamond p,\Box(p\rightarrow\Box\diamond p),\diamond\Box(p\rightarrow\Box\diamond p)}\Box R,RW}{\vdash\diamond p,\diamond\Box(p\rightarrow\Box\diamond p)}\diamond R}{p\vdash \Box\diamond p,\diamond\Box(p\rightarrow\Box\diamond p)}\Box R}{\vdash p\rightarrow\Box\diamond p,\diamond\Box(p\rightarrow\Box\diamond p)}\rightarrow R}{\vdash\Box(p\rightarrow\Box\diamond p),\diamond\Box(p\rightarrow\Box\diamond p)}\Box R}{\vdash\diamond\Box(p\rightarrow\Box\diamond p)}\diamond R$

\vspace{1cm}

$\cfrac{\cfrac{\cfrac{\cfrac{\cfrac{\cfrac{\cfrac{\cfrac{\cfrac{\cfrac{\cfrac{\cfrac{~~~p\vdash p~~~~}{p\vdash p,\Box\diamond\Box p}RW}{\vdash p, p\rightarrow\Box\diamond\Box p}\rightarrow R}{\vdash p,\diamond(p\rightarrow\Box\diamond\Box p)}\diamond R, RW}{\vdash\Box p, \diamond(p\rightarrow\Box\diamond\Box p)}\Box R}{\vdash\diamond\Box p, \diamond(p\rightarrow\Box\diamond\Box p)}\diamond R}{\vdash\diamond\Box p, \Box\diamond(p\rightarrow\Box\diamond\Box p)}\Box R}{\vdash\diamond\Box p, \diamond\Box\diamond(p\rightarrow\Box\diamond\Box p)}\diamond R,RW}{p\vdash \Box\diamond\Box p, \diamond\Box\diamond(p\rightarrow\Box\diamond\Box p)}\Box R,LW}{\vdash(p\rightarrow\Box\diamond\Box p), \diamond\Box\diamond(p\rightarrow\Box\diamond\Box p)}\rightarrow R}{\vdash \diamond(p\rightarrow\Box\diamond\Box p), \diamond\Box\diamond(p\rightarrow\Box\diamond\Box p)}\diamond R,RW}{\vdash \Box\diamond(p\rightarrow\Box\diamond\Box p), \diamond\Box\diamond(p\rightarrow\Box\diamond\Box p)}\Box R}{\vdash \diamond\Box\diamond(p\rightarrow\Box\diamond\Box p)}\diamond R$
\end{center}

\end{Ex}

\section{Kripke models and reduce normal forms}
A Kripke frame is a pair $(W,R)$ where $W$ is a non-empty set of words and $R$ is a relation on $W$, i.e. $R \subseteq W × W$. A Kripke model is a triple $(W,R,V)$, where $(W,R)$ is a frame and $V$ is a valuation that assigns sets of worlds to propositional variables, i.e. $V : P\rightarrow P(W)$, where $P$ is the set of propositional variables. $V (p)$ is interpreted as the set of worlds where $p$ is true. An $S4$-model is a Kripke model $(W,R,V)$ in which $R$ is reflexive and transitive.

$M, w\models \psi$ is defined as usual for an $S4$-formula $\psi$, an $S4$-model $M$ and a $w$ in $M$. $\psi$ is valid in a model $M$, denoted $M\models \psi$, if $M, w\models \psi$ for all worlds $w$ in $M$.\\

A rule of inference is as the following: $r=\frac{\alpha_1,...,\alpha_n}{\beta}$ that $\alpha_1,...,\alpha_n$ and $\beta$ are $S4$-formulae. $r$ is valid in an $S4$-model $M$, if from $M\models \alpha_1,...,M\models \alpha_n$ follows $M\models \beta$. The rule $r$ is valid if it is valid in each $S4$-model $M$.

For a rule of inference $r=\frac{\alpha_1,...,\alpha_n}{\beta}$, a logic $\lambda$ and substitution $\sigma$ we use $\lambda\vdash_{\sigma}r$ if from $\lambda\vdash \sigma(\alpha_1),...,\lambda\vdash \sigma(\alpha_n)$ it follows that $\lambda\vdash \sigma(\beta)$. If $\lambda\nvdash_{\sigma}r $ we say $\sigma$ rejects $r$ in $\lambda$. $r$ is admissible in $\lambda$ if $\lambda\vdash_{\sigma}r$ for every substitution $\sigma$.

\begin{definition}\label{201}
A  Kripke  model  $K_n$  is  called $n$-characterizing  for  a  modal logic $\lambda$ (any  normal modal logic, not necessarily an extension of the  system $K4$) if the domain of the valuation $V$ from $K_n$ is the set $P$ which consists of $n$ different propositional variables, and if the following holds. For any formula $\alpha$ which is build up of variables  from $P$
\begin{center}
$\alpha\in \lambda\Leftrightarrow K_n\models\alpha$
\end{center}

We use $Ch_{\lambda}(n)$ for n-characterizing Kripke models $K_n$.
\end{definition}

\begin{Lem}\label{202}
Let $\mathcal{M}=(M, V)$ be a $\lambda$-model.\\

$(1)$ If $\sigma$ is a substitution and $S(x_i)=V(\sigma(x_i))$, then $S$ is a definable valuation for which $S(\alpha)=V(\sigma(\alpha))$ for each $\lambda$ formula $\alpha$, that is $(M,S),w\vDash \alpha$ iff $(M,V),w\vDash \sigma(\alpha)$ for each $w\in M$.\\

$(2)$ If $S$ is a definable valuation in $M$, that is for each variable $x_i$ there is a formula $\phi_i$ such that $(M,S),w\vDash x_i$ iff $(M,V),w\vDash \phi_i$, and if $\sigma(x_i)=\phi_i$ is a substitution then $S(\alpha)=V(\sigma(\alpha))$ for each $\lambda$-formula $\alpha$, that is $(M,S),w\vDash \alpha$ iff $(M,V),w\vDash \sigma(\alpha)$ for each $w\in M$.\\

$(3)$ If $\sigma$ is a substitution, $S$ is a definable valuation for which $S(\alpha)=V(\sigma(\alpha))$ for each $\lambda$ formula $\alpha$, $r:=\frac{\alpha_1,...,\alpha_m}{\beta}$ is a rule and $\sigma(r):=\frac{\sigma(\alpha_1),...,\sigma(\alpha_m)}{\sigma(\beta)}$ then $r$ is valid in $(M,S)$ iff $\sigma(r)$ is valid in $(M,V)$.
\end{Lem}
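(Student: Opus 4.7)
The plan is to prove all three parts by a straightforward induction on formula complexity, with part (3) being an immediate consequence of (1) or (2). I will treat (1) and (2) in parallel since they are essentially the same induction: in (1) the atomic equivalence $(M,S),w\vDash x_i \Leftrightarrow (M,V),w\vDash \sigma(x_i)$ holds by the very definition $S(x_i)=V(\sigma(x_i))$, while in (2) it holds by assumption together with $\sigma(x_i)=\phi_i$. So the real work is one induction, carried out once.

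For the inductive step I would handle the propositional connectives first. The induction hypothesis is that for formulas $\gamma$ of smaller complexity, $(M,S),w\vDash\gamma$ iff $(M,V),w\vDash\sigma(\gamma)$. Since substitution commutes with connectives, $\sigma(\gamma_1\wedge\gamma_2)=\sigma(\gamma_1)\wedge\sigma(\gamma_2)$ etc., each Boolean case reduces to the corresponding clause of the Tarskian satisfaction definition. The $\perp$ case and negation are handled the same way. This part is entirely mechanical.

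The modal case is the only one worth checking carefully. For $\alpha=\Box\gamma$, by the Kripke semantics
\[
(M,S),w\vDash \Box\gamma \iff \forall v\,(wRv \Rightarrow (M,S),v\vDash\gamma).
\]
Apply the induction hypothesis at each successor $v$ to replace the right-hand side by $\forall v\,(wRv\Rightarrow (M,V),v\vDash\sigma(\gamma))$, which is $(M,V),w\vDash\Box\sigma(\gamma)=(M,V),w\vDash\sigma(\Box\gamma)$. The case of $\diamond$ is dual, using $\sigma(\diamond\gamma)=\diamond\sigma(\gamma)$. Along the way, in (1), one also verifies that $S$ really is a valuation, which is immediate since $V(\sigma(x_i))\subseteq M$. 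For (2) the same proof goes through, with the base step now justified by the definability assumption.

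For part (3), observe that $r$ is valid in $(M,S)$ iff whenever $(M,S)\vDash \alpha_i$ for all $i$, also $(M,S)\vDash\beta$. By part (1) (or (2)) applied at every world $w\in M$, $(M,S)\vDash\alpha_i$ iff $(M,V)\vDash\sigma(\alpha_i)$, and likewise for $\beta$. Substituting these equivalences into the validity condition gives exactly the definition of $\sigma(r)$ being valid in $(M,V)$. I do not expect any genuine obstacle; the only subtlety is to keep the quantification over worlds straight when translating "valid in the model" (which is $\forall w$) through the pointwise equivalence supplied by (1).
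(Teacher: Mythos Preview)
Your proposal is correct and matches the paper's own proof, which simply says that (1) and (2) follow by an easy induction on $\alpha$ and (3) follows from (1) and (2). Your write-up just spells out that induction and the derivation of (3) in the obvious way.
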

\begin{proof}
$(1)$ and $(2)$ by easy induction on $\alpha$ and $(3)$ by $(1)$ and $(2)$.\\
\end{proof}
\begin{theorem}\label{203}
Let $(K_n,V_n)$, $n\in N$, be a sequence of $n$-characterizing models for a modal logic $\lambda$. Inference rules $r_1:=\frac{\alpha_{11},...,\alpha_{1m_1}}{\beta_1},...,r_k:=\frac{\alpha_{k1},...,\alpha_{km_k}}{\beta_k}$ are  inadmissible in $\lambda$ with the same substitution $\sigma$ iff $r_1,...,r_k$ are invalid in $(K_n,S)$ for some $n\in N$ and some definable valuation $S$ of variables from $r_1,...,r_k$ in $K_n$ (that is,  If $S(\alpha_{ij})=K_n$ and $S(\beta_i)\neq K_n$ for $i=1,...,k$ and $j=1,...,m_i$).\\
\end{theorem}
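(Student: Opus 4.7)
The plan is to treat the two directions symmetrically, pivoting on Lemma~3.2 to translate between a substitution $\sigma$ and a definable valuation $S$, and on Definition~3.1 to translate between $\lambda$-derivability of a formula and its truth in the characterizing model $K_n$. Throughout, I will choose $n$ large enough to contain all variables that actually need to be evaluated in $K_n$.

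For the forward direction, assume $r_1,\ldots,r_k$ are all rejected by the single substitution $\sigma$, i.e.\ $\lambda\vdash \sigma(\alpha_{ij})$ for every $i,j$ while $\lambda\nvdash \sigma(\beta_i)$ for every $i$. I would first pick $n$ to be the number of distinct propositional variables appearing in the formulas $\sigma(\alpha_{ij}),\sigma(\beta_i)$, and work in $(K_n,V_n)$. Because $(K_n,V_n)$ is $n$-characterizing, the hypothesis $\lambda\vdash \sigma(\alpha_{ij})$ gives $(K_n,V_n)\models\sigma(\alpha_{ij})$ and the hypothesis $\lambda\nvdash\sigma(\beta_i)$ gives $(K_n,V_n)\not\models\sigma(\beta_i)$. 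Now define, for every variable $x_\ell$ occurring in some $r_i$, the valuation $S(x_\ell):=V_n(\sigma(x_\ell))$. By Lemma~3.2(1), $S$ is definable and $(K_n,S),w\models\varphi$ iff $(K_n,V_n),w\models\sigma(\varphi)$ for every formula $\varphi$. Applied to $\alpha_{ij}$ and $\beta_i$ this yields $S(\alpha_{ij})=K_n$ and $S(\beta_i)\neq K_n$, so each $r_i$ is invalid in $(K_n,S)$ as required.

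For the reverse direction, assume there exists $n$ and a definable valuation $S$ of the variables from $r_1,\ldots,r_k$ such that $S(\alpha_{ij})=K_n$ and $S(\beta_i)\neq K_n$ for all $i,j$. Definability supplies, for each rule-variable $x_\ell$, a formula $\phi_\ell$ in the $n$ variables of $K_n$ with $(K_n,S),w\models x_\ell$ iff $(K_n,V_n),w\models\phi_\ell$. Set $\sigma(x_\ell):=\phi_\ell$. By Lemma~3.2(2), $(K_n,S),w\models\varphi$ iff $(K_n,V_n),w\models\sigma(\varphi)$ for every $\varphi$, so $(K_n,V_n)\models\sigma(\alpha_{ij})$ and $(K_n,V_n)\not\models\sigma(\beta_i)$. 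The characterizing property of $(K_n,V_n)$ — applied to formulas built from its $n$ propositional variables, which is exactly the syntactic form of the $\sigma$-images — then converts these semantic statements into $\lambda\vdash\sigma(\alpha_{ij})$ and $\lambda\nvdash\sigma(\beta_i)$, showing that the single substitution $\sigma$ rejects each $r_i$.

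The proof is essentially a bookkeeping exercise on top of Lemma~3.2, and the only point that requires care — and so the main obstacle — is making sure that in each direction the characterizing property is used for formulas written in the correct variable set. In the forward direction $n$ must be chosen to cover all variables of $\sigma(\alpha_{ij})$ and $\sigma(\beta_i)$, so that $\sigma(\alpha_{ij})$ and $\sigma(\beta_i)$ really are formulas over the alphabet of $K_n$; in the reverse direction the substitution $\sigma$ must be built from formulas $\phi_\ell$ already lying in that same alphabet, so that the conclusion $\lambda\vdash\sigma(\alpha_{ij})\Leftrightarrow K_n\models\sigma(\alpha_{ij})$ is actually licensed by Definition~3.1. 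Once this is tracked cleanly, (3) of Lemma~3.2 supplies the equivalence of $r_i$ being valid in $(K_n,S)$ and $\sigma(r_i)$ being valid in $(K_n,V_n)$, closing the argument.
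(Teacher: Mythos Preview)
Your argument is correct and follows essentially the same route as the paper's proof: in each direction you pass between $\sigma$ and a definable valuation $S$ via the lemma on $S(x_i)=V_n(\sigma(x_i))$, and between $\lambda$-derivability and truth in $K_n$ via the characterizing property, with $n$ chosen as the number of variables in the $\sigma$-images. The only slip is the reference: the substitution/definable-valuation lemma you invoke is Lemma~3.1 in the paper's numbering, not Lemma~3.2.
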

\begin{proof}
$(\Leftarrow)$ If $r_1,...,r_k$ are not admissible in $\lambda$ with the same substitution $\sigma$ then for $i=1,...,k$ and $j=1,...,m_i$, we get $\displaystyle\vdash_{\lambda}\sigma(\alpha_{ij})$ and $\displaystyle\nvdash_{\lambda}\sigma(\beta_i)$ then $(K_n,V_n)\displaystyle\vDash_{\lambda}\sigma(\alpha_{ij})$ and  $(K_n,V_n)\displaystyle\nvDash_{\lambda}\sigma(\beta_i)$ for $n$ the number of variables in $\sigma(r_1),...,\sigma(r_k)$, thus $\sigma(r_1),...,\sigma(r_k)$ is invalid in $(K_n,V_n)$, so $r_1,...,r_k$ are invalid in $(K_n,S)$ with the definable valuation $S(x_i)=V_n(\sigma(x_i))$ for each variable $x_i$ free in $r$ by Lemma 3.1.\\
$(\Rightarrow)$ If $r_1,...,r_k$ is invalid in $(K_n,S)$ with the definable valuation $S$ for each variable $x_i$ free in $r_1,...,r_k$ then  by Lemma $3.1$, there is a substitution $\sigma$ for each free variable $x_i$ in $r_1,...,r_k$ such that $S(x_i)=V_n(\sigma(x_i))$ and $\sigma(r_1),...,\sigma(r_k)$ are invalid in $(K_n, V_n)$, that is, for $i=1,...,k$ and $j=1,...,m_i$ we get $(K_n,V_n)\displaystyle\vDash_{\lambda}\sigma(\alpha_{ij})$ and  $(K_n,V_n)\displaystyle\nvDash_{\lambda}\sigma(\beta_i)$, then $\displaystyle\vdash_{\lambda}\sigma(\alpha_{ij})$ and  $\displaystyle\nvdash_{\lambda}\sigma(\beta_i)$, therefore $r_1,...,r_k$ are not admissible in $\lambda$.\\
\end{proof}

\begin{definition}\label{204}A rule $r$ is said to be in reduced normal form if it has the form
\begin{center}
$(rnf)~~~~~~~~~~~~~r=\frac{\displaystyle\bigvee_{j\in I}\phi_j}{\displaystyle\bigvee_{j\in J}\phi_j}$ ~~~~~or ~~~~~$r=\frac{\displaystyle\bigvee_{j\in I}\phi_j}{\displaystyle p_i}$
 \end{center}
and each disjunct $\phi_j$ has the form
\begin{center}
$\displaystyle\bigwedge_{0\leq i\leq n} p_i^{t(i,j,0)}\wedge\displaystyle\bigwedge _{0\leq i\leq n} \diamond p_i^{t(i,j,1)} $,
\end{center}
where ($i$) all $\phi_j$  are different $(ii)~p_0,...,p_n$ denote propositional variables, $(iii)$ $t$ is a Boolean function $t:\{0,...,n\}\times\{1,...,s\}\times\{0,1\}\rightarrow\{0,1\}$ and ($iv$) $\alpha^{0}=\neg \alpha$ and $\alpha^{1}=\alpha$ for any formula $\alpha$.
\end{definition}

\begin{Rem}
For the rule $r=\frac{\bigvee_{i\in I}\phi_i}{\bigvee_{j\in J}\phi_j}$ in reduced normal form we can suppose $J\subseteq I$, because let  $r'=\frac{\bigvee_{i\in I}\phi_i}{\bigvee_{j\in J-\{k\}}\phi_j}, k\in I-J,$ then $r$ and $r'$ are rejected by the same substitutions. Let a substitution $\sigma$ rejects $r'$, i.e., $\vdash \sigma(\bigvee_{i\in I}\phi_i)$ and $\nvdash \sigma(\bigvee_{j\in J-\{k\}}\phi_j)$. For each $i\in I,~~\phi_i\rightarrow \neg \phi_k$ and then $\bigvee_{i\in I}\phi_i \rightarrow \neg \phi_k$ are tautologies, so $\vdash \neg \sigma(\phi_k)$ and therefore $\nvdash \sigma(\bigvee_{j\in J}\phi_j)$. Then $\sigma $ rejects $r$. The other side is obvious. For simplicity, we use $r=\frac{I}{J}$
\end{Rem}
Using the renaming technique any modal rule can be transformed into an equivalent rule in reduced normal form \cite{Sergey1}, \cite{18}.
\begin{theorem}\label{205}
Any rule $r=\frac{\alpha}{\beta}$ can be transformed in exponential time to an equivalent rule in reduced normal form.\\
\end{theorem}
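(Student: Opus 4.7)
The plan follows the standard renaming technique of Rybakov, proceeding in three stages: modal renaming to reduce modal depth to $1$, elimination of $\Box$ in favour of $\diamond$, and Boolean normalisation into disjunctive form. First I would enumerate the modal subformulas of $\alpha$ and $\beta$ in order of non-decreasing modal depth, say $\gamma_1,\ldots,\gamma_N$. For each $\gamma_i$ I introduce a fresh propositional variable $q_i$, and let $\gamma_i^\flat$ be $\gamma_i$ with every maximal strict modal subformula $\gamma_j$ replaced by $q_j$, so that each $\gamma_i^\flat$ has modal depth exactly $1$. Then I replace $\alpha,\beta$ by their top-level renamings $\alpha^*,\beta^*$ and add the equivalences $\Box(q_i\leftrightarrow \gamma_i^\flat)$, for $i=1,\ldots,N$, as fresh premises of the rule.

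Second, if $\Box$ still occurs inside some $\gamma_i^\flat$ or $\alpha^*,\beta^*$, I rewrite $\Box\psi$ as $\neg\diamond\neg\psi$; if $\psi$ is already a variable we are done, otherwise I rename $\neg\psi$ by a fresh variable and add the corresponding boxed equivalence as a premise as well. Since every $\gamma_i^\flat$ has modal depth at most $1$, finitely many such passes suffice. Using Lemma~2.2(3)--(7) the boxed equivalences $\Box(q_i\leftrightarrow\gamma_i^\flat)$ are then rewritten as $S4$-equivalent formulas of modal depth $1$ built only from literals and from $\diamond$ applied to variables, together with propositional connectives.

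Third, treating each $p$ and each $\diamond p$ as a Boolean atom, I convert the conjunction of all premises into disjunctive normal form, which yields $\bigvee_{j\in I}\phi_j$ with $\phi_j$ in exactly the required shape. The same is done for the renamed conclusion $\beta^*$: if it is a single variable we obtain the second form of $(\mathrm{rnf})$, otherwise we produce a disjunction $\bigvee_{j\in J}\phi_j$, and by Remark~3.1 we may assume $J\subseteq I$.

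The main obstacle is verifying admissibility equivalence of the resulting rule $r'$ with the original $r$: a substitution $\sigma$ rejects $r$ iff some substitution $\sigma'$ rejects $r'$. In the forward direction, given $\sigma$ rejecting $r$, define $\sigma'(q_i):=\sigma(\gamma_i)$ and keep $\sigma$ on the original variables; by induction on $i$, each $\sigma'\bigl(\Box(q_i\leftrightarrow\gamma_i^\flat)\bigr)$ is $S4$-provable (using Lemma~2.1(2) and the necessitation rule), while $\sigma'(\beta^*)$ remains unprovable because it is $S4$-equivalent to $\sigma(\beta)$. In the backward direction, the added premises force $\sigma'(q_i)$ and $\sigma'(\gamma_i)$ to be provably equivalent in $S4$, so restricting $\sigma'$ to the original variables rejects $r$. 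The complexity of the construction is dominated by the DNF step of the third stage and is therefore at most exponential in $|\alpha|+|\beta|$, while the renaming and $\Box$-elimination stages contribute only polynomial blowup.
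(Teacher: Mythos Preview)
The paper does not actually prove this theorem: its proof is the single line ``see \cite{Sergey1}, \cite{18}'', and the sentence immediately preceding the statement already names the method as ``the renaming technique''. Your three-stage plan (modal renaming to depth~$1$, elimination of $\Box$, Boolean DNF) is precisely that technique, so your approach coincides with the one the paper defers to.

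Two small technical corrections are worth making. First, Lemma~2.2(3)--(7) will not rewrite $\Box(q_i\leftrightarrow\gamma_i^\flat)$ as an $S4$-equivalent formula of modal depth~$1$; the standard move is rather to observe that in the \emph{premise} position $\Box A$ and $A$ are admissibility-equivalent (by necessitation and axiom~$T$), so the outermost $\Box$ can simply be dropped, leaving a genuine depth-$1$ conjunction. Second, in your $\Box$-elimination step, when $\psi$ is a variable $p$ the rewrite $\Box p\mapsto\neg\diamond\neg p$ still leaves $\diamond$ applied to a non-variable, so one more renaming (a fresh variable for $\neg p$) is needed before the DNF stage produces conjuncts of the required shape $\bigwedge p_i^{t(i,j,0)}\wedge\bigwedge(\diamond p_i)^{t(i,j,1)}$. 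With these adjustments your argument goes through and matches the cited construction.
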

\begin{proof}
see \cite{Sergey1}, \cite{18}.\\
\end{proof}

 Let $\Theta_{n}=\{\phi_1,...,\phi_s\}$ be the
set of all disjuncts in $n$ variables $p_1,...,p_n$. For every $\phi_j\in \Theta_n, let$
\begin{center}
$\theta(\phi_j)=\{p_i~|~t(i,j,0)=1\}\hspace{2cm}\theta_{\diamond}(\phi_j)=\{p_i~|~t(i,j,1)=1\}$.
\end{center}
For every subset of disjuncts $W\subseteq\Theta_n$, let $M(\Theta_n, W)$
denote the Kripke model in which

\begin{center}
$W^{M}=W$, $R^{M}=\{(\phi_1,\phi_2)~|~\theta_\diamond(\phi_2)\subseteq \theta_\diamond(\phi_1)\}$, $(p_i,\phi_j)\in\nu^{m}\Leftrightarrow p_i\in \theta(\phi_j)$.
\end{center}
\begin{Ex}
In Figure $2$ and Figure $3$ we show $M(\Theta_2,\Theta_2)$ and $M(\Theta_3,\Theta_3)$, in which $i$ is used for $\phi_i$ for which $\Theta(\phi_i)$ and $\Theta_{\diamond}(\phi_i)$ are determined as they are showed in the figures. For example in Figure $2$:

$\phi_1:=p_1\wedge p_2\wedge \diamond p_1\wedge \diamond p_2$, $\phi_6:=\neg p_1\wedge p_2\wedge \diamond p_1\wedge \neg\diamond p_2$, $\phi_{16}:=\neg p_1\wedge \neg p_2\wedge \neg\diamond p_1\wedge \neg\diamond p_2$.\\

If $\Theta_{\diamond}(\phi_i)\subseteq \Theta(\phi_i)$ we show the node $i$ by $\bullet$ and otherwise we show it by $\circ$ for which $\vdash \neg \phi_i$ and then we can remove it from disjunctions in the rules. The right figures only show the nodes $\bullet$.

The first column in Table $1$ and Table $2$ show all the sets $W$, as in Theorem $3.5$, for which if $W\subseteq I$, the rule $r=\cfrac{I}{p_1}$ is inadmissible. The second column shows the simplified form of $\displaystyle\bigvee_{i\in W}\phi_{i}$, and the third shows the conditions on formulas $A, B$ and $C$ for them the substitution $\sigma$ with $\sigma(p_1)=A, \sigma(p_2)=B$ and $\sigma(p_3)=C$ rejects the rule $r$. We suppose in the tables $\vdash\diamond A$ except if $\neg\diamond p_1$ occurs in the simplified form. In the last three cases in Table $2$, the substitution $\sigma$ has not been found yet.\\
\begin{figure}
\centering
\includegraphics [scale=.5]{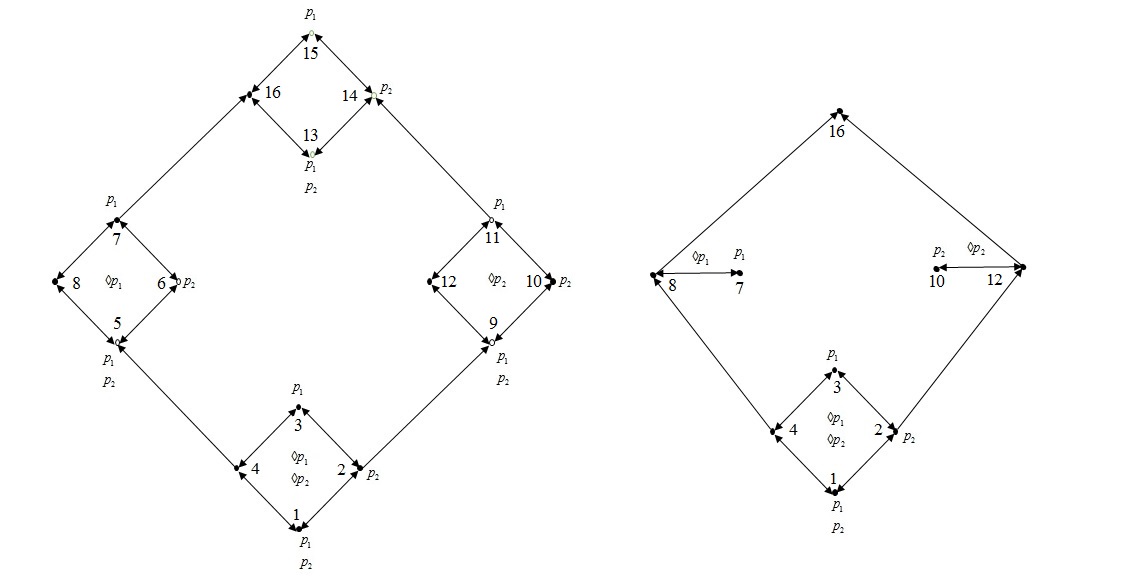}
\caption{\label{fig:math}$M(\Theta_2,\Theta_2)$}
\end{figure}
\begin{table}
\centering
\includegraphics [scale=.6]{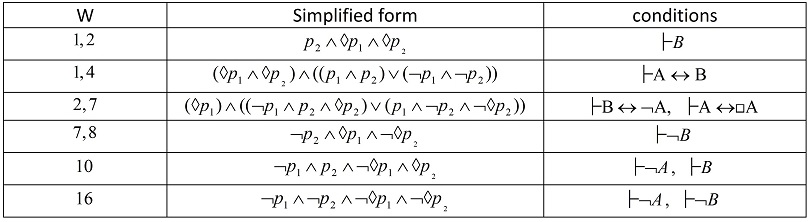}
\caption{\label{fig:2variables}}
\end{table}
{\newpage
\begin{figure}
\centering
\includegraphics [scale=.4]{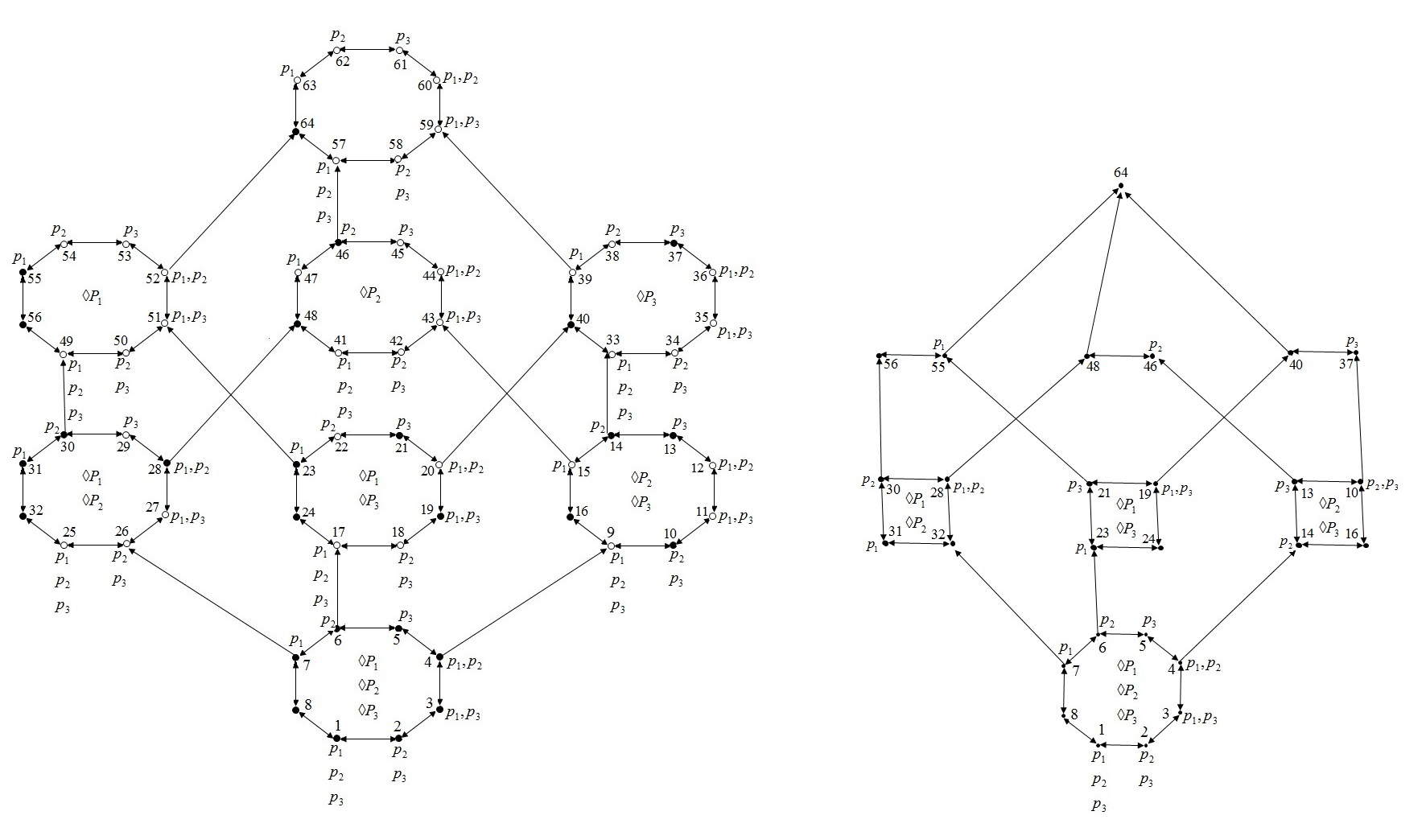}
\caption{\label{fig:nath}$M(\Theta_3,\Theta_3)$}
\end{figure}
\begin{table}
\centering
\includegraphics [scale=.6]{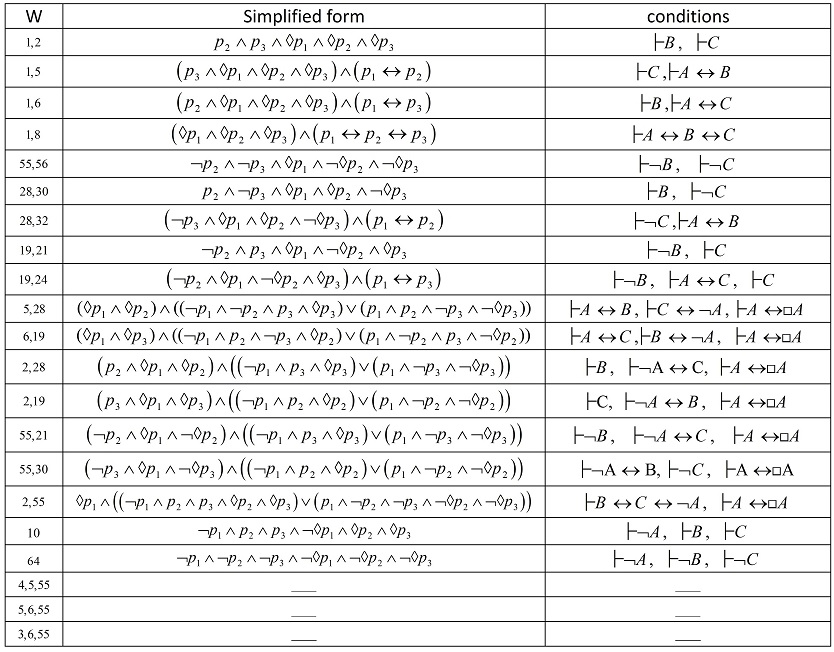}
\caption{\label{fig:3variables}}
\end{table}
}
\end{Ex}
\begin{theorem}\label{206}
Let ${\mathcal N}=(N,R)$ and ${\mathcal N}\models \displaystyle \bigvee_{i\in I}\phi_i$ and $W=\{\phi_i\in \Theta_n\mid \exists x\in N~~{\mathcal N},x\models \phi_i\}$. Then

$(1)$ If ${\mathcal N},x\models \phi_i$ then ${\mathcal N},x\models \phi$ iff ${\mathcal M}(\Theta_n,W),\phi_i\models \phi$ for each formula $\phi$.\\

$(2)$~$W\subseteq\{\phi_i\in \Theta_n\mid i\in I, {\mathcal M}(\Theta_n,W),\phi_i\models \phi_i\}$.\\

$(3)$~${\mathcal M}(\Theta_n,W)\models \displaystyle\bigvee_{i\in I'}\phi_i$ iff $W\subseteq \{\phi_i\in\Theta_n\mid i\in I'\}$.\\

$(4)$~${\mathcal N}\vDash \displaystyle\bigvee_{i\in I'}\phi_i$ iff $W\subseteq \{\phi_i\in\Theta_n\mid i\in I'\}$.\\

$(5)$ If ${\mathcal N}\nvDash p_i $ iff ${\mathcal M}(\Theta_n,W)\nvDash p_i$ for $i=1,...,n$.\\

$(6)$ If for each subset $D$ of $ N$ there exists $a\in N$ such that
\begin{center}
$\theta_{\diamond}(a)=\theta(a)\cup\displaystyle\bigcup_{d\in D}\theta_{\diamond}(d)$
\end{center}
then for each subset $D$ of $W$ there exists $\phi_j\in W$ such that
\begin{center}
$\theta_{\diamond}(\phi_j)=\theta(\phi_j)\cup\displaystyle\bigcup_{\phi\in D}\theta_{\diamond}(\phi)$
\end{center}
where $\Theta(a)=\{p_i\mid N,a\vDash p_i\}$ and $\Theta_{\diamond}(a)=\{p_i\mid N,a\vDash \diamond p_i\}$.\\
\end{theorem}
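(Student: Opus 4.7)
My plan is to treat part (1) as the technical core and derive (2)--(5) as corollaries, then settle (6) by a direct construction. For (1) I would argue by induction on the structure of $\phi$. The atomic case $\phi = p_k$ is immediate: ${\mathcal N}, x \models \phi_i$ forces ${\mathcal N}, x \models p_k$ iff $p_k \in \theta(\phi_i)$, and the valuation $\nu^m$ of ${\mathcal M}(\Theta_n, W)$ gives the same equivalence at the world $\phi_i$. Boolean connectives are routine. The key case is $\phi = \diamond \psi$. Going forward, a witness $y$ with $xRy$ and ${\mathcal N}, y \models \psi$ realises a unique disjunct $\phi_j \in \Theta_n$ (from the mutual incompatibility of distinct normal-form conjunctions), hence $\phi_j \in W$; transitivity of $R$ yields $\theta_\diamond(\phi_j) \subseteq \theta_\diamond(\phi_i)$, i.e., $\phi_i R^M \phi_j$; and the induction hypothesis at $(y, \phi_j)$ gives ${\mathcal M}(\Theta_n, W), \phi_j \models \psi$. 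For the converse, given $\phi_j \in W$ with $\phi_i R^M \phi_j$ and ${\mathcal M}(\Theta_n, W), \phi_j \models \psi$, I would pick $y \in N$ realising $\phi_j$ and use the inclusion $\theta_\diamond(\phi_j) \subseteq \theta_\diamond(\phi_i)$ together with reflexivity and transitivity of $R$ to argue that such $y$ can be chosen with $xRy$, and then apply the induction hypothesis.

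Parts (2)--(5) are quick consequences of (1). For (2), apply (1) with $\phi = \phi_i$ at any witness $x$ for $\phi_i \in W$ to obtain ${\mathcal M}(\Theta_n, W), \phi_i \models \phi_i$; the mutual incompatibility of the $\phi_k$ combined with ${\mathcal N} \models \bigvee_{k \in I} \phi_k$ forces $i \in I$. For (3), each $\phi_j \in W$ satisfies $\bigvee_{i \in I'} \phi_i$ in ${\mathcal M}(\Theta_n, W)$ iff $j \in I'$, so global validity of the disjunction equates to $W \subseteq \{\phi_i \mid i \in I'\}$. Part (4) transfers (3) to ${\mathcal N}$ using (1) pointwise, observing that each $x \in N$ realises a unique type in $W$. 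Part (5) is the atomic analogue: ${\mathcal N} \not\models p_k$ iff some $\phi_j \in W$ has $p_k \notin \theta(\phi_j)$, which matches ${\mathcal M}(\Theta_n, W) \not\models p_k$.

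For (6), given $D \subseteq W$, I would choose a witness $x_\phi \in N$ for each $\phi \in D$, set $D' = \{x_\phi \mid \phi \in D\} \subseteq N$, and apply the hypothesis to $D'$ to obtain $a \in N$ with $\theta_\diamond(a) = \theta(a) \cup \bigcup_{d \in D'} \theta_\diamond(d)$. Taking $\phi_j$ to be the unique disjunct realised at $a$ gives $\phi_j \in W$, and the identities $\theta(a) = \theta(\phi_j)$ and $\theta_\diamond(x_\phi) = \theta_\diamond(\phi)$ deliver the desired equality. The main obstacle will be the converse direction of the modal step in (1), where one must extract an actual $R$-successor in ${\mathcal N}$ from the purely combinatorial $R^M$-edge in ${\mathcal M}(\Theta_n, W)$; the other items rest straightforwardly on (1) and on the mutual incompatibility of normal-form disjuncts.
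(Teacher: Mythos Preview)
Your plan mirrors the paper's proof closely: the paper defines the surjection $f:N\to W$ sending each $x$ to the unique $\phi_i$ it realises, checks that $xRy$ implies $f(x)\,R^{M}\,f(y)$, and then invokes ``$f$ is a surjective homomorphism'' to claim (1); you unfold this same argument as a structural induction on $\phi$. Your derivations of (2)--(6) are likewise parallel to the paper's.

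The step you flag as ``the main obstacle'' is a genuine gap, and the fix you sketch does not work. From $\theta_\diamond(\phi_j)\subseteq\theta_\diamond(\phi_i)$ together with reflexivity and transitivity of $R$ one \emph{cannot} in general produce a $y$ with $xRy$ and $\mathcal N,y\models\phi_j$. With one variable $p$, take $N=\{a,b\}$, $R=\{(a,a),(b,b)\}$, and $p$ true only at $a$. Then $a$ realises $p\wedge\diamond p$ and $b$ realises $\neg p\wedge\neg\diamond p$, so $W$ consists of these two types; in $\mathcal M(\Theta_1,W)$ the first type $R^{M}$-sees the second (since $\emptyset\subseteq\{p\}$), hence $\mathcal M(\Theta_1,W)$ makes $\diamond\neg p$ true at the first type, whereas $\mathcal N,a\not\models\diamond\neg p$. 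So $f$ is not a bounded morphism and (1) fails for $\phi=\diamond\neg p$. The paper's own proof glosses over exactly this point: it verifies only the forth condition and then asserts the full invariance as though $f$ were a p-morphism. Your handling of (2)--(6) is essentially sound and does not truly need the back direction of (1); in particular, $\mathcal M(\Theta_n,W),\phi_j\models\phi_j$ for $\phi_j\in W$ can be checked directly from the forth condition and reflexivity of $R$ in $\mathcal N$, and that is all the downstream applications of the theorem in the paper actually use.
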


\begin{proof}

First note that if ${\mathcal N},x\models \phi_i$ and ${\mathcal N},x\models \phi_j$ then $\phi_i=\phi_j$. Define $f:N\displaystyle\longrightarrow_{onto} W$ with $f(x)=\phi_i$ iff ${\mathcal N},x\models \phi_i$. $f$ is a homomorphism, because for $x,y \in N$ if $xRy$, $f(x)=\phi_i, f(y)=\phi_j $ and $p_k\in \Theta_{\diamond}(f(y))=\Theta_{\diamond}(\phi_j)$, since ${\mathcal N},y\models \phi_j$ then ${\mathcal N},y\models \diamond p_k$, thus ${\mathcal N},x\models \diamond p_k$ and since ${\mathcal N},x\models\phi_i$, so $ p_k\in \Theta_{\diamond}(f(x))=\Theta_{\diamond}(\phi_i)$. Therefore $\Theta_{\diamond}(f(y))\subseteq \Theta_{\diamond}(f(x))$, then $f(x)R^{\mathcal M} f(y)$.\\

$(1)$ Since $f$ is a surjective homomorphism then ${\mathcal N},x\models \phi$ iff ${\mathcal M}(\Theta_n,W),f(x)\models \phi$ iff ${\mathcal M}(\Theta_n,W),\phi_i\models \phi$, if ${\mathcal N},x\models \phi_i$.\\

$(2)$~By $(1)$ for $\phi=\phi_i\in W$ let ${\mathcal N},x\models \phi_i$ then ${\mathcal M}(\Theta_n,W),\phi_i\models \phi_i$. Again $W\subseteq\{\phi_i\mid i\in I\}$; Because, if $j\notin I$ and $\phi_j\in W $, let ${\mathcal N},x\vDash \phi_j$, since $\phi_j\rightarrow \neg\phi_i$ for each $i\in I$ and thus $\phi_j\rightarrow \neg\displaystyle\bigvee_{i\in I}\phi_i $ are tautologies  then ${\mathcal N},x\vDash\neg\displaystyle\bigvee_{i\in I}\phi_i $, contradicting ${\mathcal N}\vDash\displaystyle\bigvee_{i\in I}\phi_i $.\\

$(3)(\Rightarrow)$ Let $\phi_j\in W$and $j\in I' $. Since ${\mathcal M}(\Theta_n,W),\phi_j\models \phi_j$ and $\models \phi_j\rightarrow \neg \phi_i$ for $i\neq j$ and then $\models \phi_j\rightarrow \neg\displaystyle\bigvee_{i\in I'}\phi_i$ so ${\mathcal M}(\Theta_n,W),\phi_j\models \neg\displaystyle\bigvee_{i\in I'}\phi_i$.\\

$(\Leftarrow)$ By $(2)$.\\

$(4)$ By $(1)$ and $(3)$.\\

$(5)$ Let ${\mathcal N},w\nvDash p_i$ and $f(w)=\phi_j$ then ${\mathcal M}(\Theta_n,W),f(w)\nvDash p_i$ therefore ${\mathcal M}(\Theta_n,W),\phi_j\nvDash p_i$.\\

$(6)$ Let $D\subseteq W$. Since $f$ is surjective, for each $\phi_i\in D$ pick  and  fix a  representative $d_i$ such that $f(d_i)=\phi_i$ and take $D'=\{d_i\in N \mid \phi_i\in D\}$. Let $a\in N, f(a)=\phi_j$  and $\theta_{\diamond}(a)=\theta(a)\cup\displaystyle\bigcup_{d\in D'}\theta_{\diamond}(d)=\theta(a)\cup\displaystyle\bigcup_{\phi_i\in D}\theta_{\diamond}(d_i)$, then by $\Theta(a)=\Theta(f(a))=\Theta(\phi_j)$ and $\Theta_{\diamond}(a)=\Theta_{\diamond}(f(a))=\Theta_{\diamond}(\phi_j)$ and $\Theta_{\diamond}(d_i)=\Theta_{\diamond}(f(d_i))=\theta_{\diamond}(\phi_i))$, we get $\Theta_{\diamond}(\phi_j)=\theta(\phi_j)\cup\displaystyle\bigcup_{\phi_i\in D}\theta_{\diamond}(\phi_i))$.\\

\end{proof}

\begin{theorem}\label{207}

A rule $r=\frac{\displaystyle\bigvee_{i\in I}\phi_i\vee\displaystyle\bigvee_{j\in J}\phi_j}{\displaystyle\bigvee_{j\in J}\phi_j}$ is invalid for $S4$ models iff there is a set $W\subseteq \{\phi_i\in\Theta_n\mid i\in I\cup J\}$ such that

$(1)$~${\mathcal M}(\Theta_n,W), \phi_j\vDash \phi_j$ for all $\phi_j\in W$.\\

$(2)$ ${\mathcal M}(\Theta_n,W)\vDash \displaystyle\bigvee_{i\in I}\phi_i\vee \displaystyle \bigvee_{j\in J}\phi_j$.\\

$(3)$ There is $i\in I$ such that $\phi_i\in W$ and ${\mathcal M}(\Theta_n,W), \phi_i\nvDash\displaystyle\bigvee_{j\in J}\phi_j$.\\

$(4)$~$r$ is invalid in ${\mathcal M}(\Theta_n,W)$.\\
\end{theorem}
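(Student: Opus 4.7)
The plan is to treat the theorem as a characterization that turns semantic invalidity of $r$ over the class of all $S4$-models into invalidity in a single canonical model $\mathcal{M}(\Theta_n,W)$ built from the disjuncts actually realized in a counter-model. The $(\Leftarrow)$ direction is essentially free from clause~(4): the frame underlying $\mathcal{M}(\Theta_n,W)$ is reflexive and transitive because its accessibility is defined by inclusion of $\theta_\diamond$-sets, hence it is an $S4$-model, and clause~(4) directly asserts $r$ fails in it. The substance lies in the $(\Rightarrow)$ direction, where a witness set $W$ is harvested from an arbitrary $S4$ counter-model and Theorem~\ref{206} is applied clause by clause.

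Concretely, assume $r$ is invalid: some $S4$-model $\mathcal{N}=(N,R,V)$ satisfies $\mathcal{N}\vDash\bigvee_{k\in I\cup J}\phi_k$ but $\mathcal{N}\nvDash\bigvee_{j\in J}\phi_j$. Define
\[
W=\{\phi_i\in\Theta_n\mid \exists\,x\in N,\ \mathcal{N},x\vDash \phi_i\}.
\]
Since every world of $\mathcal{N}$ satisfies exactly one disjunct in $\Theta_n$, Theorem~\ref{206}(2) (read with $I$ replaced by $I\cup J$, as its proof allows) gives $W\subseteq\{\phi_k\mid k\in I\cup J\}$. Clauses (1)--(3) then fall out almost mechanically: clause~(1) of the statement is Theorem~\ref{206}(1) applied with $\phi=\phi_j$ at a witness world for each $\phi_j\in W$; clause~(2) is Theorem~\ref{206}(4); for clause~(3) I would use Theorem~\ref{206}(4) with $I'=J$ to conclude $W\not\subseteq\{\phi_j\mid j\in J\}$, pick $\phi_i\in W$ with $i\in I\setminus J$, and invoke the propositional incompatibility of distinct disjuncts (if $i\ne j$ then $\phi_i\wedge\phi_j$ contains some literal $\alpha$ together with its negation) to conclude $\mathcal{M}(\Theta_n,W),\phi_i\nvDash\bigvee_{j\in J}\phi_j$. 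Clause~(4) is then immediate from (2) and (3).

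The main risk I foresee is bookkeeping rather than mathematical depth. It is easy to misread Theorem~\ref{206}, whose standing hypothesis is $\mathcal{N}\vDash\bigvee_{i\in I}\phi_i$, and to carry only $I$ through the argument; the correct move is to substitute $I\cup J$ everywhere, so that $W$ is bounded by the numerator of $r$ and not by any smaller set. One should also state explicitly, rather than leave silent, that distinct $\phi_i,\phi_j$ force a literal-level contradiction since they differ in at least one value of $t(\cdot,\cdot,\cdot)$; this is the only observation beyond Theorem~\ref{206} that clause~(3) requires. Once these points are pinned down, each of (1)--(4) aligns directly with the corresponding part of Theorem~\ref{206} and no further structural work is needed.
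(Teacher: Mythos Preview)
Your proposal is correct and follows essentially the same approach as the paper: define $W$ as the set of disjuncts realized in a counter-model $\mathcal{N}$, invoke Theorem~\ref{206} to obtain clauses~(1) and~(2), argue clause~(3) from $W\not\subseteq\{\phi_j\mid j\in J\}$ together with the propositional incompatibility $\phi_i\rightarrow\neg\bigvee_{j\in J}\phi_j$, and read off~(4) from~(2) and~(3). Your treatment is in fact slightly more complete, since you spell out the easy $(\Leftarrow)$ direction (reflexivity and transitivity of $R^{\mathcal{M}}$) which the paper leaves implicit.
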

\begin{proof}
Let $r$ is invalid in ${\mathcal N}(N,R)$ then ${\mathcal N}\vDash \displaystyle\bigvee_{i\in I}\phi_i\vee\displaystyle\bigvee_{j\in J}\phi_j$ and ${\mathcal N}\nvDash\displaystyle\bigvee_{j\in J}\phi_j$. Let $W=\{\phi_i\in \Theta_n\mid \exists w\in N~~{\mathcal N},w\models \phi_i\}$. By Theorem 3.3, we get $(1)$ and $(2)$.\\

$(3)$ Since ${\mathcal N}\nvDash\displaystyle\bigvee_{j\in J}\phi_j$, then $W\nsubseteq \{\phi_i\in\Theta_n\mid i\in J\}$ and since $W\subseteq \{\phi_i\in\Theta_n\mid i\in I\cup J\}$ then there is an $i\in I-J$ such that $\phi_i\in W$. By $\vDash\phi_i\rightarrow\neg\displaystyle\bigvee_{j\in J}\phi_j$ and ${\mathcal M}(\Theta_n,W), \phi_i\vDash \phi_i$ we get ${\mathcal M}(\Theta_n,W), \phi_i\vDash\neg\displaystyle\bigvee_{j\in J}\phi_j$, then ${\mathcal M}(\Theta_n,W), \phi_i\nvDash\displaystyle\bigvee_{j\in J}\phi_j$.\\

$(4)$ By $(2)$ and $(3)$.\\
\end{proof}

\begin{theorem}\label{208}
A rule $r=\cfrac{\displaystyle\bigvee_{i\in I}\phi_i}{p_1}$ is inadmissible for $S4$ iff there is a set $W\subseteq \{\phi_i\mid i\in I\}$ such that\\
$(1)$ There is $\phi_j\in W$ such that ${\mathcal M}(\Theta_n,W), \phi_j\nvDash p_1$.\\

$(2)$~${\mathcal M}(\Theta_n,W), \phi_j\vDash \phi_j$ for all $\phi_j\in W$.\\

$(3)$ For each subset $D$ of $W$ there exists $\phi_j\in W$ such that\\
\begin{center}
$\theta_{\diamond}(\phi_j)=\theta(\phi_j)\cup\displaystyle\bigcup_{\phi\in{\mathcal D}}\theta_{\diamond}(\phi)$.
\end{center}
Note that in (3),${\mathcal D}$ can be empty.\\
\end{theorem}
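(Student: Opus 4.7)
I would prove the biconditional by invoking Theorem~3.2 together with the semantic machinery of Theorem~3.6, translating between definable valuations on the $n$-characterizing $S4$-model and subsets $W$ of disjuncts satisfying the stated structural properties.

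For the direction $(\Rightarrow)$, assume $r$ is inadmissible. Theorem~3.2 (specialized to a single rule) yields an $n$-characterizing $S4$-model $(K_n, V_n)$ together with a definable valuation $S$ such that $(K_n, S) \vDash \bigvee_{i\in I}\phi_i$ and $(K_n, S) \nvDash p_1$. I would then set
\[W := \{\phi_j \in \Theta_n \mid \exists w \in K_n,\ (K_n, S), w \vDash \phi_j\}\]
and apply Theorem~3.6 to $\mathcal{N}=(K_n,S)$: part~(4) gives the containment $W \subseteq \{\phi_i \mid i \in I\}$; part~(5) produces a $\phi_j \in W$ with $\mathcal{M}(\Theta_n,W),\phi_j \nvDash p_1$, which is~(1); part~(2) yields~(2); and finally, using the standard fact that the $n$-characterizing $S4$-model has the closure property that for every $D \subseteq K_n$ there is a world $a$ with $\theta_\diamond(a)=\theta(a)\cup\bigcup_{d\in D}\theta_\diamond(d)$, Theorem~3.6(6) transfers this property from $K_n$ to $\mathcal{M}(\Theta_n, W)$, giving~(3).

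For the direction $(\Leftarrow)$, given $W$ satisfying (1)--(3) I consider $\mathcal{M} := \mathcal{M}(\Theta_n, W)$. It is reflexive and transitive, hence an $S4$-model; by~(2) and $W \subseteq \{\phi_i \mid i \in I\}$ it satisfies $\mathcal{M} \vDash \bigvee_{i \in I}\phi_i$, while by~(1) one has $\mathcal{M} \nvDash p_1$. To upgrade invalidity in $\mathcal{M}$ to inadmissibility of $r$, I would use condition~(3) recursively to construct a surjective p-morphism $g\colon K_m \to \mathcal{M}$ from the $m$-characterizing $S4$-model onto $\mathcal{M}$, where each $w \in K_m$ is sent to the $\phi_j \in W$ whose $\theta_\diamond$-set equals $\theta(\phi_j)\cup\bigcup_{d}\theta_\diamond(g(d))$, with $d$ ranging over $R$-successors of $w$ already handled; condition~(3) is precisely what supplies the required $\phi_j$ at every step. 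The pullback valuation $S'$ on $K_m$, defined by $(K_m,S'), w \vDash p_i$ iff $p_i \in \theta(g(w))$, is definable because distinct $(\theta,\theta_\diamond)$-types on the $n$-variable fragment are separated by $S4$-formulas in $p_1,\ldots,p_n$. By the p-morphism property $(K_m,S') \vDash \bigvee_{i\in I}\phi_i$ and $(K_m,S') \nvDash p_1$, and Theorem~3.2 then yields inadmissibility of $r$.

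The principal obstacle is the $(\Leftarrow)$ direction, in making $g$ a genuine p-morphism. The induction on accessibility depth in $K_m$ uses the empty-$D$ instance of~(3) to anchor at minimal-cone worlds and the nonempty instances for the inductive step. The back-condition of the p-morphism relies on the equality (not mere inclusion) in $\theta_\diamond(\phi_j)=\theta(\phi_j)\cup\bigcup_{\phi\in D}\theta_\diamond(\phi)$, which rules out spurious $\diamond$-successors in $\mathcal{M}$ that would otherwise have no preimage in $K_m$; verifying definability of $S'$ at the same time requires care to keep the separating formulas within the $n$ variables of $r$.
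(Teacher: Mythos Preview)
Your $(\Rightarrow)$ direction matches the paper's proof essentially verbatim: invoke Theorem~3.1 to obtain $Ch_{S4}(n)$ refuting $r$, take $W$ to be the set of realized $\phi_j$, and read off conditions (1)--(3) from Theorem~3.3, using that $Ch_{S4}(n)$ is built so that the closure property behind~(3) holds.

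For $(\Leftarrow)$ the paper proceeds differently. Rather than constructing a p-morphism from a characterizing model onto $\mathcal{M}(\Theta_n,W)$, it cites Theorem~3.4.10 of \cite{Sergey1} as a black box: given $W$ satisfying (1)--(3), the model $\mathcal{M}(\Theta_n,W)$ can be \emph{extended} to an $n$-characterizing model $Ch_{S4}(n)$ equipped with a definable valuation $S$ that agrees with the original valuation on $\mathcal{M}(\Theta_n,W)$ and satisfies $Ch_{S4}(n)\models\bigvee_{i\in I}\phi_i$. Since $\mathcal{M}(\Theta_n,W)$ sits inside, the failure of $p_1$ persists, and Theorem~3.1 finishes. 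Condition~(3) is exactly what the cited construction needs, but the work of producing the extension and proving definability is outsourced.

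Your alternative route---building a surjective p-morphism $g\colon K_m\to\mathcal{M}(\Theta_n,W)$ and pulling back the valuation---is morally the dual picture and would also succeed, but the sketch as written has a gap. You propose an ``induction on accessibility depth in $K_m$,'' yet $S4$ characterizing models are reflexive and contain nontrivial clusters, so there is no well-founded depth to induct on. The actual construction (in Rybakov's work) proceeds along the \emph{stages} by which $Ch_{S4}(n)$ is generated (iteratively adjoining new roots below finite antichains), and condition~(3) is invoked at each stage to pick the target $\phi_j$. Moreover, the claim that the pullback valuation $S'$ is definable is not immediate from ``distinct types are separated by formulas''; definability here means expressibility in the original valuation of $K_m$, and establishing that is precisely the nontrivial content of the cited Theorem~3.4.10. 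So either rework your induction to follow the stage-wise build of $Ch_{S4}(n)$, or---as the paper does---invoke the external result directly.
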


\begin{proof}
$(\Rightarrow)$ Let $r$ be inadmissible for $S4$, then by Theorem 3.1 there is an $n$-characterizing $S4$ model $Ch_{S4}(n)$ in which $r$ is invalid and by its way of construction the above condition $(3)$ holds for it. Let $W =\{\phi_i\in\Theta_n|\exists w\in Ch_{S4}(n)~~~~Ch_{S4}(n),w\models \phi_i\}$. Then the conditions $(1)-(3)$ hold for ${\mathcal M}(\Theta_n,W)$ by Theorem 3.3.\\
$(\Leftarrow)$ By $(1)$ and $(2)$, the rule $r$ is invalid in ${\mathcal M}(\Theta_n,W)$, and by the proof of Theorem 3.4.10 \cite{Sergey1} there is an extension of ${\mathcal M}(\Theta_n,W)$ to an $n$-characterizing $S4$ model $Ch_{S4}(n)$ with effectively contractible definable valuation $S$ of the variables $p_1,...,p_n$ which coincides with the original valuation of ${\mathcal M}(\Theta_n,W)$ and $Ch_{S4}(n)\models \displaystyle\bigvee_{i\in I}\phi_i $. Since ${\mathcal M}(\Theta_n,W)\nvDash p_1 $ and $Ch_{S4}(n)$ is an extension of ${\mathcal M}(\Theta_n,W)$ then $Ch_{S4}(n)\nvDash p_1$. Then $Ch_{S4}(n)\nvDash r$, therefore $r$ is inadmissible by Theorem 3.1
\end{proof}

\begin{theorem}\label{209}
A rule $r=\frac{\displaystyle\bigvee_{i\in I}\phi_i\vee \displaystyle\bigvee_{j\in J}\phi_j }{\displaystyle\bigvee_{j\in J}\phi_j}$ is inadmissible for $S4$ iff there is a set $W\subseteq \{\phi_i\in \Theta_n\mid i\in I\cup J\}$ such that\\
$(1)$~$\phi_i\in W$ for some $i\in I$.\\

$(2)$~${\mathcal M}(\Theta_n,W), \phi_j\vDash \phi_j$ for all $\phi_j\in W$.\\

$(3)$ For each subset $\mathcal(D)$ of ${\mathcal M}$ there exists $\phi_j\in W$ such that\\
\begin{center}
$\theta_{\diamond}(\phi_j)=\theta(\phi_j)\cup\displaystyle\bigcup_{\phi\in{\mathcal D}}\theta_{\diamond}(\phi)$.
\end{center}
\end{theorem}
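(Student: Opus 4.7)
The plan is to imitate the proof of Theorem 3.4, viewing the disjunction $\bigvee_{j\in J}\phi_j$ in the conclusion as playing the same role that $p_1$ played there. Conditions (2) and (3) are again precisely the conditions guaranteeing, respectively, that $W$ consists of genuinely realized disjuncts in ${\mathcal M}(\Theta_n,W)$ and that this model has the $\diamond$-saturation property enjoyed by $n$-characterizing $S4$-models. Condition (1) is the analogue of condition (1) in Theorem 3.4: it is what will falsify the conclusion.

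For the forward direction I would start from the assumption that $r$ is inadmissible. By Theorem 3.1, there is an $n$-characterizing $S4$-model $Ch_{S4}(n)$ in which $r$ is invalid, i.e.\ $Ch_{S4}(n)\vDash \bigvee_{i\in I}\phi_i\vee\bigvee_{j\in J}\phi_j$ while $Ch_{S4}(n)\nvDash\bigvee_{j\in J}\phi_j$. Set $W=\{\phi_k\in\Theta_n\mid\exists w\in Ch_{S4}(n),\ Ch_{S4}(n),w\vDash\phi_k\}$. Then Theorem 3.3(2) gives both $W\subseteq\{\phi_k\mid k\in I\cup J\}$ and condition (2). The $\diamond$-saturation property of $Ch_{S4}(n)$ transfers to ${\mathcal M}(\Theta_n,W)$ through Theorem 3.3(6), yielding (3). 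For (1), invoke Theorem 3.3(4): from $Ch_{S4}(n)\nvDash\bigvee_{j\in J}\phi_j$ we get $W\not\subseteq\{\phi_j\mid j\in J\}$, so some $\phi_i$ with $i\in I\setminus J$ lies in $W$.

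For the reverse direction, assume (1)--(3) and first verify that $r$ is invalid in ${\mathcal M}(\Theta_n,W)$. By (2) and Theorem 3.3(4) the premise $\bigvee_{k\in I\cup J}\phi_k$ is valid in ${\mathcal M}(\Theta_n,W)$. Taking the $\phi_i$ supplied by (1), condition (2) gives ${\mathcal M}(\Theta_n,W),\phi_i\vDash\phi_i$, and together with the propositional tautology $\phi_i\to\neg\phi_j$ for $i\neq j$ this forces ${\mathcal M}(\Theta_n,W),\phi_i\nvDash\bigvee_{j\in J}\phi_j$, so the conclusion fails at $\phi_i$. Exactly as in the proof of Theorem 3.4, I would then appeal to the extension construction of Theorem 3.4.10 in \cite{Sergey1} to embed ${\mathcal M}(\Theta_n,W)$ into an $n$-characterizing $S4$-model $Ch_{S4}(n)$ via an effectively contractible definable valuation $S$ coinciding with the original valuation on ${\mathcal M}(\Theta_n,W)$; condition (3) is exactly what licenses this extension. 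Since the embedding preserves the truth values of all formulas over $W$, the premise remains valid and the point $\phi_i\in W$ still refutes the conclusion, so $r$ is invalid in $Ch_{S4}(n)$ and Theorem 3.1 delivers inadmissibility.

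The main obstacle is verifying that the extension from \cite{Sergey1} is compatible with both sides of the rule simultaneously: one needs that no newly added world forces a disjunct $\phi_k$ with $k\notin I\cup J$ (which would break validity of the premise over the original alphabet) and that no added world satisfies any $\phi_j$ with $j\in J$ beyond what was already present. Both facts rest on the $\diamond$-saturation condition (3), which ensures that every $\diamond$-consistent demand on $W$ is met by an existing node, so the extension need only supply the universal structural points required by the $n$-characterizing model, without introducing fresh disjuncts that could sabotage invalidity.
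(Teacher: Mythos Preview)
Your argument is correct and follows essentially the same route as the paper, which simply says ``similar to the previous theorem'' and then notes the two adjustments you spell out: the premise becomes $\bigvee_{i\in I}\phi_i\vee\bigvee_{j\in J}\phi_j$ in the extended model, and the failure of $\bigvee_{j\in J}\phi_j$ in ${\mathcal M}(\Theta_n,W)$ persists in $Ch_{S4}(n)$ because the latter is an extension. One small slip: the theorem you are imitating (the $p_1$ case) is Theorem~3.5 in the paper's numbering, not Theorem~3.4; and your final paragraph about newly added worlds possibly introducing foreign disjuncts is a worry the paper does not raise, since it is absorbed into the cited Theorem~3.4.10 of \cite{Sergey1}, which already guarantees $Ch_{S4}(n)\vDash\bigvee_{\phi\in W}\phi$.
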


\begin{proof}
$(\Rightarrow)$ Similar to the previous theorem.\\
 $(\Leftarrow)$ Similar to the previous theorem except $Ch_{S4}(n)\models \displaystyle\bigvee_{i\in I}\phi_i\vee \displaystyle\bigvee_{j\in J}\phi_j $ and since ${\mathcal M}(\Theta_n,W)\nvDash \displaystyle\bigvee_{j\in J}\phi_j $ and $Ch_{S4}(n)$ is an extension of ${\mathcal M}(\Theta_n,W)$ then $Ch_{S4}(n)\nvDash \displaystyle\bigvee_{j\in J}\phi_j$.\\
\end{proof}

\begin{theorem}\label{210}
Rules $r_1=\frac{\displaystyle\bigvee_{i\in I_1}\phi_i\vee \displaystyle\bigvee_{j\in J_1}\phi_j }{\displaystyle\bigvee_{j\in J_1}\phi_j},...,r_m=\frac{\displaystyle\bigvee_{i\in I_m}\phi_i\vee \displaystyle\bigvee_{j\in J_m}\phi_j }{\displaystyle\bigvee_{j\in J_m}\phi_j}$ are invalid in the same $S4$ model iff there is a set $W\subseteq \{\phi_i\in \Theta_n\mid i\in I_k\cup J_k\}$ for $k=1,...,m$ such that\\

$(1)$ $\phi_i\in W$ for some $i\in I_k$ for $k=1,...,m$.\\

$(2)$ ${\mathcal M}(\Theta_n,W), \phi_j\models \phi_j$ for all $\phi_j\in W$.\\
then $r_1,...,r_m$ are invalid ${\mathcal M}(\Theta_n,W)$.
\end{theorem}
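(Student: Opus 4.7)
The plan is to model this proof on that of Theorem 3.7, but apply the same witness set $W$ simultaneously to every rule $r_k$. In one direction $W$ will be extracted from a common invalidating $S4$-model, and in the other direction $\mathcal{M}(\Theta_n,W)$ itself will serve as the common invalidating model; Theorem 3.6 will do most of the heavy lifting.

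For the implication $(\Rightarrow)$, I would assume $r_1,\ldots,r_m$ are simultaneously invalid in some $S4$-model $\mathcal{N}=(N,R)$, so that $\mathcal{N}\vDash\bigvee_{i\in I_k\cup J_k}\phi_i$ and $\mathcal{N}\nvDash\bigvee_{j\in J_k}\phi_j$ hold for each $k$. As in Theorem 3.6, set $W=\{\phi_i\in\Theta_n\mid\exists w\in N,\ \mathcal{N},w\vDash\phi_i\}$. Theorem 3.6(4) then gives $W\subseteq\{\phi_i\mid i\in I_k\cup J_k\}$ and $W\not\subseteq\{\phi_j\mid j\in J_k\}$ for every $k$; under the convention of Remark 3.2 that $I_k$ and $J_k$ are disjoint, the failure of the second inclusion yields some $\phi_i\in W$ with $i\in I_k$, establishing (1). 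Condition (2) is immediate from Theorem 3.6(1)--(2), since the surjective homomorphism $f\colon N\to W$ used there transports satisfaction and every $\phi_i\in W$ is realized by construction.

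For the implication $(\Leftarrow)$, suppose $W\subseteq\{\phi_i\mid i\in I_k\cup J_k\}$ satisfies (1) and (2). I would apply Theorem 3.6 internally, taking $\mathcal{N}=\mathcal{M}(\Theta_n,W)$: condition (2) forces the set of realized disjuncts in $\mathcal{M}(\Theta_n,W)$ to coincide with $W$, so Theorem 3.6(4) applies. Hence $\mathcal{M}(\Theta_n,W)\vDash\bigvee_{i\in I_k\cup J_k}\phi_i$ follows from $W\subseteq\{\phi_i\mid i\in I_k\cup J_k\}$, while condition (1) produces some $\phi_i\in W$ with $i\in I_k\setminus J_k$, so $W\not\subseteq\{\phi_j\mid j\in J_k\}$ and therefore $\mathcal{M}(\Theta_n,W)\nvDash\bigvee_{j\in J_k}\phi_j$. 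This exhibits a single $S4$-model in which every $r_k$ fails.

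The main obstacle I foresee is the simultaneity of the argument: one must select a single set $W$ that witnesses invalidity for all $m$ rules at once, rather than one $W_k$ per rule as Theorem 3.7 would produce in isolation. The key observation making this feasible is that Theorem 3.6(4) characterizes which disjunctions a model validates solely in terms of the set of realized disjuncts, so the same $W$ automatically handles every rule whose index set suitably contains it. A minor bookkeeping point is the disjointness of $I_k$ from $J_k$, which is guaranteed after the normalization in Remark 3.2.
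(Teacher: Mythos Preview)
The paper states this theorem without proof, treating it as the evident multi-rule analogue of the single-rule invalidity criterion (the paper's Theorem~3.4). Your argument is correct and is exactly that generalization, using Theorem~3.3 to pass between a model and its set $W$ of realized disjuncts; note only that your cross-references are shifted --- what you call Theorems~3.6 and~3.7 and Remark~3.2 are the paper's Theorems~3.3, 3.4 and Remark~3.1.
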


\begin{theorem}\label{211}
Rules $r_1=\frac{\displaystyle\bigvee_{i\in I_1}\phi_i\vee \displaystyle\bigvee_{j\in J_1}\phi_j }{\displaystyle\bigvee_{j\in J_1}\phi_j},...,r_m=\frac{\displaystyle\bigvee_{i\in I_m}\phi_i\vee \displaystyle\bigvee_{j\in J_m}\phi_j }{\displaystyle\bigvee_{j\in J_m}\phi_j}$ are inadmissible for $S4$ with the same substitution $\sigma$ iff there is a set $W\subseteq \{\phi_i\in \Theta_n\mid i\in I_k\cup J_k\}$ for $k=1,...,m$ such that\\

$(1)$ $\phi_i\in W$ for some $i\in I_k$ for $k=1,...,m$.\\

$(2)$~${\mathcal M}(\Theta_n,W), \phi_j\models \phi_j$ for all $\phi_j\in W$.\\

$(3)$ For each subset $\mathcal(D)$ of ${\mathcal M}$ there exists $\phi_j\in W$ such that\\
\begin{center}
$\theta_{\diamond}(\phi_j)=\theta(\phi_j)\cup\displaystyle\bigcup_{\phi\in{\mathcal D}}\theta_{\diamond}(\phi)$.
\end{center}
\end{theorem}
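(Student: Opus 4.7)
The plan is to combine the single-rule template of Theorem 3.6 (or rather Theorem 3.7 / 3.8 as stated in the paper, which gives inadmissibility of one rule via a $W$ satisfying (1)-(3)) with the multi-rule characterization from Theorem 3.3, which tells us that simultaneous inadmissibility via a single $\sigma$ is equivalent to simultaneous invalidity in some $n$-characterizing model under a common definable valuation.

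For the $(\Rightarrow)$ direction, I would start by invoking Theorem 3.3 on $r_1, \dots, r_m$: inadmissibility with the same $\sigma$ yields an $n$, an $n$-characterizing model $Ch_{S4}(n)$ and a definable valuation $S$ on it so that every $r_k$ is invalid in $(Ch_{S4}(n), S)$. Then set $W = \{\phi_i \in \Theta_n \mid \exists x \in Ch_{S4}(n),\ Ch_{S4}(n), x \models \phi_i\}$. Clause (2) is immediate from Theorem 3.4(1)--(2) applied to this $W$. For clause (1), fix $k$: the premise $\bigvee_{i \in I_k}\phi_i \vee \bigvee_{j \in J_k}\phi_j$ holds throughout $Ch_{S4}(n)$, so by Theorem 3.4(4) we have $W \subseteq \{\phi_i \mid i \in I_k \cup J_k\}$; since the conclusion $\bigvee_{j \in J_k}\phi_j$ fails at some world $x$, the unique $\phi_i$ satisfied at $x$ must have index in $I_k \setminus J_k$, giving a witness in $W$. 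Clause (3) is obtained from Theorem 3.4(6): the standard construction of $Ch_{S4}(n)$ (as in \cite{Sergey1}) has the closure property $\theta_\diamond(a) = \theta(a) \cup \bigcup_{d \in D'} \theta_\diamond(d)$ realized for every subset $D' \subseteq Ch_{S4}(n)$, which pushes down to $W$ through the surjective homomorphism used in the proof of Theorem 3.4.

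For the $(\Leftarrow)$ direction, assume $W$ satisfies (1)--(3). Condition (3) is precisely the hypothesis needed to apply the extension lemma underlying Theorem 3.4.10 of \cite{Sergey1}: there exists an $n$-characterizing $S4$ model $Ch_{S4}(n)$ and an effectively contractible definable valuation $S$ which extends $\mathcal{M}(\Theta_n, W)$ and agrees with it on the nodes of $W$. I would then check, uniformly in $k$, that this extension invalidates $r_k$. Validity of the premise: since $W \subseteq \{\phi_i \mid i \in I_k \cup J_k\}$ and condition (2) says each $\phi_j \in W$ satisfies $\phi_j$ in $\mathcal{M}(\Theta_n, W)$, the disjunction $\bigvee_{i \in I_k}\phi_i \vee \bigvee_{j \in J_k}\phi_j$ holds at every node of $\mathcal{M}(\Theta_n, W)$, and the extension argument of \cite{Sergey1} preserves validity of this formula on the whole $Ch_{S4}(n)$. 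Failure of the conclusion: by (1) pick $\phi_i \in W$ with $i \in I_k$; mutual exclusivity of distinct $\phi_j$ forces $\mathcal{M}(\Theta_n, W), \phi_i \not\models \bigvee_{j \in J_k}\phi_j$ (after discarding the harmless case $i \in J_k$, which can be removed as in Remark 3.1), and this persists in $Ch_{S4}(n)$. Finally, applying Theorem 3.3 in the converse direction to the definable valuation $S$ yields the common substitution $\sigma$ rejecting every $r_k$.

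The main obstacle is the $(\Leftarrow)$ direction's extension step: condition (3) has to be exactly the right closure property to let the construction from \cite{Sergey1} glue extra $S4$-worlds onto $\mathcal{M}(\Theta_n, W)$ without disturbing the valuations of the $\phi_j \in W$, and one has to verify that a single such extension simultaneously works for all $m$ rules. This is really the point where the passage from one rule to several rules requires care: the same $W$ (hence the same extension and the same substitution $\sigma$) must witness invalidity of every $r_k$, and conditions (1) and (2) are stated precisely so that $W$ need not depend on $k$. A secondary, mostly bookkeeping, step is to confirm that the "absorb $J_k$ into $I_k$" reduction of Remark 3.1 remains valid uniformly across the rule set, so that the dichotomy $i \in I_k \setminus J_k$ versus $i \in J_k$ does not cause trouble in the witness argument.
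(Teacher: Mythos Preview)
Your proposal is correct and follows essentially the same route as the paper: for $(\Rightarrow)$ invoke Theorem~3.1 to get an $n$-characterizing model, set $W$ to be the set of $\phi_i$ realized there, and read off (1)--(3) via Theorem~3.3; for $(\Leftarrow)$ use condition (3) to invoke the extension construction of Theorem~3.4.10 in \cite{Sergey1}, check that the premise of each $r_k$ holds on the whole extension while the conclusion fails at the chosen $\phi_i\in W$, and convert back to a common $\sigma$ via Theorem~3.1. Your theorem numbers are shifted by one relative to the paper (your ``3.3'' is the paper's Theorem~3.1, your ``3.4'' is the paper's Theorem~3.3), but the content of each cited step is exactly what the paper uses, and you have in fact spelled out details that the paper leaves to ``similar to the previous theorems.''
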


\begin{proof}
$(\Rightarrow)$ Let $r_1,...,r_m$ are inadmissible with the same $\sigma$. $r_1,...,r_m$ are invalid in an $n$-characterizing $S4$ model $Ch_{S4}(n)$ by Theorem $3.1$. Let $W=\{\phi_i\in\Theta_n\mid \exists x\in Ch_{S4}(n)~~Ch_{S4}(n),x\vDash \phi_i\}$, then $W\subseteq \{\phi_i\in \Theta_n\mid i\in I_k\cup J_k\}$ for $k=1,...,m$. Similar to the previous theorems, the conditions $(1)-(3)$ hold for ${\mathcal M}(\Theta_n,W)$.\\
$(\Leftarrow)$ By $(1)$ and $(2)$, the rule $r'_k=\frac{\displaystyle\bigvee_{i\in W}\phi_i}{\displaystyle\bigvee_{j\in J_k}\phi_j}$ is invalid in ${\mathcal M}(\Theta_n,W)$, and by the proof of Theorem 3.4.10 \cite{Sergey1} there is an extension of ${\mathcal M}(\Theta_n,W)$ to an $n$-characterizing $S4$ model $Ch_{S4}(n)$ with effectively contractible definable valuation $S$ of the variables $p_1,...,p_n$ which coincides with the original valuation of ${\mathcal M}(\Theta_n,W)$ and $Ch_{S4}(n)\models \displaystyle\bigvee_{i\in W}\phi_i $ then $Ch_{S4}(n)\models \displaystyle\bigvee_{i\in I_k}\phi_i\vee\displaystyle\bigvee_{j\in J_k}\phi_j $. Since ${\mathcal M}(\Theta_n,W)\nvDash \displaystyle\bigvee_{j\in J_k}\phi_j $ and $Ch_{S4}(n)$ is an extension of ${\mathcal M}(\Theta_n,W)$ then $Ch_{S4}(n)\nvDash \displaystyle\bigvee_{j\in J_k}\phi_j$. Then $Ch_{S4}(n)\nvDash r_1,...,Ch_{S4}(n)\nvDash r_m$, therefore $r_1,...,r_m$ are inadmissible by the same $\sigma$ by Theorem $3.1$.
\end{proof}

\begin{theorem}\label{212}
Rules $r_1=\frac{\displaystyle\bigvee_{i\in I_1}\phi_i\vee \displaystyle\bigvee_{j\in J_1}\phi_j }{\displaystyle\bigvee_{j\in J_1}\phi_j},...,r_m=\frac{\displaystyle\bigvee_{i\in I_m}\phi_i\vee \displaystyle\bigvee_{j\in J_m}\phi_j }{\displaystyle\bigvee_{j\in J_m}\phi_j}$ are inadmissible for $S4$ with the same substitution $\sigma$ iff there is a set $W\subseteq \{\phi_i\in \Theta_n\mid i\in I_k\cup J_k\}$ for $k=1,...,m$ such that $r'_1=\frac{\displaystyle\bigvee_{\phi_i\in W}\phi_i}{\displaystyle\bigvee_{j\in J_1}\phi_j},...,r'_m=\frac{\displaystyle\bigvee_{\phi_i\in W}\phi_i}{\displaystyle\bigvee_{j\in J_m}\phi_j}$ are inadmissible for $S4$ with the same substitution $\sigma$.\\
\end{theorem}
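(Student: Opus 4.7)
My plan is to treat the two directions separately using Theorem 3.8 as the main engine; the key observation is that the proof of Theorem 3.8's $(\Leftarrow)$ direction already constructs rules of the exact form $\frac{\bigvee_{\phi_i\in W}\phi_i}{\bigvee_{j\in J_k}\phi_j}$ and shows they are invalid in the characterizing extension, so the present theorem essentially crystallizes that intermediate fact.

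For the $(\Leftarrow)$ direction I would argue by propositional weakening. Suppose $r'_1,\ldots,r'_m$ are inadmissible with a common substitution $\sigma$, so that $\vdash \sigma\bigl(\bigvee_{\phi_i\in W}\phi_i\bigr)$ and $\nvdash \sigma\bigl(\bigvee_{j\in J_k}\phi_j\bigr)$ for each $k$. Since $W\subseteq\{\phi_i\mid i\in I_k\cup J_k\}$, the formula $\bigvee_{\phi_i\in W}\phi_i \to \bigvee_{i\in I_k}\phi_i\vee\bigvee_{j\in J_k}\phi_j$ is a propositional tautology, whence $\vdash \sigma\bigl(\bigvee_{i\in I_k}\phi_i\vee\bigvee_{j\in J_k}\phi_j\bigr)$. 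Combined with $\nvdash \sigma\bigl(\bigvee_{j\in J_k}\phi_j\bigr)$ this says the same $\sigma$ rejects $r_k$, for each $k$.

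For the $(\Rightarrow)$ direction I would apply Theorem 3.8 to $r_1,\ldots,r_m$ to obtain a set $W$ meeting its conditions (1)--(3), and then reuse the characterizing extension $Ch_{S4}(n)$ of ${\mathcal M}(\Theta_n,W)$ built in the proof of Theorem 3.8's $(\Leftarrow)$ direction (which invokes Theorem 3.4.10 of \cite{Sergey1}). That construction guarantees $Ch_{S4}(n)\vDash \bigvee_{\phi_i\in W}\phi_i$ together with $Ch_{S4}(n)\nvDash \bigvee_{j\in J_k}\phi_j$ for every $k$, under a definable valuation $S$; these two facts say precisely that every $r'_k$ is invalid in $Ch_{S4}(n)$ under the same $S$. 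An application of Theorem 3.1 then yields a single substitution $\sigma$ (the one induced by $S$) rejecting every $r'_k$, as required.

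The main obstacle will be the invocation of Theorem 3.4.10 of \cite{Sergey1}: one needs that the extension from ${\mathcal M}(\Theta_n,W)$ to $Ch_{S4}(n)$ preserves global validity of $\bigvee_{\phi_i\in W}\phi_i$, that is, every world added during the extension still realizes some disjunct in $W$. This must be lifted verbatim from the cited construction rather than reproved here. Once that black box is accepted, the remainder of the argument is a mechanical transfer between the premises of the $r_k$'s and the $r'_k$'s.
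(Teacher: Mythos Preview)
Your $(\Leftarrow)$ direction matches the paper's (the paper simply calls it ``trivial''): weakening the premise from $\bigvee_{\phi_i\in W}\phi_i$ to $\bigvee_{i\in I_k\cup J_k}\phi_i$ preserves provability, so the same $\sigma$ rejects each $r_k$.

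Your $(\Rightarrow)$ direction is correct under the existential reading of the statement, but the paper takes a markedly more elementary route that you should know about. Instead of invoking Theorem~3.8 and the characterizing-model construction of \cite{Sergey1}, the paper argues purely syntactically and keeps the \emph{same} substitution $\sigma$ throughout. The idea is this: set $W=\bigcap_{k}(I_k\cup J_k)$. If $\phi_j\notin I_l\cup J_l$ for some $l$, then $\phi_i\to\neg\phi_j$ is a tautology for every $i\in I_l\cup J_l$, hence $\bigvee_{i\in I_l\cup J_l}\phi_i\to\neg\phi_j$ is a tautology; since $\vdash\sigma(\bigvee_{i\in I_l\cup J_l}\phi_i)$, we get $\vdash\neg\sigma(\phi_j)$. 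Therefore $\sigma(\phi_j)$ can be dropped from the premise of every $r_k$ without spoiling its provability, and iterating over all such $j$ leaves exactly $\bigvee_{\phi_i\in W}\phi_i$. The conclusions $\bigvee_{j\in J_k}\phi_j$ are untouched, so the very same $\sigma$ now rejects every $r'_k$.

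The trade-off: your semantic detour is heavier (it leans on the black box from \cite{Sergey1} and on Theorem~3.1 to manufacture a possibly new $\sigma$), whereas the paper's argument is a two-line syntactic trick that yields the stronger conclusion that the original $\sigma$ already witnesses inadmissibility of the $r'_k$, and it pins down $W$ explicitly as the intersection of the premise index sets.
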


\begin{proof}
$(\Leftarrow)$ is trivial.\\

$(\Rightarrow)$ Let $\phi_j\notin \{\phi_i\in \Theta_n\mid i\in I_l\cup J_l\} $ then for $i\in I_l\cup J_l$ we get $\vdash\phi_i\rightarrow \neg \phi_j $ then $\vdash\displaystyle\bigvee_{i\in I_l\cup J_l}\sigma(\phi_i)\rightarrow \neg \sigma(\phi_j) $ and since $\vdash\displaystyle\bigvee_{i\in I_l\cup J_l}\sigma(\phi_i)$ so $\vdash \neg \sigma(\phi_j)$. For $k=1,...,m$, since $\vdash\displaystyle\bigvee_{i\in I_k\cup J_k}\sigma(\phi_i)$ then $\vdash\displaystyle\bigvee_{i\in I_k\cup J_k-\{j\}}\sigma(\phi_i)$ therefore the rule $\frac{\displaystyle\bigvee_{i\in I_k\cup J_k-\{j\}}\sigma(\phi_i)}{\displaystyle\bigvee_{j\in J_k}\phi_j}$ is inadmissible for $S4$ with the  substitution $\sigma$.
\end{proof}

In the previous theorems, in the sets $W$ there must be appropriate formula $\phi_i$ to support conditions such as ${\mathcal M}(\Theta_n,W), \phi_j\vDash \phi_j$ for all $\phi_j\in W$. In the end of this section we study the construction and properties of these sets.\\

\begin{definition}\label{213}
$ Supp_1\subseteq {\mathcal P}(\Theta_n) $ is the smallest set where

$(1)$ If $C_1,...,C_m$ are clusters of ${\mathcal M}_n$, that is, $\forall x,y \in C_i~~~ \Theta_{\diamond}(x)=\Theta_{\diamond}(y)=\Theta_{\diamond}(C_i)$ for each $i=1,...,m$, and $X_i\subseteq C_i$ where $\displaystyle\bigcup_{x\in X_i}\Theta(x)=\Theta_{\diamond}(C_i)$ for each $i=1,...,m$, then $\displaystyle\bigcup_{i=1}^{m} X_i\in Supp_1$.\\

$(2)$ If $Y_i\subseteq W\in Supp_1$ and $X_i\subseteq C_i$ where $C_i$ is a cluster, $\forall y\in Y_i~~\Theta_{\diamond}(y)\subseteq \Theta_{\diamond}(C_i) $ and $\Theta_{\diamond}(C_i)=\displaystyle\bigcup_{y\in Y_i}\Theta(y)\cup\displaystyle \bigcup_{x\in X_i}\Theta(x)$ for $i=1,...,n$, then $W\cup\displaystyle\bigcup_{i=1}^{n} X_i\in Supp_1$.\\

$Supp_2\subseteq Supp_1$ is defined as $W\in Supp_2$ if in addition

$(3)$ For each $\mathcal{D}\subseteq W$ there is $z\in W$ such that $\Theta_{\diamond}(z)=\Theta(z)\cup \displaystyle\bigcup_{x\in \mathcal{D}}\Theta_{\diamond}(x)$.
\end{definition}

\begin{Ex}
 Let $W_1=\{55\}$, $W_2=\{46\}$, $W_3=\{55,46\}$, $W_4=\{55\}\cup\{13,14\}$

$W_5=\{30,31\}\cup\{21,23\}\cup\{13,14\}$, $W_6=\{21,55\}$, $W_7=\{21,30,55\}$,

$W_8=\{8,21,30,55\}$.\\
These sets are in $Supp_1$ and $W_1,W_2,W_6,W_8\in Supp_2$ but $W_3,W_4,W_5,W_7\notin Supp_2 $.\\
\end{Ex}

\begin{Rem}
If $n\leq 3$, the condition $(3)$ can be replaced with\\
$(4)$ $\exists x\in W$ where $\Theta(x)=\Theta_{\diamond}(x)$.\\
$(5)$ If $x,y\in W$ then $zRx$ and $zRy$ for some $ z\in W $ with $\Theta_{\diamond}(z)\subseteq \Theta(z)\cup \Theta_{\diamond}(x)\cup \Theta_{\diamond}(y)$ .\\

But for $n\geq 4$, let $W=\{x_1,x_2,x_3,y_1,y_2,y_3,z\}$ with $\Theta(x_i)=\Theta_{\diamond}(x_i)=\{p_i\},~i=1,2,3,~\Theta_{\diamond}(y_1)=\{p_1,p_2,p_4\},~\Theta_{\diamond}(y_2)=\{p_1,p_3,p_4\}, ~\Theta_{\diamond}(y_3)=\{p_2,p_3,p_4\},~\Theta_{\diamond}(z)=\{p_1,p_2,p_3,p_4\}$ and $\Theta(y_1)=\Theta(y_2)=\Theta(y_3)=\{p_4\}, \Theta(z)=\emptyset.$ Then $W\in Supp_1$ and the conditions $(4), (5)$ are true for $W$ but $(3)$ is false for $\mathcal{D}=\{x_1,x_2,x_3\}$.
\end{Rem}

\begin{theorem}
$(1)$ If $x\in W\in Supp_1$ and $W'=\{w\in W \mid xRw \}$ then $W'\in Supp_1$.\\

$(2)$ If $x\in W\in Supp_1$ and $W'=\{w\in W \mid xRw \}$ and $\Theta_\diamond(x)=\Theta(x)$ then $W'\in Supp_2$.\\

$(3)$ If $x\in W\in Supp_1$ and $\Theta_{\diamond}(x)=\Theta_{\diamond}(y)$, then $W\cup\{y\}\in Supp_1$.\\





$(4)$ If $W_1,W_2\in Supp_1$ then $W_1\cup W_2\in Supp_1$.\\


$(5)$~$W\in Supp_1$ iff ${\mathcal M}(\Theta_n,W), \phi_j\vDash \phi_j$ for all $\phi_j\in W$.\\

$(6)$ $W\in Supp_2$ iff

$(i)$~${\mathcal M}(\Theta_n,W), \phi_j\vDash \phi_j$ for all $\phi_j\in W$ and

$(ii)$ For each subset $\mathcal(D)$ of $W$ there exists $\phi_j\in W$ such that
\begin{center}
$\theta_{\diamond}(\phi_j)=\theta(\phi_j)\cup\displaystyle\bigcup_{\phi\in{\mathcal D}}\theta_{\diamond}(\phi)$.
\end{center}

$(7)$~$r=\frac{I,J}{J}$ is invalid iff there is $W\in Supp_1$ such that $W\cap I\neq \emptyset$ and $W\subseteq I\cup J$.\\

$(8)$~$r=\frac{I,J}{J}$ is inadmissible iff there is $W\in Supp_2$ such that $W\cap I\neq \emptyset$ and $W\subseteq I\cup J$.\\

$(9)$~If $r=\frac{I,J}{J}$ is valid then $r=\frac{I,J}{J}$ is admissible.\\

$(10)$~If $r=\frac{I,J}{J}$ is invalid then $r'=\frac{x,I,J}{x,J}$ is inadmissible for some $x$ with $\theta(x)=\theta_{\diamond}(x)$. \\

$(11)$ If $r_1=\frac{I_1,J_1}{J_1},...,r_k=\frac{I_k,J_k}{J_k}$ are invalid in the same $S4$-model iff there is $W\in Supp_1$ such that $W\cap I_i\neq \emptyset$ and $W\subseteq I_i\cup J_i$ for $i=1,...,k$.\\

$(12)$ If $r_1=\frac{I_1,J_1}{J_1},...,r_k=\frac{I_k,J_k}{J_k}$ are inadmissible with the same $\sigma$ iff there is $W\in Supp_2$ such that $W\cap I_i\neq \emptyset$ and $W\subseteq I_i\cup J_i$ for $i=1,...,k$.\\

$(13)$~If $r_1=\frac{I_1,J_1}{J_1},...,r_k=\frac{I_k,J_k}{J_k}$ are invalid in the same $S4$-model then $r'_1=\frac{x,I_1,J_1}{x,J_1},...,r'=\frac{x,I_1,J_1}{x,J_1}$ are inadmissible with the same $\sigma$ for some $x$ with $\theta(x)=\theta_{\diamond}(x)$. \\

\end{theorem}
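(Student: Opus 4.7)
The plan is to prove the semantic characterizations (5) and (6) first, and then derive all the other parts with minimal further work, since (7)--(8), (11)--(12) are direct combinations of (5)--(6) with Theorems~3.6, 3.8, 3.9, 3.10, 3.11, and the remaining parts are structural consequences.

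For (5), I will argue both directions by induction. For $(\Rightarrow)$, induct on the construction of $W\in Supp_1$. In the base case (rule~(1)), each $\phi\in X_i$ satisfies its positive and negative propositional conjuncts by the definition of $\nu^M$; its positive $\diamond$-conjuncts are witnessed in $X_i$ itself because $\bigcup_{x\in X_i}\Theta(x)=\Theta_\diamond(C_i)=\Theta_\diamond(\phi)$; its negative $\diamond$-conjuncts hold because every successor $w$ satisfies $\Theta(w)\subseteq\Theta_\diamond(w)\subseteq\Theta_\diamond(\phi)$. The inductive step (rule~(2)) is analogous, with witnesses for new points drawn from $Y_i\cup X_i$, and witnesses for old points unaffected by enlarging the model. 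For $(\Leftarrow)$, I stratify $W$ by its clusters $C_1,C_2,\dots$ (equivalence classes of equal $\Theta_\diamond$) ordered so that $\Theta_\diamond(C_j)\subseteq\Theta_\diamond(C_k)$ implies $j\le k$; this ordering exists because $\subseteq$ is a partial order. I build $W\cap C_1$ via rule~(1), then iteratively add $W\cap C_k$ via rule~(2), taking $X_k:=W\cap C_k$ and $Y_k:=\{y\in W\cap(C_1\cup\cdots\cup C_{k-1}):\Theta_\diamond(y)\subseteq\Theta_\diamond(C_k)\}$. The required equality $\Theta_\diamond(C_k)=\bigcup_{y\in Y_k}\Theta(y)\cup\bigcup_{x\in X_k}\Theta(x)$ holds because every $\phi\in W\cap C_k$ semantically satisfies $\diamond p$ for each $p\in\Theta_\diamond(C_k)$, and its witnesses necessarily lie in $Y_k\cup X_k$.

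Part (6) is immediate from (5): the extra clause defining $Supp_2$ is exactly clause~(ii). Parts (1)--(4) I prove via the semantic characterization. For (1), the successor set $W'$ is closed under $R$-successors in $\mathcal{M}(\Theta_n,W)$, so each $\phi_j\in W'$ still has all its witnesses inside $W'$. For (2), under the additional hypothesis $\Theta_\diamond(x)=\Theta(x)$, the point $x$ itself is a universal witness for the definability clause: for any $\mathcal D\subseteq W'$, every $d\in\mathcal D$ satisfies $\Theta_\diamond(d)\subseteq\Theta_\diamond(x)=\Theta(x)$, so $\Theta_\diamond(x)=\Theta(x)\cup\bigcup_{d\in\mathcal D}\Theta_\diamond(d)$. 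For (3), $y$ and $x$ share the same set of $R$-successors, so $x$'s witnesses serve for $y$. For (4), witnesses from each $W_i$ remain in the enlarged $\mathcal{M}(\Theta_n,W_1\cup W_2)$.

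For the application parts, (7) and (8) are direct rephrasings of Theorems~3.6 and 3.8 using (5) and (6); (11) and (12) analogously use Theorems~3.10 and 3.11. Part (9) follows from $Supp_2\subseteq Supp_1$: if $r$ is valid then no $Supp_1$ witness exists, so no $Supp_2$ witness exists either. For (10), given a $Supp_1$ witness $W$ for the invalidity of $r$, I adjoin a single point $x$ with $\Theta(x):=\Theta_\diamond(x):=\bigcup_{w\in W}\Theta_\diamond(w)$. Then $x R w$ for every $w\in W$, the semantic condition at $x$ holds (its own $\Theta(x)=\Theta_\diamond(x)$ supplies all $\diamond$-witnesses, and every successor has $\Theta\subseteq\Theta_\diamond(x)$), and by the argument in (2) the element $x$ is a universal witness for the $Supp_2$ condition, so $W\cup\{x\}\in Supp_2$. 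Setting $J':=J\cup\{x\}$ then makes $r'=\frac{I,J,x}{J,x}$ inadmissible by (8). Part (13) works identically with the multiple-rule witness from (11).

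The main obstacle is the $(\Leftarrow)$ direction of (5), where I must convert a purely semantic property into an explicit sequence of applications of rules~(1) and~(2); the delicate point is verifying the side condition $\Theta_\diamond(y)\subseteq\Theta_\diamond(C_k)$ on the set $Y_k$ chosen at each inductive stage, which requires the stratification of clusters to respect the partial order induced by $\subseteq$ on $\Theta_\diamond$.
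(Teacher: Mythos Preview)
Your proposal is correct and the overall logical skeleton matches the paper's, but you organise the argument differently in two places worth noting.

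First, you prove the semantic characterisation (5) \emph{first} and then read off parts (1)--(4) as easy consequences about the model $\mathcal{M}(\Theta_n,W)$. The paper does the reverse: it establishes (1) and (3) by a direct structural induction on the inductive definition of $Supp_1$ (the two clauses of Definition~3.3), and only afterwards proves (5). Your route is arguably cleaner---once (5) is in hand, closure under successor-sets, same-cluster enlargements, and unions become one-line semantic observations---whereas the paper's inductive arguments for (1) and (3) are more hands-on but avoid needing the full strength of (5) to get off the ground. Your treatment of $(\Leftarrow)$ in (5), building $W$ cluster-by-cluster along a linear extension of the $\Theta_\diamond$-inclusion order, is essentially the same idea as the paper's somewhat terser inductive argument.

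Second, your construction in (10) differs from the paper's. You adjoin to the whole of $W$ a single new point $x$ with $\Theta(x)=\Theta_\diamond(x)=\bigcup_{w\in W}\Theta_\diamond(w)$, sitting strictly below everything, and verify directly that $W\cup\{x\}\in Supp_2$. The paper instead first picks $z\in W\cap I$, passes to the successor set $W'=\{w\in W:zRw\}$ (an instance of part (1)), then adjoins $x$ with $\Theta(x)=\Theta_\diamond(x)=\Theta_\diamond(z)$ via part (3), and finally applies part (2) to conclude $W'\cup\{x\}\in Supp_2$. Both constructions work; yours is more direct, while the paper's shows how (10) falls out of the earlier structural closure properties already proved. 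The same remark applies to (13).

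One minor point: your theorem cross-references (3.10, 3.11) do not match the paper's numbering; the relevant results are Theorems~3.4, 3.6, 3.7 and 3.8.
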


\begin{proof}

$(1)$~$(i)$ Let $C_1,...,C_m$ be clusters of ${\mathcal M}_n$ and $X_i\subseteq C_i$ where $\displaystyle\bigcup_{x\in X_i}\Theta(x)=\Theta_{\diamond}(C_i)$ for each $i=1,...,m$, and $x\in W=\displaystyle\bigcup_{i=1}^{m} X_i$. Let $x\in X_i\subseteq C_i$ Then $W'=\displaystyle\bigcup_{\forall y \in X_j xRy } X_j\in Supp_1$. \\

$(ii)$ Let $Y_i\subseteq W_1\in Supp_1$ and $X_i\subseteq C_i$ where $C_i$ is a cluster, $\forall y\in Y_i~~\Theta_{\diamond}(y)\subseteq \Theta_{\diamond}(C_i) $ and $\Theta_{\diamond}(C_i)=\displaystyle\bigcup_{y\in Y_i}\Theta(y)\cup\displaystyle \bigcup_{x\in X_i}\Theta(x)$, for $i=1,...,n$, and $x\in W=W_1\cup\displaystyle\bigcup_{i=1}^{n} X_i$. Then $W'=W'_1\cup\displaystyle\bigcup_{\forall y \in X_j xRy } X_j$ and by induction hypothesis $W'_1\in Supp_1$, and if $x$ is in some $C_i$, then $\forall y\in Y_i~~xRy$ and $Y_i\subseteq W'_1$, so $W'\in Supp_1$  by Definition $3.3(2)$.\\

$(2)$ By $(1), W'\in Supp_1$ and if $\mathcal{D}\subseteq W'$ and $y\in\mathcal{D}$ then $\Theta_{\diamond}(y)\subseteq\Theta_{\diamond}(x)$ then $\displaystyle \bigcup_{y\in \mathcal{D}}\Theta_{\diamond}(y)\subseteq \Theta_{\diamond}(x)=\Theta(x)$, therefore $\Theta_{\diamond}(x)=\Theta(x)\cup \displaystyle \bigcup_{y\in \mathcal{D}}\Theta_{\diamond}(y)$. \\

$(3)$~$(i)$ Let $C_1,...,C_m$ be clusters of ${\mathcal M}_n$ and $X_i\subseteq C_i$ where $\displaystyle\bigcup_{x\in X_i}\Theta(x)=\Theta_{\diamond}(C_i)$ for each $i=1,...,m$, and $x\in W=\displaystyle\bigcup_{i=1}^{m} X_i$. Let $x\in X_j\subseteq C_j$ and $\Theta_{\diamond}(x)=\Theta_{\diamond}(y)$ Then $X_j\cup \{y\}\subseteq C_j$ and $\displaystyle\bigcup_{x\in X_j\cup \{y\}}\Theta(x)=\Theta_{\diamond}(C_j)$. Then $W\cup\{y\}=\displaystyle\bigcup_{j\neq i=1}^{m} X_i\cup (X_j\cup \{y\})\in Supp_1$. \\

$(ii)$  Let $Y_i\subseteq W_1\in Supp_1$ and $X_i\subseteq C_i$ where $C_i$ is a cluster, $\forall y\in Y_i~~\Theta_{\diamond}(y)\subseteq \Theta_{\diamond}(C_i) $ and $\Theta_{\diamond}(C_i)=\displaystyle\bigcup_{y\in Y_i}\Theta(y)\cup\displaystyle \bigcup_{x\in X_i}\Theta(x)$, for $i=1,...,n$, and $x\in W=W_1\cup\displaystyle\bigcup_{i=1}^{n} X_i$. If $x\in W_1$, then by induction hypothesis $W_1\cup\{y\}\in Supp_1$, so $ W\cup \{y\} =(W_1\cup\{y\})\cup \displaystyle\bigcup_{i=1}^{n} X_i$  by Definition $3.3(2)$. If $x\in X_j$, then $X_j\cup\{y\}\subseteq C_j$ and $\displaystyle \bigcup_{w\in X_j}\Theta(w)\subseteq \displaystyle \bigcup_{w\in X_j\cup\{y\}}\Theta(w)\subseteq \Theta_{\diamond}(C_j) $ thus $\Theta_{\diamond}(C_j)=\displaystyle\bigcup_{w\in Y_j}\Theta(w)\cup\displaystyle \bigcup_{w\in X_j\cup\{y\}}\Theta(w)$. Therefore $W\cup\{y\} =W_1\cup (X_j\cup\{y\})\cup\displaystyle\bigcup_{j\neq i=1}^{n} X_i\in Supp_1$ by Definition $3.3(2)$. \\



$(5)$~$(\Leftarrow)$ By induction on $W$. Let $W=\displaystyle\bigcup_{i=1}^{n} X_i$ where $\emptyset\neq X_i\subseteq C_i$ and $C_i$ is a cluster for $i=1,...,n$. $\displaystyle\bigcup_{x\in X_i} \Theta(x)\subseteq \Theta_{\diamond}(C_i)$ is trivial. If $p_k\in\Theta_{\diamond}(C_i)$ and $\varphi_j\in X_i\subseteq C_i$, since $\varphi_j\vDash\varphi_j$ and $p_k\in\Theta_{\diamond}(\varphi_j)=\Theta_{\diamond}(C_i) $ then $\varphi_j\vDash \diamond p_k$. Let $\varphi_j R \varphi_{j'}$ and $\varphi_{j'}\vDash p_k$ then $p_k\in \Theta(\varphi_{j'})$. If $W'=\emptyset$ then $\varphi_{j'}\in X_i$ thus $p_k\in\displaystyle\bigcup_{x\in X_i} \Theta(x)$, so $\displaystyle\bigcup_{x\in X_i} \Theta(x)=\Theta_{\diamond}(C_i)$ and therefore $W\in Supp_1$ by Definition $2(1)$. If $W'\neq\emptyset$ and $X_i\subseteq W-W'$ and $Y_i=\{\varphi_{j'}\in W'\mid \varphi_j\in X_i \}$, then $\varphi_{j'}\in X_i\cup Y_i$ and $p_k\in\displaystyle\bigcup_{x\in X_i} \Theta(x)\cup\displaystyle\bigcup_{y\in Y_i} \Theta(y)$, so $\displaystyle\bigcup_{x\in X_i} \Theta(x)\cup\displaystyle\bigcup_{y\in Y_i} \Theta(y)=\Theta_{\diamond}(C_i)$. Since the hypothesis ${\mathcal M}(\Theta_n,W'),\varphi_j\vDash \varphi_j$ is true for all $\varphi_j\in W'$, by induction hypothesis $W'\in Supp_1$ and therefore $W=W'\cup\displaystyle\bigcup_{X_i\subseteq W-W'}X_i\in Supp_1$ by Definition $3.3(2)$

$(\Rightarrow)$is trivial.

$(6)$ It is trivial by $(5)$ and Definition $3.3(3).$

$(7)$ By the part $(5)$ and Theorem $3.4$.\\

$(8)$ By the part $(6)$ and Theorem $3.6$. \\

$(9)$ If $r=\frac{I,J}{J}$ is inadmissible then by the part $(8)$, there is $W\in Supp_2$ such that $W\cap I\neq \emptyset$ and $W\subseteq I\cup J$. Since $Supp_2\subseteq Supp_1$ then $W\in Supp_1$ so $r$ is invalid by the part $(7)$.\\

$(10)$~If $r=\frac{I,J}{J}$ is invalid then by the part $(7)$, there is $W\in Supp_1$ such that $W\cap I\neq \emptyset$ and $W\subseteq I\cup J$. Let $z\in W\cap I$ and $W'=\{w\in W\mid zRw\}$ and $\theta(x)=\theta_{\diamond}(x)=\theta_{\diamond}(z)$.  By the parts $(2)$ and $(3)$, $W'\cup \{x\}\in Supp_2$, $(W'\cup \{x\})\cap I\neq \emptyset$ and $(W'\cup \{x\})\subseteq (I\cup \{x\})\cup J$. Then by the part $(8)$ $r'=\frac{x,I,J}{x,J}$ is inadmissible.\\

$(11)-(13)$ similar to $(7)-(10)$ by Theorems $3.8$ and $3.9$.
\end{proof}

\section{substitutions for  rules in reduced normal forms}
Suppose $S:{\mathcal P}(\Theta_n)\rightarrow {\mathcal P}(\Sigma)$ with $S(A)=\{\sigma~|~\nvdash_{\sigma}r~~~for~ each~r~in~A\}$. The following theorem shows properties of $S$ and the way of composing $S(A)$ to its components in the form of $S(\frac{W}{W-\{i_1\}},...,\frac{W}{W-\{i_n\}})$ where $W=\{i_1,...,i_n\}$ according to the parts $(7)$ and $(8)$ of the theorem. Some applications of the theorem is given by figures $(4)$, $(5)$ and $(6)$ and example $(4.1)$. The compositions are done using the actions a $+J$ and $-J$ in the parts $(12)$ and $(13)$, decreasing premisses and adding demands $(9)$ and $(11)$, and simplifications $(4)$, $(5)$ and $(6)$. The method introduced in the theorem and the following examples justify the approach of the paper in considering substitutions rejecting a set of rules instead of a single rule as usual and generalizing the results in the previous sections.

\begin{theorem}\label{315}
 The above function $S$ satisfies the following:\\

$(1)$~If $A\subseteq B$ then $S(B)\subseteq S(A)$.\\

$(2)$~$S(A)\cap S(B)=S(A\cup B)$.\\

$(3)$~$S(A)\cup S(B)\subseteq S(A\cap B)$.\\

$(4)$~ $S(\{\frac{W_1}{J_1},\frac{W_2}{J_2}\}\cup A)=S(\{\frac{W_2}{J_2}\}\cup A)$ if $J_1\subseteq J_2, W_2\subseteq W_1.$\\

$(5)$ If $J_i\subseteq J'_i, W_i\subseteq W'_i$ and $\frac{W'_i}{W_i}$ and $\frac{J'_i}{J_i}$ are admissible for $i=1,...,m$ then $S(\frac{W_1}{J_1},...,\frac{W_m}{J_m})=S(\frac{W'_1}{J'_1},...,\frac{W'_m}{J'_m})$ (special cases of $(10)$ and $(11)$).\\

$(6)$ If $W=\displaystyle\bigcap_{i=1}^{m}W_i $ then $S(\{\frac{W_1}{J_1},...,\frac{W_m}{J_m}\})=S(\{\frac{W}{J_1\cap W},...,\frac{W}{J_m\cap W}\})$ then $S(\{\frac{W_1}{J_1},...,\frac{W_m}{J_m}\})=\varnothing$ if $J_1\subseteq W$ or ... or $J_m\subseteq W$\\

$(7)$ If $W=\{i_1,...,i_n\}$ and $J_1,...,J_m\subseteq W$ then $S(\frac{W}{W-\{i_1\}},...,\frac{W}{W-\{i_n\}})\subseteq S(\frac{W}{J_1},...,\frac{W}{J_m})$\\

$(8)$ If $W_1=\{i_1,...,i_n\}$ and $W_2=\{j_1,...,j_m\}$ and $W_1\neq W_2$ then\\

$S(\frac{W_1}{W_1-\{i_1\}},...,\frac{W_1}{W_1-\{i_n\}})\cap S(\frac{W_2}{W_2-\{j_1\}},...,\frac{W_2}{W_2-\{j_m\}})=\emptyset$\\

$(9)$~If $J_1,...,J_n\varsubsetneq W_1\varsubsetneq W_2\subseteq \Theta(r)$ then \\

$(i)$ $S(\{\frac{W_2}{J_1},...,\frac{W_2}{J_m}\})=S(\{\frac{W_1}{J_1},...,\frac{W_1}{J_m}\})\bigcup S(\{\frac{W_2}{W_1}\})$.\\

$(ii)$ $S(\{\frac{W_1}{J_1},...,\frac{W_1}{J_m}\})\cap S(\{\frac{W_2}{W_1}\})=\emptyset$.\\

$(iii)$ $S(\{\frac{W_1}{J_1},...,\frac{W_1}{J_m}\})\subseteq S(\{\frac{W_2}{J_1},...,\frac{W_2}{J_m}\})$.\\

$(10)$ If $J_i\varsubsetneq W_i\subseteq W'_i\subseteq \Theta(r)$ for $i=1,...,m$ then \\

$(i)$ $S(\{\frac{W'_1}{J_1},...,\frac{W'_m}{J_m}\})=S(\{\frac{W_1}{J_1},...,\frac{W_m}{J_m}\})\bigotimes S(\{\frac{W'_1}{W_1},...,\frac{W'_m}{W_m}\}):=\displaystyle\bigcup_{\alpha\in 2^{m}}S_{\alpha}$ where\\

$S_{\alpha}=S(\{r_1,...,r_m\})$ and
\[ r_i = \begin{cases} \frac{W_i}{J_i} & \quad \text{if } \alpha(i-1)=0\\ \frac{W'_i}{W_i} & \quad \text{if } \alpha(i-1)=1\\ \end{cases} \]
for $i=1,...,m$.\\

$(ii)$~$S(\alpha)\cap S(\beta)=\emptyset$ if $\alpha\neq\beta.$\\

$(iii)$~ $S(\{\frac{W_1}{J_1},...,\frac{W_m}{J_m}\})\cap S(\{\frac{W'_1}{W_1},...,\frac{W'_m}{W_m}\})=\emptyset$.\\

 $(iv)$~$S(\{\frac{W_1}{J_1},...,\frac{W_m}{J_m}\})\subseteq S(\{\frac{W'_1}{J_1},...,\frac{W'_m}{J_m}\})$.\\

$(11)$~If $J_i\subseteq J'_i\subseteq W_i\subseteq \Theta_n(r)$ for $i=1,...,m$ then \\

$(i)$~$S(\{\frac{W_1}{J_1},...,\frac{W_m}{J_m}\})=S(\{\frac{W_1}{J'_1},...,\frac{W_m}{J'_m}\})\bigotimes S(\{\frac{J'_1}{J_1},...,\frac{J'_m}{J_m}\}):=\displaystyle\bigcup_{\alpha\in 2^{m}}S_{\alpha}$ where $S_{\alpha}=S(\{r_1,...,r_m\})$ and
\[ r_i = \begin{cases} \frac{W_i}{J'_i} & \quad \text{if } \alpha(i-1)=0\\ \frac{J'_i}{j_i} & \quad \text{if } \alpha(i-1)=1\\ \end{cases} \] for $i=1,...,m$.\\

$(ii)$ $S(\alpha)\cap S(\beta)=\emptyset$ if $\alpha\neq\beta.$\\

$(iii)$ $S(\{\frac{W_1}{J'_1},...,\frac{W_m}{J'_m}\})\cap S(\{\frac{J'_1}{J_1},...,\frac{J'_m}{J_m}\})$.\\

$(iv)$ $S(\{\frac{W_1}{J'_1},...,\frac{W_m}{J'_m}\})\subseteq S(\{\frac{W_1}{J_1},...,\frac{W_m}{J_m}\})$.\\

$(12)$ If $J_i\subsetneq W_i\subseteq \Theta(r)$  for $i=1,...,m$, and $\{\frac{W_1}{J_1},...,\frac{W_m}{J_m}\}-J=\{\frac{W_1-J}{J_1-J},...,\frac{W_m-J}{J_m-J}\}$ then\\

$(i)$ $S(\{\frac{W_1}{J_1},...,\frac{W_m}{J_m}\})=S(\{\frac{W_1}{J_1},...,\frac{W_m}{J_m}\}-J)\cup \displaystyle\bigcup_{i=1}^{m} (S(\{\frac{W_1}{J_1},...,\frac{W_m}{J_m}\})\cap S(\{\frac{W_i}{W_i-J}\}))$\\

~~~~~~~~~~~~~~~~~~~~~~~~~~~$=S(\{\frac{W_1}{J_1},...,\frac{W_m}{J_m}\}-J)\cup\displaystyle\bigcup_{i=1}^{m} S(\{\frac{W_1}{J_1},...,\frac{W_m}{J_m},\frac{W_i}{W_i-J}\}) $.\\

$(ii)$ $S(\{\frac{W_1}{J_1},...,\frac{W_m}{J_m}\}-J)\cap S(\{\frac{W_1}{J_1},...,\frac{W_m}{J_m},\frac{W_i}{W_i-J}\})= \emptyset$.\\

$(iii)$ $S(\{\frac{W_1}{J_1},...,\frac{W_m}{J_m}\}-J)\subseteq S(\{\frac{W_1}{J_1},...,\frac{W_m}{J_m}\})$.\\

$(13)$ If $J\subseteq W_i\subseteq \Theta(r)$  for $i=1,...,m$, and $\{\frac{W_1}{J_1},...,\frac{W_m}{J_m}\}+J=\{\frac{W_1}{J_1\cup J},...,\frac{W_m}{J_m\cup J}\}$ then\\

$(i)$ $S(\{\frac{W_1}{J_1},...,\frac{W_m}{J_m}\})=S(\{\frac{W_1}{J_1},...,\frac{W_m}{J_m}\}+J)\bigotimes S(\{\frac{J_1\cup J}{J_1},...,\frac{J_m\cup J}{J_m}\}):=\displaystyle\bigcup_{\alpha\in 2^{m}}S_{\alpha}$ where
$S_{\alpha}=S(\{r_1,...,r_m\})$ and
\[ r_i = \begin{cases} \frac{W_i}{J_i\cup J} & \quad \text{if } \alpha(i-1)=0\\ \frac{J_i\cup J}{J_i} & \quad \text{if } \alpha(i-1)=1\\ \end{cases} \] for $i=1,...,m$.\\

$(ii)$ $S(\alpha)\cap S(\beta)=\emptyset$ if $\alpha\neq\beta.$\\

$(iii)$ $S(\{\frac{W_1}{J_1},...,\frac{W_m}{J_m}\}+J)\cap S(\{\frac{J_1\cup J}{J_1},...,\frac{J_m\cup J}{J_m}\})=\emptyset$.\\

$(iv)$ $S(\{\frac{W_1}{J_1},...,\frac{W_m}{J_m}\}+J)\subseteq S(\{\frac{W_1}{J_1},...,\frac{W_m}{J_m}\})$.\\
\end{theorem}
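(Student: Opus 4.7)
The plan is to unpack each of the thirteen assertions directly from the definition: $\sigma\in S(A)$ iff for every $r=\frac{I}{J}\in A$ one has $\vdash\sigma(\bigvee_{i\in I}\phi_i)$ and $\nvdash\sigma(\bigvee_{j\in J}\phi_j)$. Two ambient properties of the reduced-normal-form disjuncts will drive every argument. \emph{Monotonicity}: if $A\subseteq B$ then $\vdash\sigma(\bigvee_{A}\phi_i)\to\sigma(\bigvee_{B}\phi_i)$, so $\vdash\sigma(A)$ forces $\vdash\sigma(B)$ and $\nvdash\sigma(B)$ forces $\nvdash\sigma(A)$. \emph{Exclusivity}: distinct disjuncts $\phi_i,\phi_j\in\Theta_n$ are mutually contradictory, whence $\sigma(\bigvee_A\phi_i)\wedge\sigma(\bigvee_B\phi_j)$ is provably equivalent to $\sigma(\bigvee_{A\cap B}\phi_i)$.

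Parts (1)--(4) follow immediately from the set-theoretic form of $S$ and Monotonicity. Part (5) replaces Monotonicity by admissibility: from $\vdash\sigma(W'_i)$ and admissibility of $\frac{W'_i}{W_i}$ one deduces $\vdash\sigma(W_i)$, and dually admissibility of $\frac{J'_i}{J_i}$ yields $\nvdash\sigma(J_i)\Rightarrow\nvdash\sigma(J'_i)$; combined with the reverse implications coming from $W_i\subseteq W'_i$ and $J_i\subseteq J'_i$, this gives the two-way equivalence. Part (6) is the first essential use of Exclusivity: from $\vdash\sigma(W_i)$ for every $i$ one obtains $\vdash\bigwedge_i\sigma(W_i)\equiv\sigma(W)$; conversely, if both $\vdash\sigma(W)$ and $\vdash\sigma(J_k)$ held one would get $\vdash\sigma(J_k\cap W)$, so $\nvdash\sigma(J_k\cap W)$ forces $\nvdash\sigma(J_k)$. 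The vacuous-$S$ corollary is the observation that when $W$ is contained in some $J_k$ the induced rule $\frac{W}{J_k\cap W}=\frac{W}{W}$ is trivially admissible.

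Parts (7)--(8) are the crux. For (7), if $\sigma$ rejects each $\frac{W}{W\setminus\{i_k\}}$, then $\vdash\sigma(W)$ and $\nvdash\sigma(W\setminus\{i_k\})$ for every $k$; since each $J_l\subsetneq W$ is contained in some $W\setminus\{i\}$, Monotonicity yields $\nvdash\sigma(J_l)$. For (8), suppose $\sigma$ lies in both families. Exclusivity gives $\vdash\sigma(W_1\cap W_2)$; since $W_1\ne W_2$, WLOG $W_1\not\subseteq W_2$, so there is $i\in W_1$ with $W_1\cap W_2\subseteq W_1\setminus\{i\}$, and Monotonicity then gives $\vdash\sigma(W_1\setminus\{i\})$, contradicting $\sigma\in S(\frac{W_1}{W_1\setminus\{i\}})$.

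Parts (9)--(13) are systematic case analyses on a binary dichotomy. The template is: identify a formula $\psi$ whose provability discriminates the components, split on $\vdash\sigma(\psi)$ versus $\nvdash\sigma(\psi)$, and match each case to exactly one component rule-set. In (9) the discriminator is $\sigma(W_1)$; in (10) and (13) there are $m$ independent discriminators $\sigma(W_i)$ respectively $\sigma(J_i\cup J)$, producing the $2^m$ pieces indexed by $\alpha$; in (11) it is $\sigma(J'_i)$; in (12) it is $\sigma(W_i-J)$. Disjointness of the components follows because distinct $\alpha$'s impose contradictory provability requirements on a common formula, and each ``$\subseteq$'' statement comes from Monotonicity applied branch by branch. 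The subtlest step is (12)(iii), namely $S(\{r_i\}-J)\subseteq S(\{r_i\})$: to obtain $\nvdash\sigma(J_i)$ from $\nvdash\sigma(J_i-J)$ one notes $\sigma(J_i)\equiv\sigma(J_i-J)\vee\sigma(J_i\cap J)$, so the argument must exclude $\vdash\sigma(J_i\cap J)$ by combining the global hypothesis $\vdash\sigma(W_i-J)$ with Exclusivity. Pinning down the correct hypothesis on $J$ (the natural reading being $J\cap J_i=\emptyset$, under which $J_i-J=J_i$ and the operation is well-behaved) is the principal obstacle.
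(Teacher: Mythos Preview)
The paper states Theorem~4.1 without proof, so there is no ``paper's own proof'' to compare against; your outline is essentially the argument the authors leave to the reader. Your two driving lemmas (Monotonicity and Exclusivity) are exactly right, and your treatment of (1)--(11) and (13) is correct.

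The one genuine gap is in your handling of (12)(iii). You decompose $\sigma(J_i)\equiv\sigma(J_i\!-\!J)\vee\sigma(J_i\cap J)$ and then try to infer $\nvdash\sigma(J_i)$ from $\nvdash\sigma(J_i\!-\!J)$ together with $\nvdash\sigma(J_i\cap J)$. That inference is invalid: unprovability of each disjunct does not entail unprovability of the disjunction. Your fallback hypothesis $J\cap J_i=\varnothing$ is also not what the paper intends --- Remark~4.2 and Example~4.1 apply $-J$ precisely when $J=\{i\}$ meets some $J_k$. The fix is to use Exclusivity in the other direction: assume $\vdash\sigma(J_i)$ for contradiction; since you already have $\vdash\sigma(W_i\!-\!J)$, Exclusivity gives $\vdash\sigma\bigl(J_i\cap(W_i\!-\!J)\bigr)$. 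Because $J_i\subsetneq W_i$ by hypothesis, $J_i\cap(W_i\!-\!J)=J_i\!-\!J$, so $\vdash\sigma(J_i\!-\!J)$, contradicting the assumption $\sigma\in S(\{\tfrac{W_k-J}{J_k-J}\}_k)$. No extra hypothesis on $J$ is needed.
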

\begin{theorem}
If $J_i\subseteq W_i\subseteq \Theta_n(r)$ for $i=1,...,m$, then $S(\{\frac{W_1}{J_1},...,\frac{W_m}{J_m}\})\neq \emptyset$ iff
there is $W \in Supp_2$ such that  $J_i\subsetneq W\subseteq W_i$, for $i=1,...,m$.
\end{theorem}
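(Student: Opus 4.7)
The statement is essentially a repackaging of Theorem~3.7(12) (equivalently Theorem~3.11): the substitution-set $S(\{\tfrac{W_1}{J_1},\ldots,\tfrac{W_m}{J_m}\})$ being nonempty is the same as the rules $r_i=\tfrac{W_i}{J_i}$ being simultaneously inadmissible by a common $\sigma$, which that theorem characterizes in terms of a witness in $Supp_2$. My plan is therefore to translate between the two formulations. First I would rewrite each rule as $r_i=\tfrac{I_i,J_i}{J_i}$ with $I_i:=W_i\setminus J_i$ (disjoint from $J_i$), so that $W_i=I_i\cup J_i$.

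For the $(\Leftarrow)$ direction, assume a $W\in Supp_2$ with $J_i\subsetneq W\subseteq W_i$ is given. The strict inclusion yields an element $x\in W\setminus J_i$, and $W\subseteq W_i$ places it in $W_i\setminus J_i=I_i$, so $W\cap I_i\neq\emptyset$; meanwhile $W\subseteq I_i\cup J_i$ is just $W\subseteq W_i$. The hypotheses of Theorem~3.7(12) are met, hence the family $r_1,\ldots,r_m$ is simultaneously inadmissible by some $\sigma$, so $\sigma\in S(\{\tfrac{W_1}{J_1},\ldots,\tfrac{W_m}{J_m}\})$.

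For the $(\Rightarrow)$ direction, suppose $\sigma\in S(\{\tfrac{W_1}{J_1},\ldots,\tfrac{W_m}{J_m}\})$. Applying Theorem~3.7(12) I obtain a set $W_0\in Supp_2$ with $W_0\subseteq W_i$ and $W_0\cap I_i\neq\emptyset$ for every $i$; equivalently, $W_0\subseteq W_i$ and $W_0\not\subseteq J_i$. From the condition $W_0\not\subseteq J_i$, strictness $W_0\neq J_i$ is automatic. It remains to arrange that $J_i\subseteq W_0$ for each $i$. For this, I would pass to the underlying $S4$-model realizing $\sigma$ and invoke the closure properties proved for $Supp_2$ in Theorem~3.7(2)--(4): elements of $J_i$ that share a $\Theta_{\diamond}$-pattern with members of $W_0$ can be adjoined without leaving $Supp_2$, and since each $J_i$ is already contained in $W_i$ the enlargement stays inside every $W_i$ provided that the added element lies in $\bigcap_i W_i$; otherwise one absorbs it into the characteristic model construction used in the proof of Theorem~3.11 so that the new witness $W$ captures, for each $i$, every $\phi_j$ with $j\in J_i$ as a world in the model. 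The resulting $W$ satisfies $J_i\subseteq W\subseteq W_i$, and strictness is inherited from $W_0\not\subseteq J_i$ together with $W\supseteq W_0$ (restricted to $W_i$).

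\textbf{Main obstacle.} The backward direction is routine, just a rewriting. The forward direction's one nontrivial point is the passage from ``$W_0\not\subseteq J_i$'' to ``$J_i\subseteq W$'': we must enlarge $W_0$ to include every element of each $J_i$ while remaining both inside $\bigcap_i W_i$ and inside $Supp_2$. The delicate issue is that an element of $J_k$ need not a priori lie in $W_i$ for $i\neq k$, so the closure under Theorem~3.7(3) is not immediately applicable across different rules; the argument must exploit the concrete structure of the characterizing $S4$-model used in the proof of Theorem~3.11 to ensure that every $\phi_j$ with $j\in J_i$ is already realized and therefore automatically lies in $W_i$ whenever needed.
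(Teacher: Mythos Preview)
Your backward direction is exactly what the paper does: it is a direct rewriting of the witness condition in Theorem~3.10(12), and the paper's entire proof is the single line ``By Theorem~3.10(12).''

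The problem is your forward direction. From Theorem~3.10(12) you correctly extract a $W_0\in Supp_2$ with $W_0\subseteq W_i$ and $W_0\cap I_i\neq\emptyset$, i.e.\ $W_0\not\subseteq J_i$. You then set out to \emph{enlarge} $W_0$ so that additionally $J_i\subseteq W_0$ for every $i$. This step cannot be carried out in general, and your sketch does not survive scrutiny. Concretely, take $m=2$, pick $\phi_b$ with $\Theta(\phi_b)=\Theta_\diamond(\phi_b)$ so that $\{\phi_b\}\in Supp_2$, and set $W_1=\{a,b\}$, $J_1=\{a\}$, $W_2=\{b,c\}$, $J_2=\{c\}$ with $a,c$ distinct from $b$. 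Then $W_0=\{b\}$ witnesses simultaneous inadmissibility via 3.10(12), so $S\neq\emptyset$; yet any $W$ with $J_1\subseteq W$ and $J_2\subseteq W$ must contain $\{a,c\}$, while $W\subseteq W_1\cap W_2=\{b\}$ forces $W=\{b\}$ --- a contradiction. So the literal inclusion $J_i\subseteq W$ simply fails here, and no amount of ``exploiting the characterizing model'' will produce it. Your closure-by-3.10(3) idea also breaks: adding an element of $J_k$ can push $W$ outside $W_i$ for $i\neq k$, exactly as in the example.

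What this shows is that the paper is reading ``$J_i\subsetneq W\subseteq W_i$'' as shorthand for the 3.10(12) condition $W\not\subseteq J_i$ and $W\subseteq W_i$ (equivalently, it is tacitly working after the reduction of Theorem~4.1(6) to a common premise $W=\bigcap_i W_i$, where the two formulations coincide). Once you accept that reading, the forward direction is the same one-line translation as the backward one, and there is nothing further to prove. Your ``main obstacle'' is an artifact of taking the $\subsetneq$ too literally; drop the enlargement argument and the proof collapses to the paper's citation.
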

\begin{proof}
By Theorem $3.10(12)$.
\end{proof}
\begin{Rem}
In the items $(9)-(13)$ in the Theorem $4.1$, by Theorem $4.2$,  we can replace $\subsetneq$ with $\subseteq$  if for some $W\in Supp_2$ we suppose $W_1\varsubsetneq W\subseteq W_2$ in $(9)$, $W_i\subsetneq W\subseteq W'_i$ for $i=1,...,m$ in $(10)$, $J_i\subsetneq W\subseteq J'_i$ for $i=1,...,m$ in $(11)$, $J_i\subsetneq W\subseteq W_i$ for $i=1,...,m$ in $(12)$. $J_i\varsubsetneq W \subseteq J_i\cup J$ for $i=1,...,m$ in $(13)$.
\end{Rem}

\begin{Rem}Theorem $4.1$ suggests a method to decompose $S(A)$ to its components as in $(7)$ and $(8)$. If for some $\frac{W}{J}\in A$, $i\in W-J$ and $W-J$ has another element, we use the part $(13)$ for $J=\{i\}$ and denote this action by
 $\stackrel{+i}{=}$. If $i\in W\cap J$ for some $\frac{W}{J}\in A$ and the previous action can't be done, we use $(12)$ for $J=\{i\}$ and denote this action by $\stackrel{-i}{=}$.
After each of these actions we use $(4)$ and $(5)$ and $(6)$ to simplify or unify the premisses or demands and to remove unnecessary rules. We denote this action by $\stackrel{s}{=}$. When the premisses are the same the action $\stackrel{-i}{=}$ is simplified to
\begin{center}
$S(\{\frac{W}{J_1},...,\frac{W}{J_m}\})=S(\{\frac{W-\{i\}}{J_1-\{i\}},...,\frac{W-\{i\}}{J_m-\{i\}}\})\cup S(\{\frac{W}{J_1},...,\frac{W}{J_m},\frac{W}{W-\{i\}}\})$.
\end{center}

If $i\in J'$ for each $\frac{W}{J'}\in A$ and $i\in W-J$ then the action $\stackrel{+i}{=}$ is simplified to

\begin{center}
$S(\{\frac{W}{J}\}\cup A)=S(\{\frac{W}{J\cup\{i\}}\}\cup A)\cup S(\{\frac{J\cup\{i\}}{J}\}\cup A)$.
\end{center}

By Theorem $4.2$ in each step the empty or nonempty sets are determined.

Figure $4$ shows the tree of applications of actions $\pm i$ for an example from Figure $2$. Figure $5$ shows the simplified forms of the rules in figure $4$. Since the tree for $3$ variables is very big, Figure $6$ shows a branch of application $\pm i$ for an example from figure $3$.
\end{Rem}
\begin{figure}
  \centering
    \includegraphics[scale=.6]{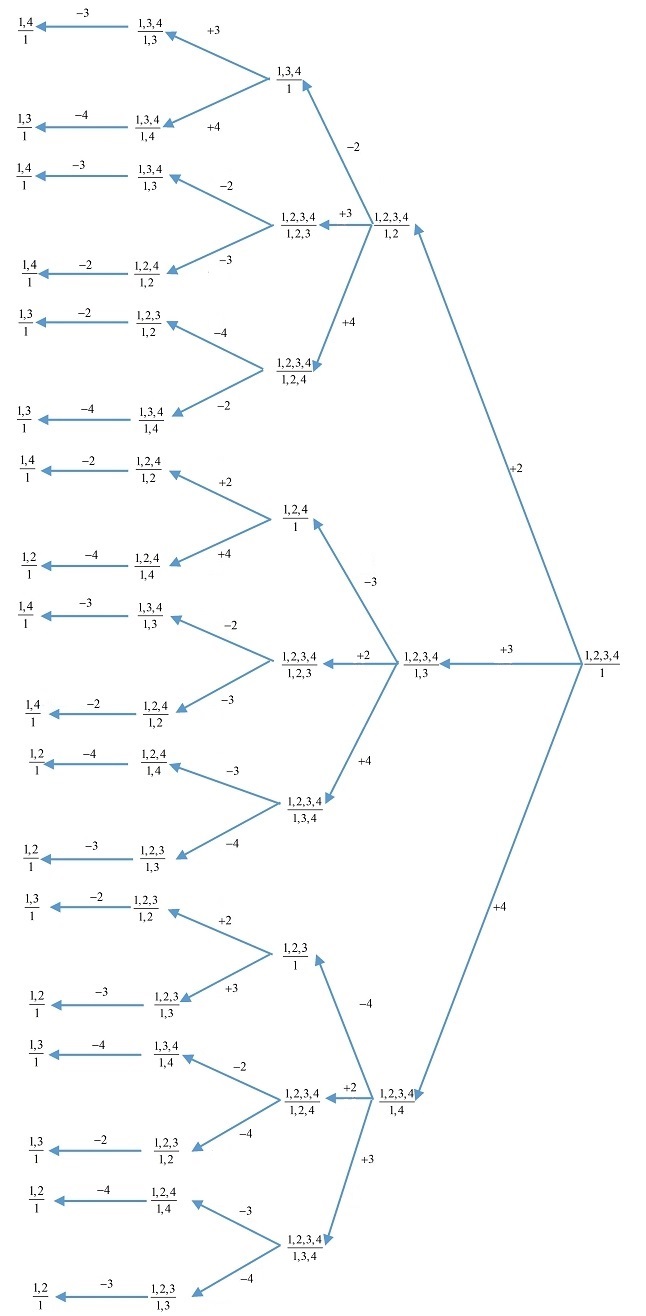}
  \caption{\label{fig:sad}}
\end{figure}
\begin{figure}
  \centering
    \includegraphics[scale=.4]{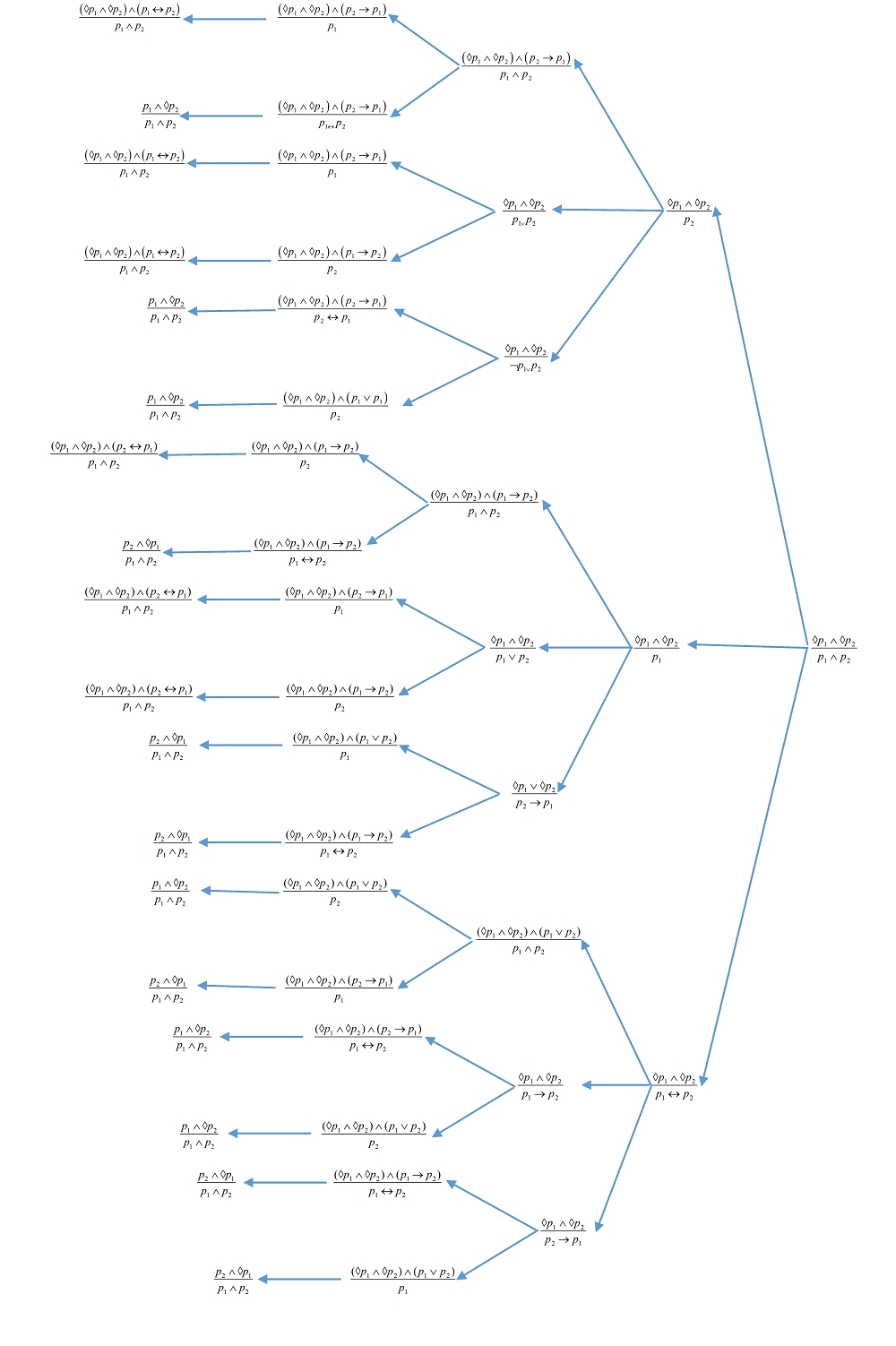}
    \caption{\label{fig:Doc2}}
\end{figure}
\begin{figure}
  \centering
    \includegraphics[scale=.6]{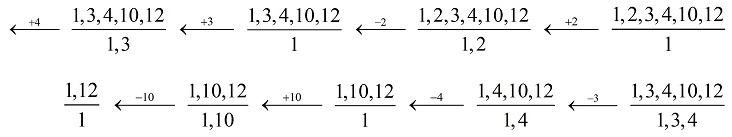}
  \caption{\label{fig:ab}}
\end{figure}
\newpage
\begin{Ex}
{\tiny
$S(\frac{1,2,3,4,10,12}{1})\stackrel{+2}{=} S(\frac{1,2,3,4,10,12}{1,2})\cup S(\frac{1,2}{1})$\\

$\stackrel{-2}{=}S(\frac{1,3,4,10,12}{1})\cup S(\frac{1,2,3,4,10,12}{1,2},\frac{1,2,3,4,10,12}{1,3,4,10,12})\cup S(\frac{1,2}{1})$\\

$\stackrel{+3}{=}S(\frac{1,3,4,10,12}{1,3})\cup S(\frac{1,3}{1})\cup S(\frac{1,2,3,4,10,12}{1,2,3},\frac{1,2,3,4,10,12}{1,3,4,10,12})\cup S(\frac{1,2,3}{1,2},\frac{1,2,3,4,10,12}{1,3,4,10,12})\cup S(\frac{1,2}{1})$\\

$\stackrel{s}{=}S(\frac{1,3,4,10,12}{1,3})\cup S(\frac{1,3}{1})\cup S(\frac{1,2,3,4,10,12}{1,2,3},\frac{1,2,3,4,10,12}{1,3,4,10,12})\cup S(\frac{1,2,3}{1,2},\frac{1,2,3}{1,3})\cup S(\frac{1,2}{1})$\\

$\stackrel{+4}{=}S(\frac{1,3,4,10,12}{1,3,4})\cup S(\frac{1,3,4}{1,3})\cup S(\frac{1,3}{1})\cup S(\frac{1,2,3,4,10,12}{1,2,3,4},\frac{1,2,3,4,10,12}{1,3,4,10,12})\cup S(\frac{1,2,3,4}{1,2,3},\frac{1,2,3,4,10,12}{1,3,4,10,12})\cup S(\frac{1,2,3}{1,2},\frac{1,2,3}{1,3})\cup S(\frac{1,2}{1})$\\

$\stackrel{s}{=}S(\frac{1,3,4,10,12}{1,3,4})\cup S(\frac{1,3,4}{1,3})\cup S(\frac{1,3}{1})\cup S(\frac{1,2,3,4,10,12}{1,2,3,4},\frac{1,2,3,4,10,12}{1,3,4,10,12})\cup S(\frac{1,2,3,4}{1,2,3},\frac{1,2,3,4}{1,3,4})\cup S(\frac{1,2,3}{1,2},\frac{1,2,3}{1,3})\cup S(\frac{1,2}{1})$

$\stackrel{-3}{=}S(\frac{1,4,10,12}{1,4})\cup S(\frac{1,3,4,10,12}{1,3,4},\frac{1,3,4,10,12}{1,4,10,12})\cup S(\frac{1,4}{1})\cup S(\frac{1,3,4}{1,3},\frac{1,3,4}{1,4})\cup S(\frac{1,3}{1})\cup S(\frac{1,2,4,10,12}{1,2,4},\frac{1,2,4,10,12}{1,4,10,12})\cup \\
S(\frac{1,2,3,4,10,12}{1,2,3,4},\frac{1,2,3,4,10,12}{1,3,4,10,12},\frac{1,2,3,4,10,12}{1,2,4,10,12})\cup S(\frac{1,2,4}{1,2},\frac{1,2,4}{1,4})\cup S(\frac{1,2,3,4}{1,2,3},\frac{1,2,3,4}{1,3,4},\frac{1,2,3,4}{1,2,4})\cup S(\frac{1,2}{1,2},\frac{1,2}{1})\cup S(\frac{1,2,3}{1,2},\frac{1,2,3}{1,3},\frac{1,2,3}{1,2})\cup S(\frac{1,2}{1})$
\vspace{1cm}

$\stackrel{s}{=}S(\frac{1,4,10,12}{1,4})\cup S(\frac{1,3,4,10,12}{1,3,4},\frac{1,3,4,10,12}{1,4,10,12})\cup S(\frac{1,4}{1})\cup S(\frac{1,3,4}{1,3},\frac{1,3,4}{1,4})\cup S(\frac{1,3}{1})\cup S(\frac{1,2,4,10,12}{1,2,4},\frac{1,2,4,10,12}{1,4,10,12})\cup \\ S(\frac{1,2,3,4,10,12}{1,2,3,4},\frac{1,2,3,4,10,12}{1,3,4,10,12},\frac{1,2,3,4,10,12}{1,2,4,10,12})\cup S(\frac{1,2,4}{1,2},\frac{1,2,4}{1,4})\cup S(\frac{1,2,3,4}{1,2,3},\frac{1,2,3,4}{1,3,4},\frac{1,2,3,4}{1,2,4})\cup S(\frac{1,2,3}{1,2},\frac{1,2,3}{1,3})\cup S(\frac{1,2}{1})$
\vspace{1cm}

$\stackrel{-4}{=}S(\frac{1,10,12}{1})\cup S(\frac{1,4,10,12}{1,4},\frac{1,4,10,12}{1,10,12})\cup S(\frac{1,3,10,12}{1,3},\frac{1,3,10,12}{1,10,12})\cup S(\frac{1,3,4,10,12}{1,3,4},\frac{1,3,4,10,12}{1,4,10,12},\frac{1,3,4,10,12}{1,3,10,12})\cup S(\frac{1,4}{1})\cup S(\frac{1,3}{1,3},\frac{1,3}{1})\cup S(\frac{1,3,4}{1,3},\frac{1,3,4}{1,4})\cup S(\frac{1,3}{1})\cup S(\frac{1,2,10,12}{1,2},\frac{1,2,10,12}{1,10,12})\cup S(\frac{1,2,4,10,12}{1,2,4},\frac{1,2,4,10,12}{1,4,10,12},\frac{1,2,4,10,12}{1,2,10,12})\cup S(\frac{1,2,3,10,12}{1,2,3},\frac{1,2,3,10,12}{1,3,10,12},\frac{1,2,3,10,12}{1,2,10,12})\cup S(\frac{1,2,3,4,10,12}{1,2,3,4},\frac{1,2,3,4,10,12}{1,3,4,10,12},\frac{1,2,3,4,10,12}{1,2,4,10,12},\frac{1,2,3,4,10,12}{1,2,3,10,12})\cup S(\frac{1,2}{1,2},\frac{1,2}{1})\cup S(\frac{1,2,4}{1,2},\frac{1,2,4}{1,4},\frac{1,2,4}{1,2})\cup S(\frac{1,2,3}{1,2,3},\frac{1,2,3}{1,3},\frac{1,2,3}{1,2})\cup\\ S(\frac{1,2,3,4}{1,2,3},\frac{1,2,3,4}{1,3,4},\frac{1,2,3,4}{1,2,4},\frac{1,2,3,4}{1,2,3})\cup S(\frac{1,2,3}{1,2},\frac{1,2,3}{1,3}) \cup S(\frac{1,2}{1})$\\

$\stackrel{s}{=}S(\frac{1,10,12}{1})\cup S(\frac{1,4,10,12}{1,4},\frac{1,4,10,12}{1,10,12})\cup S(\frac{1,2,3,10,12}{1,3},\frac{1,2,3,10,12}{1,10,12})\cup S(\frac{1,2,3,4,10,12}{1,3,4},\frac{1,2,3,4,10,12}{1,4,10,12},\frac{1,2,3,4,10,12}{1,2,3,10,12})\cup S(\frac{1,4}{1})\cup\\ S(\frac{1,3,4}{1,3},\frac{1,3,4}{1,4})\cup S(\frac{1,3}{1})\cup S(\frac{1,2,10,12}{1,2},\frac{1,2,10,12}{1,10,12})\cup S(\frac{1,2,4,10,12}{1,2,4},\frac{1,2,4,10,12}{1,4,10,12},\frac{1,2,4,10,12}{1,2,10,12})\cup S(\frac{1,2,3,10,12}{1,2,3},\frac{1,2,3,10,12}{1,3,10,12},\frac{1,2,3,10,12}{1,2,10,12})\cup S(\frac{1,2,3,4,10,12}{1,2,3,4},\frac{1,2,3,4,10,12}{1,3,4,10,12},\frac{1,2,3,4,10,12}{1,2,4,10,12},\frac{1,2,3,4,10,12}{1,2,3,10,12})\cup S(\frac{1,2,4}{1,2},\frac{1,2,4}{1,4})\cup S(\frac{1,2,3,4}{1,2,3},\frac{1,2,3,4}{1,3,4},\frac{1,2,3,4}{1,2,4})\cup S(\frac{1,2,3}{1,2},\frac{1,2,3}{1,3}) \cup S(\frac{1,2}{1})$\\

$\stackrel{+10}{=}S(\frac{1,10,12}{1,10})\cup  S(\frac{1,10}{1})\cup S(\frac{1,4,10,12}{1,4,10},\frac{1,4,10,12}{1,10,12})\cup S(\frac{1,4,10}{1,4},\frac{1,4,10,12}{1,10,12})\cup S(\frac{1,2,3,10,12}{1,3,10},\frac{1,2,3,10,12}{1,10,12})\cup S(\frac{1,3,10}{1,3},\frac{1,2,3,10,12}{1,10,12})\cup S(\frac{1,2,3,4,10,12}{1,3,4,10},\frac{1,2,3,4,10,12}{1,4,10,12},\frac{1,2,3,4,10,12}{1,2,3,10,12})\cup S(\frac{1,3,4,10}{1,3,4},\frac{1,2,3,4,10,12}{1,4,10,12},\frac{1,2,3,4,10,12}{1,2,3,10,12})\cup
S(\frac{1,4}{1})\cup S(\frac{1,3,4}{1,3},\frac{1,3,4}{1,4})\cup S(\frac{1,3}{1})\cup\\
 S(\frac{1,2,10,12}{1,2,10},\frac{1,2,10,12}{1,10,12})\cup S(\frac{1,2,10}{1,2},\frac{1,2,10,12}{1,10,12})\cup S(\frac{1,2,4,10,12}{1,2,4,10},\frac{1,2,4,10,12}{1,4,10,12},\frac{1,2,4,10,12}{1,2,10,12})\cup S(\frac{1,2,4,10}{1,2,4},\frac{1,2,4,10,12}{1,4,10,12},\frac{1,2,4,10,12}{1,2,10,12})\cup \\ S(\frac{1,2,3,10,12}{1,2,3,10},\frac{1,2,3,10,12}{1,3,10,12},\frac{1,2,3,10,12}{1,2,10,12})\cup S(\frac{1,2,3,10}{1,2,3},\frac{1,2,3,10,12}{1,3,10,12},\frac{1,2,3,10,12}{1,2,10,12})\cup S(\frac{1,2,3,4,10,12}{1,2,3,4,10},\frac{1,2,3,4,10,12}{1,3,4,10,12},\frac{1,2,3,4,10,12}{1,2,4,10,12},\frac{1,2,3,4,10,12}{1,2,3,10,12})\cup$
$S(\frac{1,2,3,4,10}{1,2,3,4},\frac{1,2,3,4,10,12}{1,3,4,10,12},\frac{1,2,3,4,10,12}{1,2,4,10,12},\frac{1,2,3,4,10,12}{1,2,3,10,12})\cup
S(\frac{1,2,4}{1,2},\frac{1,2,4}{1,4})\cup S(\frac{1,2,3,4}{1,2,3},\frac{1,2,3,4}{1,3,4},\frac{1,2,3,4}{1,2,4})\cup S(\frac{1,2,3}{1,2}) \cup S(\frac{1,2}{1})$\\

$\stackrel{s}{=}S(\frac{1,10,12}{1,10})\cup S(\frac{1,10}{1})\cup S(\frac{1,4,10,12}{1,4,10},\frac{1,4,10,12}{1,10,12})\cup S(\frac{1,4,10}{1,4},\frac{1,4,10}{1,10})\cup S(\frac{1,2,3,10,12}{1,3,10},\frac{1,2,3,10,12}{1,10,12})\cup S(\frac{1,3,10}{1,3},\frac{1,3,10}{1,10})\cup\\
 S(\frac{1,2,3,4,10,12}{1,3,4,10},\frac{1,2,3,4,10,12}{1,4,10,12},\frac{1,2,3,4,10,12}{1,2,3,10,12})
\cup S(\frac{1,3,4,10}{1,3,4},\frac{1,3,4,10}{1,4,10},\frac{1,3,4,10}{1,3,10})\cup S(\frac{1,4}{1})
\cup S(\frac{1,3,4}{1,3},\frac{1,3,4}{1,4})\cup S(\frac{1,3}{1})\cup S(\frac{1,2,10,12}{1,2,10},\frac{1,2,10,12}{1,10,12})
\cup S(\frac{1,2,10}{1,2},\frac{1,2,10}{1,10})\cup S(\frac{1,2,4,10,12}{1,2,4,10},\frac{1,2,4,10,12}{1,4,10,12},\frac{1,2,4,10,12}{1,2,10,12})
\cup S(\frac{1,2,4,10}{1,2,4},\frac{1,2,4,10}{1,4,10},\frac{1,2,4,10}{1,2,10})\cup S(\frac{1,2,3,10,12}{1,2,3,10},\frac{1,2,3,10,12}{1,3,10,12},\frac{1,2,3,10,12}{1,2,10,12})\cup \\ S(\frac{1,2,3,10}{1,2,3},\frac{1,2,3,10}{1,3,10},\frac{1,2,3,10}{1,2,10})\cup
S(\frac{1,2,3,4,10,12}{1,2,3,4,10},\frac{1,2,3,4,10,12}{1,3,4,10,12},\frac{1,2,3,4,10,12}{1,2,4,10,12},\frac{1,2,3,4,10,12}{1,2,3,10,12})\cup
S(\frac{1,2,3,4,10}{1,2,3,4},\frac{1,2,3,4,10}{1,3,4,10},\frac{1,2,3,4,10}{1,2,4,10},\frac{1,2,3,4,10}{1,2,3,10})\cup
S(\frac{1,2,4}{1,2},\frac{1,2,4}{1,4})\cup S(\frac{1,2,3,4}{1,2,3},\frac{1,2,3,4}{1,3,4},\frac{1,2,3,4}{1,2,4})\cup S(\frac{1,2,3}{1,2}) \cup S(\frac{1,2}{1})$\\

$\stackrel{-10}{=}S(\frac{1,12}{1})\cup S(\frac{1,10,12}{1,10},\frac{1,10,12}{1,12})\cup S(\frac{1,10}{1})\cup S(\frac{1,4,12}{1,4},\frac{1,4,12}{1,12})\cup S(\frac{1,4,10,12}{1,4,10},\frac{1,4,10,12}{1,10,12},\frac{1,4,10,12}{1,4,12})\cup S(\frac{1,4,10}{1,4},\frac{1,4,10}{1,10})\cup \\
S(\frac{1,2,3,12}{1,3},\frac{1,2,3,12}{1,12})\cup S(\frac{1,2,3,10,12}{1,3,10},\frac{1,2,3,10,12}{1,10,12},\frac{1,2,3,10,12}{1,2,3,12})\cup S(\frac{1,3,10}{1,3},\frac{1,3,10}{1,10})\cup S(\frac{1,2,3,4,12}{1,3,4},\frac{1,2,3,4,12}{1,4,12},\frac{1,2,3,4,12}{1,2,3,12})\cup\\ S(\frac{1,2,3,4,10,12}{1,3,4,10},\frac{1,2,3,4,10,12}{1,4,10,12},\frac{1,2,3,4,10,12}{1,2,3,10,12},\frac{1,2,3,4,10,12}{1,2,3,4,12})\cup S(\frac{1,3,4,10}{1,3,4},\frac{1,3,4,10}{1,4,10},\frac{1,3,4,10}{1,3,10})\cup S(\frac{1,4}{1})\cup S(\frac{1,3,4}{1,3},\frac{1,3,4}{1,4})\cup S(\frac{1,3}{1})\cup \\
S(\frac{1,2,12}{1,2},\frac{1,2,12}{1,12})\cup S(\frac{1,2,10,12}{1,2,10},\frac{1,2,10,12}{1,10,12},\frac{1,2,10,12}{1,2,12})\cup S(\frac{1,2,10}{1,2},\frac{1,2,10}{1,10})
\cup S(\frac{1,2,4,12}{1,2,4},\frac{1,2,4,12}{1,4,12},\frac{1,2,4,12}{1,2,12})\cup \\ S(\frac{1,2,4,10,12}{1,2,4,10},\frac{1,2,4,10,12}{1,4,10,12},\frac{1,2,4,10,12}{1,2,10,12},\frac{1,2,4,10,12}{1,2,4,12})\cup S(\frac{1,2,4,10}{1,2,4},\frac{1,2,4,10}{1,4,10},\frac{1,2,4,10}{1,2,10})\cup S(\frac{1,2,3,12}{1,2,3},\frac{1,2,3,12}{1,3,12},\frac{1,2,3,12}{1,2,12})\cup\\ S(\frac{1,2,3,10,12}{1,2,3,10},\frac{1,2,3,10,12}{1,3,10,12},\frac{1,2,3,10,12}{1,2,10,12},\frac{1,2,3,10,12}{1,2,3,12})\cup S(\frac{1,2,3,10}{1,2,3},\frac{1,2,3,10}{1,3,10},\frac{1,2,3,10}{1,2,10})\cup S(\frac{1,2,3,4,12}{1,2,3,4},\frac{1,2,3,4,12}{1,3,4,12},\frac{1,2,3,4,12}{1,2,4,12},\frac{1,2,3,4,12}{1,2,3,12})\cup\\ S(\frac{1,2,3,4,10,12}{1,2,3,4,10},\frac{1,2,3,4,10,12}{1,3,4,10,12},\frac{1,2,3,4,10,12}{1,2,4,10,12},\frac{1,2,3,4,10,12}{1,2,3,10,12},
\frac{1,2,3,4,10,12}{1,2,3,4,12})\cup
S(\frac{1,2,3,4,10}{1,2,3,4},\frac{1,2,3,4,10}{1,3,4,10},\frac{1,2,3,4,10}{1,2,4,10},\frac{1,2,3,4,10}{1,2,3,10})\cup
S(\frac{1,2,4}{1,2},\frac{1,2,4}{1,4})\cup S(\frac{1,2,3,4}{1,2,3},\frac{1,2,3,4}{1,3,4},\frac{1,2,3,4}{1,2,4})\cup S(\frac{1,2,3}{1,2})\cup S(\frac{1,2}{1})$\\

$\stackrel{s}{=}S(\frac{1,10,12}{1,10})\cup S(\frac{1,10}{1})\cup S(\frac{1,4,10,12}{1,4,10},\frac{1,4,10,12}{1,10,12})\cup S(\frac{1,4,10}{1,4},\frac{1,4,10}{1,10})\cup S(\frac{1,2,3}{1,3})\cup S(\frac{1,2,3,10,12}{1,2,3,10},\frac{1,2,3,10,12}{1,2,10,12})\cup S(\frac{1,2,3,10}{1,3,10},\frac{1,2,3,10}{1,2,10},\frac{1,2,3,10}{1,2,3}) \cup S(\frac{1,2,10,12}{1,2,10},\frac{1,2,10,12}{1,10,12})\cup S(\frac{1,2,10}{1,10},\frac{1,2,10}{1,2})\cup S(\frac{1,3,10}{1,3},\frac{1,3,10}{1,10})\cup S(\frac{1,2,3,4}{1,3,4},\frac{1,2,3,4}{1,2,3})\cup $ $S(\frac{1,2,3,4,10,12}{1,2,3,4,10},\frac{1,2,3,4,10,12}{1,2,4,10,12},\frac{1,2,3,4,10,12}{1,2,3,10,12})\cup\\
 S(\frac{1,2,3,4,10}{1,3,4,10},\frac{1,2,3,4,10}{1,2,4,10},\frac{1,2,3,4,10}{1,2,3,10},\frac{1,2,3,4,10}{1,2,3,4})\cup S(\frac{1,2,4,10,12}{1,2,4,10},\frac{1,2,4,10,12}{1,4,10,12},\frac{1,2,4,10,12}{1,2,10,12})\cup S(\frac{1,2,4,10}{1,4,10},\frac{1,2,4,10}{1,2,10},\frac{1,2,4,10}{1,2,4})\cup \\ S(\frac{1,3,4,10}{1,3,4},\frac{1,3,4,10}{1,4,10},\frac{1,3,4,10}{1,3,10})\cup S(\frac{1,4}{1})\cup S(\frac{1,3,4}{1,3},\frac{1,3,4}{1,4})\cup S(\frac{1,3}{1})\cup S(\frac{1,2,10,12}{1,2,10},\frac{1,2,10,12}{1,10,12})\cup S(\frac{1,2,10}{1,2},\frac{1,2,10}{1,10})\cup\\
 S(\frac{1,2,4,10,12}{1,2,4,10},\frac{1,2,4,10,12}{1,4,10,12},\frac{1,2,4,10,12}{1,2,10,12})\cup S(\frac{1,2,4,10}{1,2,4},\frac{1,2,4,10}{1,4,10},\frac{1,2,4,10}{1,2,10})\cup S(\frac{1,2,3,10,12}{1,2,3,10},\frac{1,2,3,10,12}{1,3,10,12},\frac{1,2,3,10,12}{1,2,10,12})\cup S(\frac{1,2,3,10}{1,2,3},\frac{1,2,3,10}{1,3,10},\frac{1,2,3,10}{1,2,10})\cup\\ S(\frac{1,2,3,4,10,12}{1,2,3,4,10},\frac{1,2,3,4,10,12}{1,3,4,10,12},\frac{1,2,3,4,10,12}{1,2,4,10,12},\frac{1,2,3,4,10,12}{1,2,3,10,12})\cup
S(\frac{1,2,3,4,10}{1,2,3,4},\frac{1,2,3,4,10}{1,3,4,10},\frac{1,2,3,4,10}{1,2,4,10},\frac{1,2,3,4,10}{1,2,3,10})\cup
S(\frac{1,2,4}{1,2},\frac{1,2,4}{1,4})\cup\\
S(\frac{1,2,3,4}{1,2,3},\frac{1,2,3,4}{1,3,4},\frac{1,2,3,4}{1,2,4})\cup S(\frac{1,2,3}{1,2}) \cup S(\frac{1,2}{1})$}\\

 In the last equation note $\frac{1,12}{1},\frac{1,4,12}{1,4},\frac{1,2,3,12}{1,2,3},\frac{1,2,3,4,12}{1,2,3,4}$ are admissible, otherwise by Theorem $3.6$ and Figure $(2)$ there is $W\subseteq\{1,3,4,12\}$ such that $12\in W$ and ${\mathcal M}(\Theta_2,W), \phi_{12}\vDash \phi_{12}$ then ${\mathcal M}(\Theta_2,W), \phi_{12}\vDash \diamond p_2$ which is false. Then by Theorem $4.1$, parts $(4)$ and $(5)$  \\

 $S(\frac{1,12}{1})=\varnothing$\\

$S(\frac{1,4,12}{1,4},\frac{1,4,12}{1,12})=S(\frac{1,4,12}{1,4},\frac{1,4,12}{1})=S(\frac{1,4,12}{1,4})=\varnothing$\\

$S(\frac{1,2,12}{1,2},\frac{1,2,12}{1,12})=S(\frac{1,2,12}{1,2},\frac{1,2,12}{1})=S(\frac{1,2,12}{1,2})=\varnothing$\\

$S(\frac{1,2,4,12}{1,2,4},\frac{1,2,4,12}{1,4,12},\frac{1,2,4,12}{1,2,12})=S(\frac{1,2,4,12}{1,2,4},\frac{1,2,4,12}{1,4},\frac{1,2,4,12}{1,2})=
S(\frac{1,2,4,12}{1,2,4})=\varnothing$\\

$S(\frac{1,2,3,12}{1,2,3},\frac{1,2,3,12}{1,3,12},\frac{1,2,3,12}{1,2,12})=S(\frac{1,2,3,12}{1,2,3},\frac{1,2,3,12}{1,3},\frac{1,2,3,12}{1,2})=
S(\frac{1,2,3,12}{1,2,3})=\varnothing$
\vspace{1cm}

$S(\frac{1,2,3,4,12}{1,2,3,4},\frac{1,2,3,4,12}{1,3,4,12},\frac{1,2,3,4,12}{1,2,4,12},\frac{1,2,3,4,12}{1,2,3,12})=
S(\frac{1,2,3,4,12}{1,2,3,4},\frac{1,2,3,4,12}{1,3,4},\frac{1,2,3,4,12}{1,2,4},\frac{1,2,3,4,12}{1,2,3})=S(\frac{1,2,3,4,12}{1,2,3,4})=\emptyset$
\vspace{1cm}

$S(\frac{1,10,12}{1,10},\frac{1,10,12}{1,12})=S(\frac{1,10,12}{1,10},\frac{1,10,12}{1})=S(\frac{1,10,12}{1,10})$,
\vspace{1cm}

$S(\frac{1,4,10,12}{1,4,10},\frac{1,4,10,12}{1,10,12},\frac{1,4,10,12}{1,4,12})=
S(\frac{1,4,10,12}{1,4,10},\frac{1,4,10,12}{1,10,12},\frac{1,4,10,12}{1,4})=S(\frac{1,4,10,12}{1,4,10},\frac{1,4,10,12}{1,10,12}),$
\vspace{1cm}

$S(\frac{1,2,3,12}{1,3},\frac{1,2,3,12}{1,12})=S(\frac{1,2,3}{1,3},\frac{1,2,3}{1})=S(\frac{1,2,3}{1,3}),$
\vspace{1cm}

$S(\frac{1,2,3,10,12}{1,3,10},\frac{1,2,3,10,12}{1,10,12},\frac{1,2,3,10,12}{1,2,3,12})\stackrel{+2}{=}
S(\frac{1,2,3,10,12}{1,2,3,10},\frac{1,2,3,10,12}{1,2,10,12},\frac{1,2,3,10,12}{1,2,3,12})\cup S(\frac{1,2,3,10}{1,3,10},\frac{1,2,3,10,12}{1,2,10,12},\frac{1,2,3,10,12}{1,2,3,12}) \cup S(\frac{1,2,3,10,12}{1,2,3,10},\frac{1,2,10,12}{1,10,12},\frac{1,2,3,10,12}{1,2,3,12})\cup S(\frac{1,2,3,10}{1,3,10},\frac{1,2,10,12}{1,10,12},\frac{1,2,3,10,12}{1,2,3,12})=
S(\frac{1,2,3,10,12}{1,2,3,10},\frac{1,2,3,10,12}{1,2,10,12},\frac{1,2,3,10,12}{1,2,3})\cup S(\frac{1,2,3,10}{1,3,10},\frac{1,2,3,10}{1,2,10},\frac{1,2,3,10}{1,2,3}) \cup S(\frac{1,2,10,12}{1,2,10},\frac{1,2,10,12}{1,10,12},\frac{1,2,10,12}{1,2})\cup S(\frac{1,2,10}{1,10},\frac{1,2,10}{1,10},\frac{1,2,10}{1,2})=S(\frac{1,2,3,10,12}{1,2,3,10},\frac{1,2,3,10,12}{1,2,10,12})\cup S(\frac{1,2,3,10}{1,3,10},\frac{1,2,3,10}{1,2,10},\frac{1,2,3,10}{1,2,3}) \cup S(\frac{1,2,10,12}{1,2,10},\frac{1,2,10,12}{1,10,12})\cup S(\frac{1,2,10}{1,10},\frac{1,2,10}{1,2})$
\vspace{1cm}

$S(\frac{1,2,3,4,12}{1,3,4},\frac{1,2,3,4,12}{1,4,12},\frac{1,2,3,4,12}{1,2,3,12})=S(\frac{1,2,3,4}{1,3,4},\frac{1,2,3,4}{1,4},\frac{1,2,3,4}{1,2,3})=
S(\frac{1,2,3,4}{1,3,4},\frac{1,2,3,4}{1,2,3})$,
\vspace{1cm}

 $S(\frac{1,2,10,12}{1,2,10},\frac{1,2,10,12}{1,10,12},
\frac{1,2,10,12}{1,2,12})=S(\frac{1,2,10,12}{1,2,10},\frac{1,2,10,12}{1,10,12},
\frac{1,2,10,12}{1,2})=S(\frac{1,2,10,12}{1,2,10},\frac{1,2,10,12}{1,10,12})$,
\vspace{1cm}

$S(\frac{1,2,4,10,12}{1,2,4,10},\frac{1,2,4,10,12}{1,4,10,12},\frac{1,2,4,10,12}{1,2,10,12},\frac{1,2,4,10,12}{1,2,4,12})=
S(\frac{1,2,4,10,12}{1,2,4,10},\frac{1,2,4,10,12}{1,4,10,12},\frac{1,2,4,10,12}{1,2,10,12},\frac{1,2,4,10,12}{1,2,4})=\\
S(\frac{1,2,4,10,12}{1,2,4,10},\frac{1,2,4,10,12}{1,4,10,12},\frac{1,2,4,10,12}{1,2,10,12})$,
\vspace{1cm}

$S(\frac{1,2,3,10,12}{1,2,3,10},\frac{1,2,3,10,12}{1,3,10,12},\frac{1,2,3,10,12}{1,2,10,12},\frac{1,2,3,10,12}{1,2,3,12})=
S(\frac{1,2,3,10,12}{1,2,3,10},\frac{1,2,3,10,12}{1,3,10,12},\frac{1,2,3,10,12}{1,2,10,12},\frac{1,2,3,10,12}{1,2,3})=\\
S(\frac{1,2,3,10,12}{1,2,3,10},
\frac{1,2,3,10,12}{1,3,10,12},\frac{1,2,3,10,12}{1,2,10,12})$,
\vspace{1cm}

$S(\frac{1,2,3,4,10,12}{1,2,3,4,10},\frac{1,2,3,4,10,12}{1,3,4,10,12},\frac{1,2,3,4,10,12}{1,2,4,10,12},\frac{1,2,3,4,10,12}{1,2,3,10,12},
\frac{1,2,3,4,10,12}{1,2,3,4,12})=\\
S(\frac{1,2,3,4,10,12}{1,2,3,4,10},\frac{1,2,3,4,10,12}{1,3,4,10,12},\frac{1,2,3,4,10,12}{1,2,4,10,12},\frac{1,2,3,4,10,12}{1,2,3,10,12},
\frac{1,2,3,4,10,12}{1,2,3,4})=\\
S(\frac{1,2,3,4,10,12}{1,2,3,4,10},\frac{1,2,3,4,10,12}{1,3,4,10,12},\frac{1,2,3,4,10,12}{1,2,4,10,12},
\frac{1,2,3,4,10,12}{1,2,3,10,12})$,
\vspace{1cm}

$S(\frac{1,2,3,4,10,12}{1,3,4,10},\frac{1,2,3,4,10,12}{1,4,10,12},\frac{1,2,3,4,10,12}{1,2,3,10,12},\frac{1,2,3,4,10,12}{1,2,3,4,12})\stackrel{+2}{=}
S(\frac{1,2,3,4,10,12}{1,2,3,4,10},\frac{1,2,3,4,10,12}{1,2,4,10,12},\frac{1,2,3,4,10,12}{1,2,3,10,12},\frac{1,2,3,4,10,12}{1,2,3,4,12})\cup S(\frac{1,2,3,4,10}{1,3,4,10},\frac{1,2,3,4,10,12}{1,2,4,10,12},\frac{1,2,3,4,10,12}{1,2,3,10,12},\frac{1,2,3,4,10,12}{1,2,3,4,12})\cup S(\frac{1,2,3,4,10,12}{1,2,3,4,10},\frac{1,2,4,10,12}{1,4,10,12},\frac{1,2,3,4,10,12}{1,2,3,10,12},\frac{1,2,3,4,10,12}{1,2,3,4,12})\cup\\
S(\frac{1,2,3,4,10}{1,3,4,10},\frac{1,2,4,10,12}{1,4,10,12},\frac{1,2,3,4,10,12}{1,2,3,10,12},\frac{1,2,3,4,10,12}{1,2,3,4,12})=
S(\frac{1,2,3,4,10,12}{1,2,3,4,10},\frac{1,2,3,4,10,12}{1,2,4,10,12},\frac{1,2,3,4,10,12}{1,2,3,10,12},\frac{1,2,3,4,10,12}{1,2,3,4})\cup\\
S(\frac{1,2,3,4,10}{1,3,4,10},\frac{1,2,3,4,10}{1,2,4,10},\frac{1,2,3,4,10}{1,2,3,10},\frac{1,2,3,4,10}{1,2,3,4})\cup S(\frac{1,2,4,10,12}{1,2,4,10},\frac{1,2,4,10,12}{1,4,10,12},\frac{1,2,4,10,12}{1,2,10,12},\frac{1,2,4,10,12}{1,2,4})\cup\\
S(\frac{1,2,4,10}{1,4,10},\frac{1,2,4,10}{1,4,10},\frac{1,2,4,10}{1,2,10},\frac{1,2,4,10}{1,2,4})=
S(\frac{1,2,3,4,10,12}{1,2,3,4,10},\frac{1,2,3,4,10,12}{1,2,4,10,12},\frac{1,2,3,4,10,12}{1,2,3,10,12})\cup
\\ S(\frac{1,2,3,4,10}{1,3,4,10},\frac{1,2,3,4,10}{1,2,4,10},\frac{1,2,3,4,10}{1,2,3,10},\frac{1,2,3,4,10}{1,2,3,4})\cup S(\frac{1,2,4,10,12}{1,2,4,10},\frac{1,2,4,10,12}{1,4,10,12},\frac{1,2,4,10,12}{1,2,10,12})\cup S(\frac{1,2,4,10}{1,4,10},\frac{1,2,4,10}{1,2,10},\frac{1,2,4,10}{1,2,4}). $
\end{Ex}
\section{Conclusion and Discussion}
 In this paper, we investigated some relations between the method in \cite{Sergey1}, \cite{18}, based on the reduced normal form rules in Theorem $3.5$, and sets of substitutions which reject them in section $4$. We also generalized the method for one rule to inadmissibility of a set of rules. We did some case studies for the cases $2$ and $3$ variables. The case studies show complexity of the problem. The decomposition of sets of substitutions rejecting sets of rules to its components algorithmically is done in section $4$ and calculating at least one member of each components is leaved. Partially the problem is solved for some cases in the paper.

\end{document}